\newtheorem{theorem}{Theorem}[section]
\newtheorem{lemma}[theorem]{Lemma}
\newtheorem{proposition}[theorem]{Proposition}
\theoremstyle{definition}
\newtheorem{definition}[theorem]{Definition}
\newtheorem{remark}[theorem]{Remark}
\newtheorem{example}[theorem]{Example}
\newtheorem*{ack}{Acknowledgements}
\DeclarePairedDelimiter{\ceil}{\lceil}{\rceil}
\DeclarePairedDelimiter{\floor}{\lfloor}{\rfloor}
\newcommand{\vphi}{{\varphi}}
\newcommand{\bbA}{{\mathbb A}}
\newcommand{\bbF}{{\mathbb F}}
\newcommand{\bbG}{{\mathbb G}}
\newcommand{\bbP}{{\mathbb P}}
\newcommand{\bbQ}{{\mathbb Q}}
\newcommand{\bbR}{{\mathbb R}}
\newcommand{\bbZ}{{\mathbb Z}}
\newcommand{\frp}{{\mathfrak p}}
\newcommand{\frq}{{\mathfrak q}}
\newcommand{\frr}{{\mathfrak r}}
\newcommand{\frs}{{\mathfrak s}}
\newcommand{\cA}{{\mathcal A}}
\newcommand{\cB}{{\mathcal B}}
\newcommand{\cC}{{\mathcal C}}
\newcommand{\cO}{{\mathcal O}}
\newcommand{\cT}{{\mathcal T}}
\newcommand{\et}{{\textnormal{\'et}}}
\DeclareMathOperator{\Gal}{Gal}
\DeclareMathOperator{\Br}{Br}
\DeclareMathOperator{\Aut}{Aut}
\DeclareMathOperator{\Hom}{Hom}
\DeclareMathOperator{\End}{End}
\DeclareMathOperator{\Kum}{Kum}
\DeclareMathOperator{\ord}{ord}
\DeclareMathOperator{\Res}{Res}
\DeclareMathOperator{\ev}{ev}
\author{Rachel Newton}
\address{Rachel Newton\\
Department of Mathematics\\
King's College London\\
Strand\\ 
London\\
WC2R 2LS\\
UK}
\email{rachel.newton@kcl.ac.uk}
\title[Brauer groups of products of CM elliptic curves]{Transcendental Brauer groups of products of CM elliptic curves}
\thanks{11G35 (primary), 11G05, 14K22, 11R37 (secondary)}
\begin{document}
\maketitle

\begin{abstract}
Let $L$ be a number field and let $E/L$ be an elliptic curve with complex multiplication by the ring of integers $\cO_K$ of an imaginary quadratic field $K$. We use class field theory and results of Skorobogatov and Zarhin to compute the transcendental part of the Brauer group of the abelian surface $E\times E$.
The results for the odd order torsion also apply to the Brauer group of the K3 surface $\Kum(E\times E)$. 
We describe explicitly the elliptic curves $E/\bbQ$ with complex multiplication by $\cO_K$ such that the Brauer group of $E\times E$ contains a transcendental element of odd order. We show that such an element gives rise to a Brauer-Manin obstruction to weak approximation on $\Kum(E\times E)$, while there is no obstruction coming from the algebraic part of the Brauer group.
\end{abstract}


\section{Introduction}

Let $X$ be a smooth, projective, geometrically irreducible variety over a number field $L$. In \cite{Manin}, Manin showed that the Brauer group of $X$ can obstruct the Hasse principle on $X$. Let $X(\bbA_L)$ denote the set of adelic points of $X$ and let $\Br(X)$ denote the Brauer group of $X$, $\Br(X)=H^2_{\mathrm{\acute{e}t}}(X,\bbG_m)$. There is a pairing 
$$X(\bbA_L)\times \Br(X)\rightarrow \bbQ/\bbZ$$
obtained by evaluating an element of $\Br(X)$ at an adelic point and summing the local invariants \cite{Manin}. 
The Brauer-Manin set $X(\bbA_L)^{\Br(X)}$ is the set of adelic points of $X$ which are orthogonal to $\Br(X)$ under this pairing. 
It contains the closure of the set of rational points in the adelic topology.
$$\overline{X(L)}\subset X(\bbA_L)^{\Br(X)}\subset X(\bbA_L).$$
If $X(\bbA_L)\neq\emptyset$ but $X(\bbA_L)^{\Br(X)}=\emptyset$, there is said to be a Brauer-Manin obstruction to the Hasse principle on $X$. If $X(\bbA_L)\neq X(\bbA_L)^{\Br(X)}$, there is said to be a Brauer-Manin obstruction to weak approximation on $X$.

Since Manin's observation, Brauer groups and the associated obstructions have been the subject of a great deal of research. Let $\overline{X}$ denote the base change of $X$ to an algebraic closure of $L$. The kernel of the natural map from $\Br(X)$ to $\Br(\overline{X})$ is called the `algebraic' part of $\Br(X)$ and denoted $\Br_1(X)$. 
It is usually easier to handle than the remaining `transcendental' part and a substantial portion of the literature is devoted to its study. 
The quotient group $\Br(X)/\Br_1(X)$, known as the transcendental part of $\Br(X)$, is generally more mysterious. Nevertheless, it has arithmetic importance -- transcendental elements in $\Br(X)$ can obstruct the Hasse principle and weak approximation, as shown by Harari in \cite{Harari} and Wittenberg in \cite{Wittenberg}. 

Results of Skorobogatov and Zarhin in \cite{SZtorsion} allow one to compute the transcendental part of the Brauer group for a product of elliptic curves. These results were used by Ieronymou, Skorobogatov and Zarhin in \cite{ISZ} to give a sufficient condition for the Brauer group of a diagonal quartic surface $D$ over $\bbQ$ to be algebraic, and to give an upper bound on the size of the quotient of $\Br(D)$  by the image of $\Br(\bbQ)$. This built upon earlier work of Ieronyomou \cite{Ieronymou}, who gave a sufficient condition for the $2$-primary part of the Brauer group of a diagonal quartic surface over $\bbQ$ to be algebraic. In \cite{I-S}, Ieronymou and Skorobogatov went on to use the results of \cite{SZtorsion} and \cite{ISZ} to compute the odd order torsion in the quotient of $\Br(D)$  by the image of $\Br(\bbQ)$ for a diagonal quartic surface  $D$ over $\bbQ$.

In this paper, we use the results of \cite{SZtorsion} to compute the transcendental part of the Brauer group for abelian surfaces of the form $E\times E$ where $E/L$ is an elliptic curve with complex multiplication by the ring of integers $\cO_K$ of an imaginary quadratic field $K$.

Skorobogatov and Zarhin \cite{SZ} proved that for $X$ an abelian variety or K3 surface, $\Br(X)/\Br_1(X)$ is a finite abelian group. Therefore, computing $\Br(X)/\Br_1(X)$ is equivalent to computing its $\ell$-primary part $(\Br(X)/\Br_1(X))_{\ell^\infty}$ for every prime number $\ell$. To a pair $(E,\ell)$ consisting of an elliptic curve $E$ defined over a number field $L$, with complex multiplication by $\cO_K$, and a prime number $\ell$, we associate an integer $m(\ell)$ (Definition \ref{def:m}) which can be calculated using class field theory (Proposition \ref{upper bound}). We write $\Gamma_L$ for the absolute Galois group of $L$. We denote the $n$-torsion subgroup of an abelian group $A$ by $A_n$. For an elliptic curve $E/L$, we write $E_n$ for the $n$-torsion points of $E$ defined over an algebraic closure of $L$.

\begin{theorem}
Let $\ell\in\bbZ_{>0}$ be an odd prime and let $m=m(\ell)$. Then
 \begin{eqnarray*}
 \left(\frac{\Br(E\times E)}{\Br_1(E\times E)}\right)_{\ell^{\infty}}= 
\frac{\Br(E\times E)_{\ell^{m}}}{\Br_1(E\times E)_{\ell^{m}}}=\frac{\End_{\Gamma_L} E_{\ell^m}}{(\cO_K\otimes\bbZ/\ell^m)^{\Gamma_L}}
 \cong \begin{cases} (\bbZ/\ell^m)^2 & \textrm{if } K\subset L\\
\bbZ/\ell^m & \textrm{if } K\not\subset L.
\end{cases}
  \end{eqnarray*}
\end{theorem}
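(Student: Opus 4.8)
The plan is to build on the Skorobogatov–Zarhin machinery from \cite{SZtorsion} relating the transcendental Brauer group of a product of elliptic curves to Galois-equivariant homomorphisms between Tate modules, and then to exploit the CM structure to make everything explicit. First I would recall that for an abelian variety $X$ over $L$ one has, after the work of Skorobogatov–Zarhin, a Galois- and Kummer-theoretic description of $\bigl(\Br(\overline X)^{\Gamma_L}\bigr)_{\ell^\infty}$ in terms of $\Hom(\bigwedge^2 T_\ell X, \bbZ_\ell(1))$ modulo the image of the Néron–Severi group, and that for $X = E\times E$ this $\Hom$ group gets identified, via the decomposition $T_\ell(E\times E) = T_\ell E \oplus T_\ell E$, with $\End(T_\ell E)$ with its natural $\Gamma_L$-action (the ``extra'' off-diagonal Hom-piece beyond $\NS$). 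Reducing mod $\ell^m$, the transcendental quotient at level $\ell^m$ becomes $\End_{\Gamma_L} E_{\ell^m}$ modulo the part coming from algebraic classes, which is exactly the image of $\cO_K\otimes\bbZ/\ell^m$ acting as scalars (since CM gives $\End(E)=\cO_K\hookrightarrow\End(E_{\ell^m})$, and these endomorphism-induced correspondences are algebraic, hence land in $\Br_1$). This yields the middle equality $\frac{\Br(E\times E)_{\ell^m}}{\Br_1(E\times E)_{\ell^m}} = \frac{\End_{\Gamma_L}E_{\ell^m}}{(\cO_K\otimes\bbZ/\ell^m)^{\Gamma_L}}$.

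Next I would address why the $\ell$-primary part is already captured at level $\ell^m$, i.e. the first equality. This is where the integer $m=m(\ell)$ of Definition \ref{def:m} enters: by construction (and Proposition \ref{upper bound}) $\ell^m$ is chosen so that the Galois action on $T_\ell E$ is, in the relevant sense, ``stable'' beyond level $m$ — concretely, the image of $\Gamma_L$ in $\Aut(T_\ell E)$ acting through its CM character factors through $(\cO_K\otimes\bbZ_\ell)^\times$, and the cokernel computation $\End_{\Gamma_L}(T_\ell E)/(\cO_K\otimes\bbZ_\ell)$ already stabilizes at level $\ell^m$. So I would show that the natural surjection $\bigl(\Br(E\times E)/\Br_1(E\times E)\bigr)_{\ell^\infty}\twoheadrightarrow \Br(E\times E)_{\ell^m}/\Br_1(E\times E)_{\ell^m}$ is also injective, using finiteness (Skorobogatov–Zarhin, \cite{SZ}) together with the stabilization of the Galois-fixed endomorphisms modulo scalars — i.e. no new fixed endomorphisms appear at higher level modulo $\cO_K$.

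For the final isomorphism I would compute $\End_{\Gamma_L}E_{\ell^m}$ directly. Identify $E_{\ell^m}\cong\cO_K/\ell^m$ as an $\cO_K/\ell^m$-module of rank one (using the CM structure), so $\End_{\cO_K}(E_{\ell^m})=\cO_K/\ell^m$ while $\End(E_{\ell^m})$ as an abelian group is the $2\times 2$ matrix-type ring $\End_{\bbZ/\ell^m}(\cO_K/\ell^m)$, which is free of rank $4$ over $\bbZ/\ell^m$. The Galois group acts through a character $\chi\colon\Gamma_L\to(\cO_K/\ell^m)^\times$ (the mod-$\ell^m$ CM character), acting on $\End(E_{\ell^m})$ by conjugation via the embedding $(\cO_K/\ell^m)^\times\hookrightarrow\Aut(E_{\ell^m})$. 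When $K\subset L$, the image lies in the ``diagonal'' torus and conjugation fixes precisely the $\cO_K\otimes\bbZ/\ell^m$-linear endomorphisms on each of the two eigenlines plus the two intertwiners — a rank-$2$ free module over $\bbZ/\ell^m$ — but a rank-$1$ piece is $(\cO_K\otimes\bbZ/\ell^m)$ itself (the algebraic scalars), leaving $(\bbZ/\ell^m)^2$ after the quotient; when $K\not\subset L$, complex conjugation swaps the two eigenlines, cutting the fixed module down by a further rank $1$, leaving $\bbZ/\ell^m$. The main obstacle I expect is precisely this last linear-algebra-over-$\bbZ/\ell^m$ computation done with genuine care about the non-semisimplicity at $\ell\mid\ell$ and about the possible inertia/ramification in $K$ — one must verify that the character image is large enough that the only $\Gamma_L$-fixed endomorphisms are the ``obvious'' ones, which is exactly what the choice of $m(\ell)$ via class field theory (Proposition \ref{upper bound}) guarantees; assembling that input correctly is the crux.
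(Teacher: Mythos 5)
Your strategy is the paper's own: invoke the Skorobogatov--Zarhin isomorphism $\Br(E\times E)_n/\Br_1(E\times E)_n\cong\End_{\Gamma_L}(E_n)/(\cO_K\otimes\bbZ/n)^{\Gamma_L}$ and then compute the right-hand side using the fact that $\Gamma_{KL}$ acts on $\End E_{\ell^k}$ by conjugation through the Gr\"ossencharacter; the detour through $\Hom(\bigwedge^2 T_\ell,\bbZ_\ell(1))$ and the N\'eron--Severi group is unnecessary but harmless. Two steps, however, are asserted rather than proved, and one of them is the crux. The equality $(\Br/\Br_1)_{\ell^\infty}=\Br_{\ell^m}/\Br_{1,\ell^m}$ needs an \emph{upper} bound: for every $k>m$, any $\Gamma_L$-fixed $\vphi\in\End E_{\ell^k}$ must satisfy $\ell^m\vphi\in\cO_K\otimes\bbZ/\ell^k$. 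This comes from the \emph{maximality} in Definition \ref{def:m}: there is a good prime $\frq$ with $\psi_{E/KL}(\frq)=a+b\alpha$ and $\ord_\ell(b)$ exactly $m$, conjugation by $a+b\alpha$ fixes $\vphi$ iff $b(\alpha\vphi-\vphi\alpha)=0$, and $(\End E_{\ell^k})^+=\cO_K\otimes\bbZ/\ell^k$ (the paper's Lemmas \ref{E+} and \ref{fixed}). Your appeal to Proposition \ref{upper bound} does not supply this -- that proposition only bounds $m(\ell)$ by $n(\ell)$; the largeness of the character image is built into the definition of $m(\ell)$ as a largest integer, and "no new fixed endomorphisms appear at higher level" is precisely the statement to be established, not a known stabilization.

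The second soft spot is the final count. As written it is internally inconsistent: for $K\subset L$ you describe the fixed submodule as rank $2$ over $\bbZ/\ell^m$ and $\cO_K\otimes\bbZ/\ell^m$ as a rank-$1$ piece, yet conclude $(\bbZ/\ell^m)^2$. The correct statement is that at level exactly $\ell^m$ the conjugation action is by elements of $\bbZ+\ell^m\cO_K$, hence central mod $\ell^m$, so \emph{all} of $\End E_{\ell^m}\cong M_2(\bbZ/\ell^m)$ (rank $4$) is fixed and the quotient by $\cO_K\otimes\bbZ/\ell^m$ (rank $2$) is $(\bbZ/\ell^m)^2$. Moreover the "two eigenlines" picture only exists when $\ell$ splits in $K$; for inert or ramified $\ell$ one must instead use the decomposition $\End E_{\ell^m}=(\cO_K\otimes\bbZ/\ell^m)\oplus(\cO_K\otimes\bbZ/\ell^m)\tau$ into $\cO_K$-linear and $\cO_K$-antilinear parts, and when $\ell\mid\Delta_K$ both $(\cO_K\otimes\bbZ/\ell^m)^{\Gamma_L}$ and the order of the image of $\tau$ require the $\floor{\ord_\ell(\Delta_K)/2}$-type bookkeeping of the paper's Lemma \ref{tau fixed}. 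These defects are repairable within your framework, but as written the computation does not go through.
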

For brevity, here we state only the result for odd primes. The results for all primes can be found in Theorems \ref{quotient1} and \ref{quotient2}.
 In Theorems \ref{geom1} and \ref{geom2}, we give a similar description of the $\ell$-primary part of $\Br(\overline{E}\times\overline{E})^{\Gamma_L}$ 
 for every prime $\ell$.
One can apply these results to gain information about the transcendental part of the Brauer group for a wider class of varieties. If $\pi:X\dasharrow Y$ is a dominant rational map of degree $d$ between K3 or abelian surfaces over $L$, then by the proof of \cite{I-S} Corollary 2.2, it induces a surjective map of $\Gamma_L$-modules
$$\pi^*:\Br(\overline{Y})\rightarrow\Br(\overline{X})$$
whose kernel is annihilated by $d$. Thus, if $\ell$ is prime and coprime to $d$, then
\[\left(\frac{\Br(Y)}{\Br_1(Y)}\right)_{\ell^{\infty}}\hookrightarrow
\Br(\overline{Y})^{\Gamma_L}_{\ell^{\infty}}=\Br(\overline{X})^{\Gamma_L}_{\ell^{\infty}}.\]
The following examples are of interest. Suppose that $E/L$ has complex multiplication by $\cO_K$.
\begin{enumerate}
\item $Y=E\times E'$ where $E'/L$ is an elliptic curve which is isogenous to $E$ over $L$. Take $\ell$ coprime to the degree of the isogeny.
\item $Y=E'\times E'$ where $E'/L$ is an elliptic curve with complex multiplication by a non-maximal order $\cO\subset \cO_K$. Take $\ell$ coprime to the index $[\cO_K:\cO]$. This is because there is an isogeny of degree $[\cO_K:\cO]$, defined over $L$, from $E'$ to an elliptic curve over $L$ with complex multiplication by $\cO_K$. 
\item $Y=\Kum(E\times E)$, the K3 surface which is the minimal desingularisation of the quotient of $E\times E$ by the involution $(P,Q)\mapsto (-P,-Q)$. 
\end{enumerate}

\medskip

More is known for a Kummer surface $X=\Kum(E\times E)$. 
By Proposition 1.3 of \cite{SZtorsion}, there is an isomorphism of $\Gamma_L$-modules
\[\Br(\overline{X})\rightarrow \Br(\overline{E}\times\overline{E})\]
and therefore
\[\Br(\overline{X})^{\Gamma_L}=\Br(\overline{E}\times\overline{E})^{\Gamma_L}.\]
By Theorem 2.4 of \cite{SZtorsion}, for every $n\in\bbZ_{>0}$ there is an embedding
\begin{equation}
\label{eq:embedding}
\Br(X)_n/\Br_1(X)_{n}
\hookrightarrow\Br(E\times E)_{n}/\Br_1(E\times E)_{n}
\end{equation}
which is an isomorphism if $n$ is odd. So for $\ell$ an odd prime, 
\begin{equation}
(\Br(X)/\Br_1(X))_{\ell^{\infty}}=(\Br(E\times E)/\Br_1(E\times E))_{\ell^{\infty}}.
\end{equation} 

Examples involving K3 surfaces are important for applications because for abelian varieties with finite Tate-Shafarevich group, any Brauer-Manin obstruction can be explained by the algebraic part of the Brauer group, see \S 6.2 of \cite{Skorobogatov}. However, for K3 surfaces the transcendental part of the Brauer group can play an essential role in the Brauer-Manin obstruction.
Examples of this are given in \cite{Ieronymou}, \cite{HVV}, \cite{Preu} and \cite{I-S}. We give another example in Theorem \ref{thm:Brauer-Maninset} below. We focus on elliptic curves with a transcendental element of odd order in $\Br(E\times E)$ because this will give rise to a transcendental element in the Brauer group of $\Kum(E\times E)$.

\begin{theorem}
\label{thm:quadratictwist}
Let $E/\bbQ$ be an elliptic curve with complex multiplication by $\cO_K$ such that $\Br(E\times E)$ contains a transcendental element of odd order.
 Then $E$ has affine equation $y^2=x^3+2c^3$ for some $c\in\bbQ^\times$. Moreover, for $X=\Kum(E\times E)$ we have $\Br_1(X)=\Br(\bbQ)$ and \[\Br(X)/\Br(\bbQ)=\Br(X)_3/\Br(\bbQ)_3=\Br(E\times E)_3/\Br_1(E\times E)_3\cong \bbZ/3.\]
\end{theorem}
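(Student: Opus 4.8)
The plan is to combine the general computation of $(\Br(E\times E)/\Br_1(E\times E))_{\ell^\infty}$ with a careful analysis of the integer $m(\ell)$ for the curve $E/\bbQ$ with CM by $\cO_K$. By the first displayed Theorem (the odd-prime case), a transcendental element of odd order in $\Br(E\times E)$ forces some odd prime $\ell$ with $m(\ell)\geq 1$, i.e. $\End_{\Gamma_{\bbQ}} E_{\ell} \neq (\cO_K\otimes\bbZ/\ell)^{\Gamma_{\bbQ}}$. Since $E$ is defined over $\bbQ$, we have $K\not\subset\bbQ$, so the quotient is cyclic; the first step is therefore to use Proposition \ref{upper bound} (the class field theory computation of $m(\ell)$) to pin down exactly which pairs $(K,\ell)$ and which curves can have $m(\ell)\geq 1$. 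I expect this to single out $K=\bbQ(\sqrt{-3})$ and $\ell=3$: the extra Galois-equivariant endomorphisms of $E_{\ell^m}$ come from the action of $\cO_K^\times$ (roots of unity) together with the failure of the mod-$\ell$ cyclotomic-type character cut out by the CM to be surjective, and only $\mu_6\subset\cO_K^\times$ for $K=\bbQ(\sqrt{-3})$ together with $\ell=3$ (where $3$ ramifies in $K$ and $\sqrt{-3}$ generates the relevant quadratic subextension) produces a nontrivial contribution. This is the step I anticipate being the main obstacle: it requires showing that for every other imaginary quadratic $K$ and every odd $\ell$, the class field theory forces $m(\ell)=0$, which amounts to checking that the Galois action on $E_\ell$ is "as large as possible" outside this one exceptional case.

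Having reduced to $K=\bbQ(\sqrt{-3})$ and $\ell=3$, the next step is to identify the elliptic curves $E/\bbQ$ with CM by $\cO_K=\bbZ[\zeta_3]$ for which $m(3)\geq 1$. Every such $E$ is a quadratic (indeed sextic) twist of the curve $y^2=x^3+1$, so write $E: y^2 = x^3 + d$ for some $d\in\bbQ^\times$, determined up to sixth powers. The condition $m(3)\geq 1$ translates, via Proposition \ref{upper bound}, into a congruence/splitting condition on $d$ relative to the prime above $3$ in $K$; I would compute the relevant Hecke character (the CM character of $E$) modulo the conductor and determine for which twisting parameter $d$ the mod-$\sqrt{-3}{}^k$ reduction fails to be surjective. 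The expected outcome is that this holds precisely when $d = 2c^3$ for some $c\in\bbQ^\times$ (equivalently $d\equiv 2$ in $\bbQ^\times/(\bbQ^\times)^3$ up to the ambiguity that does not affect $E_{3^\infty}$ as a Galois module), giving the affine equation $y^2 = x^3 + 2c^3$. The twist by a cube $c$ does not change the curve up to $\bbQ$-isomorphism in a way visible to the $3$-adic Tate module beyond what we need, so the relevant invariant is really the class of $2$.

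Once the curve is pinned down, the remaining assertions about $X = \Kum(E\times E)$ follow from the structural results already in the excerpt. For the transcendental part: $m(3)\geq 1$ together with the upper bound gives $m(3)=1$ (one checks the class field theory does not allow $m(3)\geq 2$ for these curves), so by the first Theorem $(\Br(E\times E)/\Br_1(E\times E))_{3^\infty} = \Br(E\times E)_3/\Br_1(E\times E)_3 \cong \bbZ/3$ since $K\not\subset\bbQ$; for all other odd primes $\ell$ we showed $m(\ell)=0$, hence no contribution. By the isomorphism \eqref{eq:embedding} for odd $n$, this transfers verbatim to $X$, giving $\Br(X)_3/\Br_1(X)_3 \cong \bbZ/3$ and no odd transcendental part beyond $3$. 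It remains to show $\Br_1(X) = \Br(\bbQ)$ and that there is no $2$-primary or higher contribution to $\Br(X)/\Br(\bbQ)$. The equality $\Br_1(X)=\Br(\bbQ)$ should follow from $X(\bbQ)\neq\emptyset$ (the Kummer surface has an obvious rational point coming from $2$-torsion) together with $H^1(\bbQ, \Pic\overline{X}) = 0$, which one gets from the known structure of $\Pic$ of a Kummer surface of a product of CM elliptic curves and the vanishing of the relevant Galois cohomology; alternatively one can invoke that $\Br_1(X)/\Br(\bbQ) \hookrightarrow H^1(\bbQ,\Pic\overline{X})$ and compute the latter. Finally, the embedding \eqref{eq:embedding} need not be an isomorphism at $2$, but combined with the computation of $\Br(E\times E)/\Br_1(E\times E)$ at the prime $2$ (from Theorems \ref{quotient1} and \ref{quotient2}, which I may assume) one checks the $2$-primary part of $\Br(X)/\Br_1(X)$ vanishes for these curves, so $\Br(X)/\Br(\bbQ) = \bbZ/3$ exactly. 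Assembling these pieces yields the stated theorem.
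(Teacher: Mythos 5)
Your plan follows essentially the same route as the paper: the ring class field degree formula (Proposition \ref{upper bound} together with \eqref{degreeformula}, packaged in the paper as Theorem \ref{LinK} with $L=\bbQ(j(E))=\bbQ$) isolates $K=\bbQ(\zeta_3)$ and $\ell=3$; the explicit Gr\"ossencharacter $\psi_{E/K}(\frq)=(\tfrac{4D}{\pi_\frq})_6^{-1}\pi_\frq$ then shows $m(3)\geq 1$ iff $4D$ is a cube, i.e. $D=2c^3$, and simultaneously $m(2)=0$, so Theorem \ref{quotient2} and the embedding \eqref{eq:embedding} give the claimed $\bbZ/3$; and $\Br_1(X)=\Br(\bbQ)$ reduces via Proposition 1.4 of \cite{SZtorsion} to $H^1(\bbQ,\cO_K)=0$, which is your $H^1(\bbQ,\Pic\overline{X})$ computation. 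The computations you flag as the remaining work (the exceptional-pair analysis and the sextic residue symbol evaluation) are exactly the ones carried out in Theorem \ref{LinK} and Example \ref{egzeta3}.
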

For $c\in\bbQ^\times$, let $E^c$ denote the elliptic curve over $\bbQ$ with affine equation $y^2=x^3+2c^3$. Let $X=\Kum(E^c\times E^c)$ denote the Kummer surface, which is independent of the choice of $c\in\bbQ^\times$.
One consequence of Theorem \ref{thm:quadratictwist} is that the algebraic elements of $\Br(X)$ do not produce a Brauer-Manin obstruction. The following theorem shows that a transcendental Brauer element gives rise to a Brauer-Manin obstruction to weak approximation on $X$. 

\begin{theorem}
\label{thm:Brauer-Maninset}
Let \mbox{$\cA\in\Br(X)_3\setminus \Br(\bbQ)$}. Let $\nu$ be a place of $\bbQ$. Then the evaluation map 
\[\ev_{\cA, \nu}:X(\bbQ_\nu)\rightarrow \Br(\bbQ_v)_3\]
is surjective for $\nu=3$ and zero for every other place. Consequently,  
\[X(\bbA_{\bbQ})^{\Br(X)}=X(\bbQ_3)_0\times X(\bbR)\times \prod_{\ell\neq 3}{X(\bbQ_{\ell})}\ \subsetneq\  X(\bbA_{\bbQ})\]
where $X(\bbQ_3)_0$ denotes the points $P\in X(\bbQ_3)$ with $\ev_{\cA,3}(P)=0$, and the product runs over prime numbers $\ell\neq 3$.
\end{theorem}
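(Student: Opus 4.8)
The plan is to compute the local evaluation map $\ev_{\cA,\nu}$ place by place and then to read off $X(\bbA_{\bbQ})^{\Br(X)}$ from the fact that the Brauer--Manin pairing is the sum of local invariants. Since $\Br(X)/\Br(\bbQ)\cong\bbZ/3$ is generated by $\cA$ (Theorem \ref{thm:quadratictwist}), and since global reciprocity makes every adelic point orthogonal to (the image of) $\Br(\bbQ)$, a point $(P_\nu)\in X(\bbA_{\bbQ})$ lies in $X(\bbA_{\bbQ})^{\Br(X)}$ if and only if $\sum_\nu\operatorname{inv}_\nu\ev_{\cA,\nu}(P_\nu)=0$. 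Two elements of $\Br(X)_3$ with the same image in $\Br(X)/\Br(\bbQ)$ differ by a class in $\Br(\bbQ)_3$, which shifts each $\ev_{\cA,\nu}$ by a $\nu$-dependent constant; so the real content of the first two assertions is that $\ev_{\cA,\nu}$ is \emph{constant} for every $\nu\neq3$, after which $\cA$ is normalised within its coset so that this constant vanishes. Since $X=\Kum(E^c\times E^c)$ is independent of $c\in\bbQ^\times$, I may use whichever model $E^c$ is most convenient at each step; taking $c=1$ exhibits a smooth proper model of $X$ over $\bbZ[1/6]$. Finally, to make the computations concrete I would use the identification $\Br(\overline X)^{\Gamma_{\bbQ}}=\Br(\overline{E^c}\times\overline{E^c})^{\Gamma_{\bbQ}}$ together with the explicit description of the $3$-primary part via $\End_{\Gamma_{\bbQ}}E_3$ and the CM action of $\cO_K$ (Theorems \ref{quotient1}--\ref{quotient2} and \cite{SZtorsion}) to represent $\cA$ over the function field $\bbQ(X)$ as the corestriction from $K=\bbQ(\zeta_3)$ of a cyclic degree-$3$ algebra $\bigl(\alpha,\beta\bigr)_{\zeta_3}$ with explicit $\alpha,\beta$.

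For $\ell\notin\{2,3\}$ the class $\cA$ has order prime to $\ell$, hence lifts to $\Br(\cX)$ for a smooth proper $\bbZ_\ell$-model $\cX$ of $X$; as $X(\bbQ_\ell)=\cX(\bbZ_\ell)$ by properness, every $P\in X(\bbQ_\ell)$ gives $\ev_{\cA}(P)\in\Br(\bbZ_\ell)=0$, so $\ev_{\cA,\ell}\equiv0$. At the real place $\Br(\bbR)_3=0$, hence $\ev_{\cA,\infty}\equiv0$. The place $\ell=2$ is the first genuine step, since $E^c$ has additive reduction at $2$ for every $c$ and no smooth model is available; here $\zeta_3\notin\bbQ_2$ and $\bbQ_2(\zeta_3)/\bbQ_2$ is \emph{unramified}, so the cubic Hilbert symbol over $\bbQ_2(\zeta_3)$ governing $\ev_{\cA,2}$ is tame (residue characteristic $2\neq3$), and an explicit computation --- conveniently with $c=2$, where $E^2\colon y^2=x^3+16$ carries the rational $3$-torsion point $(0,4)$ so that $E_3$ and the algebra become fully explicit --- shows $\operatorname{inv}_2\ev_{\cA}(P)=0$ for all $P\in X(\bbQ_2)$.

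The place $\nu=3$ carries the obstruction, and the reason it is exceptional is that $3$ is the unique prime ramifying in $\bbQ(\zeta_3)$: the extension $\bbQ_3(\zeta_3)/\bbQ_3$ is ramified of degree $2$ and has residue characteristic $3$, which is the order of $\cA$, so the cubic norm residue symbol over $\bbQ_3(\zeta_3)$ is wild and need not vanish on units. Using $\operatorname{inv}_3\ev_{\cA}(P)=2\operatorname{inv}_{\bbQ_3(\zeta_3)}\bigl(\alpha(P),\beta(P)\bigr)_{\zeta_3}$, I would evaluate the right-hand side via the explicit formula for the degree-$3$ norm residue symbol over the ramified local field $\bbQ_3(\zeta_3)$, and then exhibit a short list of points $P_0,P_1,P_2\in X(\bbQ_3)$, obtained by $3$-adically perturbing a convenient base point, along which $\beta(P)$ remains in a fixed cube class while $\alpha(P)$ takes sufficiently many distinct classes in $\bbQ_3(\zeta_3)^\times/(\bbQ_3(\zeta_3)^\times)^3$; by non-degeneracy of the local cubic pairing this forces $\ev_{\cA,3}$ to be surjective onto $\bbZ/3$.

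Granting this, the stated description follows: the invariant sum collapses to its $\nu=3$ term, so an adelic point is orthogonal to $\Br(X)$ exactly when $P_3\in X(\bbQ_3)_0$, which gives the product formula; surjectivity of $\ev_{\cA,3}$ makes $X(\bbQ_3)_0\subsetneq X(\bbQ_3)$, and $X(\bbA_{\bbQ})\neq\emptyset$ (indeed $X(\bbQ)\neq\emptyset$, using a rational point of $E^c$), so the inclusion is strict and the obstruction is to weak approximation only. I expect the two principal difficulties to be: extracting from the abstract description of $\Br(\overline X)^{\Gamma_{\bbQ}}$ a genuinely computable cyclic-algebra representative of the transcendental class $\cA$; and the bad-reduction local analyses, namely the somewhat delicate ``easy'' vanishing at $\ell=2$ and, above all, the wild cubic norm residue computation at $\nu=3$ together with the verification that its values exhaust $\bbZ/3$.
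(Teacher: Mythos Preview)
Your outline is coherent and the easy places ($\nu=\infty$ and $\ell\nmid 6$) are handled exactly as in the paper. But your route at the bad places is genuinely different from the paper's, and the hard steps you flag are precisely the ones the paper manages to avoid.

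You propose to extract from the abstract description an explicit corestricted cyclic algebra $(\alpha,\beta)_{\zeta_3}$ over $\bbQ(X)$ and then evaluate cubic Hilbert symbols, tamely at $2$ and wildly at $3$. The paper never writes down such an algebra. Instead it uses the Skorobogatov--Zarhin cup-product formula (Proposition~\ref{prop:cupproduct}): for a point $R\in X(\bbQ_\ell)$ arising from $(P,Q)\in E^c(\bbQ_\ell)^2$ one has $\ev_{\cA,\ell}(R)=\chi_P\cup\theta^*(\chi_Q)$, where $\chi$ is the Kummer map $E^c(\bbQ_\ell)\to H^1(\bbQ_\ell,E^c_3)$ and $\theta\in\End E^c_3$ is the image of complex conjugation. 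This reduces everything to the structure of $E^c(\bbQ_\ell)/3$ and the action of $\theta$ on $H^1$. At $\ell=2$ the paper shows $E^d(\bbQ_2)$ is $3$-divisible in the additive case, and in the good-reduction case that $\chi$ factors through a cyclic group of order $3$; the pairing then vanishes by a diagonal-embedding trick ($\cA$ restricts to a constant on the image of $E^d\hookrightarrow X$, which is a $\bbP^1$). At $\ell=3$ the paper takes $c=3$, proves $E(\bbQ_3)/3\cong(\bbZ/3)^2$ with explicit generators $P=(3,9)$, $Q=(4,\sqrt{118})$, and then shows $\theta^*(\chi_P)$ is not in the image of $\chi$ by analysing the splitting fields of the cubics $g_P$, $g_Q$ (one unramified over $\bbQ_3(\zeta_3)$, the other ramified) together with the irreducibility of the degree-$9$ division polynomial $f_P$; non-degeneracy of the local Tate pairing then gives surjectivity. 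No wild symbol formula is ever invoked.

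What each approach buys: the paper's cup-product method sidesteps the first ``principal difficulty'' you anticipate (producing a computable central simple algebra representative) entirely, and replaces the wild Hilbert-symbol computation at $3$ by elementary facts about ramification of cubic extensions of $\bbQ_3(\zeta_3)$. Your approach, if the explicit $(\alpha,\beta)$ could be written down, would in principle yield more: a closed formula for $\ev_{\cA,3}$ on all of $X(\bbQ_3)$, not merely surjectivity. But as stated your proposal leaves both hard steps as intentions rather than arguments, and it is not clear that either can be completed without substantial additional work (extracting the algebra from the Skorobogatov--Zarhin isomorphism is already nontrivial, and the wild degree-$3$ symbol over $\bbQ_3(\zeta_3)$ has no convenient closed form). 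Your normalisation remark---that the content at $\nu\neq 3$ is constancy, after which reciprocity at a rational point fixes the constants---is a fair reading of what the theorem actually asserts.
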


The structure of the paper is as follows. Section \ref{compute} is devoted to the computation of the transcendental part of the Brauer group of $E\times E$ for a CM elliptic curve $E$. Section \ref{examples} contains applications of these results to special cases and explicit examples. The proof of Theorem \ref{thm:quadratictwist} is carried out in Section \ref{Kummer}. In Section \ref{obstruction}, we compute the Brauer-Manin obstruction to weak approximation on $\Kum(E\times E)$ for $E/\bbQ$ (a quadratic twist of) the elliptic curve with affine equation \mbox{$y^2=x^3+2$}, leading to a proof of Theorem \ref{thm:Brauer-Maninset}.

\subsection{Notation}
We fix the following notation.

\smallskip

\begin{tabular}{ll}

$K$ & an imaginary quadratic field\\
$\cO_K$ & the ring of integers of $K$\\
$\Delta_K$ & the discriminant of $K$\\
$H_K$ & the Hilbert class field of $K$\\
$h(\cO_K) $ & the class number of $\cO_K$, $h(\cO_K)=[H_K:K]$\\
\end{tabular}

\begin{tabular}{ll}
$L$ &  a number field\\
$\overline{L}$ & an algebraic closure of $L$ such that $H_K\subset \overline{L}$\\ 
$\Gamma_F$ & the absolute Galois group of  a field $F$\\

$\mu_n$ & the group of $n$th roots of unity\\
$\zeta_n$  & a primitive $n$th root of unity\\
$E$ & an elliptic curve over $L$ with complex multiplication by $\cO_K$\\
$\overline{E}$ & the base change of $E$ to $\overline{L}$, $\overline{E}=E\times_L \overline{L}$\\
$E_n$ & the $n$-torsion points of $E$ defined over $\overline{L}$\\
$E_n(F)$ & the $n$-torsion points of $E$ defined over a field extension $F$ of $L$\\
$\Kum(E\times E)$ & the K3 surface which is the minimal desingularisation \\
& of the quotient of $E\times E$ by the involution $(P,Q)\mapsto (-P,-Q)$\\
$f_{\frq/\frp}$ & the residue class degree $f_{\frq/\frp}=[\cO_M/\frq:\cO_F/\frp]$ for a prime $\frq$ in \\& a number field $M$ lying above a prime $\frp$ in a subfield $F\subset M$.\\
\end{tabular}\\
\smallskip

 For any $c\in\bbZ_{>0}$, we use the following notation.  \\

\begin{tabular}{ll}
$\cO_{c}$ & the order $\bbZ+c\cO_K$ of conductor $c$ in $\cO_K$\\
$K_c$ & the ring class field corresponding to the order $\cO_c$. \\
\end{tabular}\\

Specifically, let $I_K(c)$ denote the group of fractional $\cO_K$-ideals coprime to $c$, and let $P_{K,\bbZ}(c)$ be the subgroup generated by the principal ideals $\alpha\cO_K$ where $\alpha\in\cO_c$ and $\alpha$ is coprime to $c$.
Then $K_c$ is defined to be the extension of $K$ that corresponds under class field theory to $P_{K,\bbZ}(c)$. The Artin map gives an isomorphism $I_K(c)/P_{K,\bbZ}(c)\cong \Gal(K_c/K)$. In particular, we will make use of the ring class fields $K_{\ell^n}$ for a prime number $\ell$ and $n\in\bbZ_{\geq 0}$. They enjoy the following properties which will be needed later.

\begin{enumerate}
\item $K_{\ell^0}=H_K$,
\item $K_{\ell^n}\subset K_{\ell^{n+1}}$ for $n\geq 0$,
\item the degree $[K_{\ell^n}:K]$ tends to infinity as $n$ tends to infinity. 
\end{enumerate}
For more details on ring class fields, see \cite{Cox} for example.
\medskip

For an abelian group $A$ and an integer $n\in\bbZ_{>0}$, we write $A_n$ for the elements of order dividing $n$ in $A$. For a prime number $\ell\in\bbZ_{>0}$, we write $A_{\ell^{\infty}}$ for the $\ell$-primary part of the abelian group $A$.

\medskip

For $x\in\bbR$, let $\floor{x}$, $\ceil{x}$ denote the floor and ceiling of $x$ respectively.


\section{Transcendental Brauer group computations}
\label{compute}
\subsection{Preliminaries}

Let $L$ be a number field and let $\Gamma_L$ denote its absolute Galois group. In \cite{SZtorsion}, for $A=E\times E'$ a product of elliptic curves defined over $L$ and for every $n\in\bbZ_{>0}$, Skorobogatov and Zarhin gave a canonical isomorphism of \mbox{$\Gamma_L$-modules} 
\begin{equation}
\label{n-torsion}
\Br(\overline{A})_n=\Hom(E_n, E'_n)/(\Hom(\overline{E},\overline{E'})\otimes\bbZ/n)
\end{equation}
and a canonical isomorphism of abelian groups
\begin{equation}
\label{quotient}
\Br(A)_n/\Br_1(A)_n=\Hom_{\Gamma_L}(E_n,E'_n)/(\Hom(\overline{E},\overline{E'})\otimes\bbZ/n)^{\Gamma_L}.
\end{equation}
They used this concrete description of the transcendental part of the Brauer group to give many examples for which $\Br(A)/\Br_1(A)$ is trivial or a finite abelian $2$-group.

\smallskip

From now on, we fix an elliptic curve $E/L$ with complex multiplication by $\cO_K$. 
We begin with a simple observation which enables us to use \eqref{quotient} to compute \mbox{$(\Br(E\times E)/\Br_1(E\times E))_{\ell^{\infty}}$}.
\begin{lemma}
Let $X$ be a smooth, projective, geometrically irreducible variety over a number field. Then for any prime number $\ell$, we have 
\[(\Br(X)/\Br_1(X))_{\ell^{\infty}}=\Br(X)_{\ell^{\infty}}/\Br_1(X)_{\ell^{\infty}}.\]
\end{lemma}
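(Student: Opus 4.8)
The plan is to reduce the statement to a general fact about the Brauer group of a smooth projective geometrically irreducible variety, and then exploit the known finiteness of the transcendental quotient. First I would recall that for such an $X$, the group $\Br(X)$ is torsion (this is standard, since $\Br(X)$ injects into $\Br(\overline{X})$ modulo the algebraic part, and more fundamentally $\Br(X)$ is a torsion group for any regular scheme—actually one uses $\Br(X)\hookrightarrow H^2_{\mathrm{\acute et}}(X,\bbG_m)$ is torsion). Consequently $\Br(X)$ and its subgroup $\Br_1(X)$ decompose as direct sums of their $\ell$-primary components over all primes $\ell$, and the same holds for the quotient $\Br(X)/\Br_1(X)$. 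In particular, taking $\ell$-primary parts is an exact functor on torsion abelian groups, so
\[
\left(\Br(X)/\Br_1(X)\right)_{\ell^\infty}=\Br(X)_{\ell^\infty}/\Br_1(X)_{\ell^\infty}.
\]
Written this way the lemma looks almost tautological, so the real content must be a subtlety I should be careful about.

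The subtlety, and the step I expect to be the main obstacle, is this: the identity $(\Br(X)/\Br_1(X))_{\ell^\infty}=\Br(X)_{\ell^\infty}/\Br_1(X)_{\ell^\infty}$ holds for any torsion abelian group automatically, but the paper presumably wants the stronger-looking statement with $\Br(X)_{\ell^\infty}$ in the numerator replaced (implicitly, for use with \eqref{quotient}) by something one can compute at a finite level. The genuinely useful consequence is that $(\Br(X)/\Br_1(X))_{\ell^\infty}$ is a \emph{finite} group—here one needs that $\Br(X)/\Br_1(X)$ is finite, which for $X$ an abelian variety or K3 surface is exactly the Skorobogatov--Zarhin theorem cited in the introduction. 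Combined with torsionness this gives that $(\Br(X)/\Br_1(X))_{\ell^\infty}=(\Br(X)/\Br_1(X))_{\ell^m}$ for $m$ large enough, and the snake lemma applied to multiplication by $\ell^m$ on the short exact sequence $0\to\Br_1(X)\to\Br(X)\to\Br(X)/\Br_1(X)\to 0$ identifies this with $\Br(X)_{\ell^m}/\Br_1(X)_{\ell^m}$ once $\ell^m$ also kills the relevant cokernels. But as stated, the lemma is just the formal identity, so I would keep the proof short.

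So concretely the proof I would write is: since $X$ is smooth and geometrically irreducible over a field, $\Br(X)$ is a torsion abelian group (for instance because $\Br(X)\hookrightarrow\Br(\overline{X})$ has kernel $\Br_1(X)$ which is torsion, being a subgroup of $H^1(\Gamma_L,\operatorname{Pic}\overline{X})$ up to the transcendental issues—actually more simply, $\Br(X)$ embeds in $\Br(L(X))$ which is torsion). A torsion abelian group $A$ is the direct sum $\bigoplus_\ell A_{\ell^\infty}$ of its $\ell$-primary components, and a subgroup $B\subseteq A$ satisfies $B_{\ell^\infty}=B\cap A_{\ell^\infty}$, whence $(A/B)_{\ell^\infty}=A_{\ell^\infty}/B_{\ell^\infty}$. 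Applying this with $A=\Br(X)$ and $B=\Br_1(X)$ gives the claim. I would flag that $\Br(X)$ is torsion as the only input, cite the standard reference (e.g. Grothendieck, or \cite{Skorobogatov}), and conclude. The main thing to get right is simply not overcomplicating it: the statement is purely formal once torsionness of $\Br(X)$ is in hand, and the finiteness input is what makes it \emph{useful} downstream, but it is not needed for the lemma itself.
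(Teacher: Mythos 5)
Your proposal is correct and follows essentially the same route as the paper: the only input is that $\Br(X)$ is torsion (the paper cites Proposition 1.4 of Grothendieck's \emph{Le groupe de Brauer II}), after which the identity is the formal statement that taking $\ell$-primary parts is exact on torsion abelian groups. Your closing remark is also the right reading of the situation --- the finiteness from Skorobogatov--Zarhin is what makes the lemma useful later, but it is not needed for the lemma itself.
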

\begin{proof}
Since $X$ is smooth, Proposition 1.4 of \cite{Grothendieck} tells us that $\Br(X)$ is a torsion abelian group.
It follows that the natural inclusion
\[\Br(X)_{\ell^{\infty}}/\Br_1(X)_{\ell^{\infty}}\hookrightarrow (\Br(X)/\Br_1(X))_{\ell^{\infty}}\]
is an equality. 
\end{proof}

To each prime number $\ell\in\bbZ_{>0}$ we associate an integer $m(\ell)$ which will appear in our description of the $\ell$-primary part of the transcendental Brauer group of $E\times E$. In order to define $m(\ell)$, we use  the Gr\"{o}ssencharacter $\psi_{E/KL}$ of $E$ considered as an elliptic curve over $KL$. Recall that $\psi_{E/KL}$ is unramified at the primes of $KL$ of good reduction for $E$. Therefore, for such primes we write $\psi_{E/KL}(\frq)$ for the evaluation of $\psi_{E/KL}$ at an idele $(\dots , 1,1,\pi_{\frq},1,1,\dots)\in \bbA^{\times}_{KL}$ where the entry $\pi_{\frq}$ at the prime $\frq$ is a uniformiser at $\frq$.

\begin{definition}
\label{def:m}
For a prime number $\ell\in\bbZ_{>0}$, let $m(\ell)$ be the largest integer $k$ such that for all 
primes $\frq$ of $KL$ which are of good reduction for $E$ and coprime to $\ell$,  
the Gr\"{o}ssencharacter 
$\psi_{E/KL}$ satisfies $$\psi_{E/KL}(\frq)\in\cO_{\ell^k}=\bbZ+\ell^k\cO_K.$$
\end{definition}

We define an auxiliary integer $n(\ell)$ which aids computation of $m(\ell)$ and in most cases removes the dependence on the Gr\"{o}ssencharacter.

\begin{definition}
For a prime number $\ell\in\bbZ_{>0}$, let $n(\ell)$ be the largest integer $k$ for which the ring class field $K_{\ell^k}$ of the order $\cO_{\ell^k}$ embeds into $KL$.
\end{definition}

\begin{proposition}
\label{upper bound}
Let $\ell\in\bbZ_{>0}$ be prime. Then $$m(\ell)\leq n(\ell)$$ with equality if $\cO_K^*=\{\pm 1\}$ (in other words, if $K\notin\{\bbQ(i),\bbQ(\zeta_3)\}$).
\end{proposition}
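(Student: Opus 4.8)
The plan is to relate both invariants $m(\ell)$ and $n(\ell)$ to the arithmetic of the Grössencharacter $\psi = \psi_{E/KL}$ via the main theorem of complex multiplication, and then to observe that the gap between them is governed precisely by the unit group $\cO_K^*$. Recall that $\psi$ takes values in $\cO_K$, and that for a prime $\frq$ of $KL$ of good reduction, $\psi(\frq)$ generates the ideal $N_{KL/K}(\frq)$ (suitably interpreted). The condition $\psi(\frq) \in \cO_{\ell^k} = \bbZ + \ell^k\cO_K$ is a \emph{congruence} condition on $\psi(\frq)$ modulo $\ell^k$: it says $\psi(\frq) \equiv (\text{rational integer}) \pmod{\ell^k\cO_K}$. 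By Definition \ref{def:m}, $m(\ell)$ is the largest $k$ for which this holds simultaneously for all such $\frq$ coprime to $\ell$.

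\textbf{Step 1: The inequality $m(\ell) \le n(\ell)$.} I would argue that if $\psi(\frq) \in \cO_{\ell^k}$ for all good primes $\frq$ coprime to $\ell$, then the ring class field $K_{\ell^k}$ embeds into $KL$. The key is class field theory for the order $\cO_{\ell^k}$: the ring class field $K_{\ell^k}$ corresponds to the subgroup of $\bbA_K^\times$ (or of the idele class group) cut out by the condition that the idele be congruent to a rational idele modulo $\ell^k$, i.e. the subgroup whose associated ideals are generated by elements of $\cO_{\ell^k}$. Now $\psi$, being the Grössencharacter of $E/KL$, factors through the reciprocity map for $KL$; the statement that $\psi(\frq) \in \cO_{\ell^k}$ for all good $\frq$ coprime to $\ell$ means, after checking that the finitely many bad/ramified primes and primes above $\ell$ do not interfere (one uses that $K_{\ell^k}/K$ is unramified outside $\ell$ together with Chebotarev to reduce to such $\frq$), that the restriction of the $\cO_{\ell^k}$-ring-class reciprocity character to $\Gamma_{KL}$ is trivial. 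Hence $KL$ contains $K_{\ell^k}$, so $k \le n(\ell)$; taking $k = m(\ell)$ gives the inequality.

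\textbf{Step 2: Equality when $\cO_K^* = \{\pm 1\}$.} Here I would prove the reverse inequality $n(\ell) \le m(\ell)$. Suppose $K_{\ell^k} \subset KL$ with $k = n(\ell)$. Then for every good prime $\frq$ of $KL$ coprime to $\ell$, the ideal $N_{KL/K}(\frq)$ is principal, generated by some element $\alpha \in \cO_K$ which — because $\frq$ splits completely over the subextension $K_{\ell^k}/K$ (as $K_{\ell^k} \subset KL$), and by the defining property of the ring class field — can be chosen in $\cO_{\ell^k} = \bbZ + \ell^k\cO_K$. The value $\psi(\frq)$ is \emph{a priori} only one such generator, so $\psi(\frq) = u\alpha$ for some unit $u \in \cO_K^*$. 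When $\cO_K^* = \{\pm1\}$ we get $\psi(\frq) = \pm\alpha \in \cO_{\ell^k}$ (since $\cO_{\ell^k}$ is closed under negation), giving $\psi(\frq) \in \cO_{\ell^k}$ for all relevant $\frq$, hence $m(\ell) \ge k = n(\ell)$. The hypothesis $\cO_K^* = \{\pm 1\}$ is equivalent to $K \notin \{\bbQ(i), \bbQ(\zeta_3)\}$ by the classification of units in imaginary quadratic fields, which takes care of the parenthetical remark.

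\textbf{Main obstacle.} I expect the delicate point to be Step 1: making precise the passage from "$\psi(\frq) \in \cO_{\ell^k}$ for all good $\frq$ coprime to $\ell$" to "$K_{\ell^k} \hookrightarrow KL$". One must correctly identify the ring class field $K_{\ell^k}$ with the fixed field of the kernel of the appropriate idelic character, handle the role of the infinite place and of the finitely many excluded primes (good reduction, coprimality to $\ell$) — here a density/Chebotarev argument shows these finitely many exclusions cost nothing, since the Frobenii at the allowed primes already generate $\Gal(K_{\ell^k}/K)$ — and verify the normalization of $\psi$ so that $\psi(\frq)$ genuinely encodes the splitting of $\frq$ in ring class fields. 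Step 2 is then comparatively formal, modulo the same identification, with the unit ambiguity being the only real input and the reason the stronger conclusion needs $\cO_K^* = \{\pm1\}$.
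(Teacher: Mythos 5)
Your proposal is correct and follows essentially the same route as the paper: the inequality $m(\ell)\le n(\ell)$ is obtained by showing that the congruence condition $\psi_{E/KL}(\frq)\in\cO_{\ell^k}$ for almost all $\frq$ forces $K_{\ell^k}\subseteq KL$ (the paper runs this as a contradiction via Artin symbols and the existence of a non-split prime, i.e.\ your Chebotarev/density step), and equality for $\cO_K^*=\{\pm1\}$ comes from the fact that $\psi_{E/KL}(\frq)$ generates $N_{KL/K}(\frq)=(\alpha)$ with $\alpha\in\cO_{\ell^n}$, so the only ambiguity is a unit. The one point to tighten is your phrase ``$\frq$ splits completely over $K_{\ell^k}/K$'': what is actually used is only that $f_{\frs/\frp}\mid f_{\frq/\frp}$, so that the \emph{norm ideal} $\frp^{f_{\frq/\frp}}$ has trivial Artin symbol in $\Gal(K_{\ell^k}/K)$ and hence a generator in $\cO_{\ell^k}$.
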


\begin{proof}
Write $m=m(\ell)$ and $n=n(\ell)$. Let $S$ be the set of primes $\frr$ of $KL$ that satisfy at least one of the following five conditions: \begin{enumerate} 
\item $\frr\mid\infty$,  
\item $\frr\mid \ell$,
\item $E$ has bad reduction at $\frr$, 
\item $\frr$ is ramified in $K_{\ell^{n+1}}L/K$, 
\item $\psi_{E/KL}(\frr)\notin\cO_{\ell^{n+1}}$.\end{enumerate}
Suppose for contradiction that \mbox{$m\geq n+1$}, and hence $S$ is a finite set. Then, since $K_{\ell^{n+1}}\nsubseteq KL$, Exercise 6.1 of \cite{CF} tells us that there exists a prime $\frq$ of $KL$ with $\frq\notin S$ which does not split completely in $K_{\ell^{n+1}}L/KL$. Let $\frp=\frq\cap\cO_K$. Let $f_{\frq/\frp}$ denote the residue class degree of $\frq$ over $\frp$, $f_{\frq/\frp}=[\cO_{KL}/\frq:\cO_K/\frp]$. By Theorems II.9.1 and II.9.2 of \cite{Silverman}, the Gr\"{o}ssencharacter $\psi_{E/KL}$ sends $\frq$ to a generator of the principal ideal $N_{KL/K}(\frq)=\frp^{f_{\frq/\frp}}$. Consider the following diagram of field extensions.
\[
\xymatrix{&& K_{\ell^{n+1}}L\ar@{-}[dr] \ar@{-}[dl]& & \\
&K_{\ell^{n+1}}\ar@{-}[dr] & & KL\ar@{-}[dl] &\frq \\
& &K &  \frp &\\
}
\]
Recall that, given an abelian extension of number fields $M/N$ and a prime ideal $\frr$ of $\cO_N$ that is unramified in $M/N$, the Artin symbol $(\frr,M/N)$ is the unique element $\sigma\in\Gal(M/N)$ such that for all $\alpha\in \cO_M$, $$\sigma(\alpha)\equiv \alpha^{N_{F/\bbQ}(\frr)}\pmod{\frs}$$
where $\frs$ is a prime of $M$ above $\frr$. In our case, $N_{KL/\bbQ}(\frq)=N_{K/\bbQ}(N_{KL/K}(\frq))=N_{K/\bbQ}(\frp)^{f_{\frq/\frp}}$. Therefore, the restriction of the Artin symbol $(\frq,K_{\ell^{n+1}}L/KL)$ to $K_{\ell^{n+1}}/K$ (in other words, its image in $\Gal(K_{\ell^{n+1}}/K)$) satisfies
\begin{eqnarray*}
\Res_{K_{\ell^{n+1}}/K}(\frq,K_{\ell^{n+1}}L/KL)=(\frp,K_{\ell^{n+1}}/K)^{f_{\frq/\frp}}
&=&(\frp^{f_{\frq/\frp}},K_{\ell^{n+1}}/K)\\
&=&((\psi_{E/KL}(\frq)),K_{\ell^{n+1}}/K).
\end{eqnarray*}
Since $\frq\notin S$, we have $\psi_{E/KL}(\frq)\in\cO_{\ell^{n+1}}$ and hence \[((\psi_{E/KL}(\frq)),K_{\ell^{n+1}}/K)=1\]
by definition of the ring class field $K_{\ell^{n+1}}$.
But this implies that 
\[\Res_{K_{\ell^{n+1}}/K}(\frq,K_{\ell^{n+1}}L/KL)=1\]
and therefore
\[(\frq,K_{\ell^{n+1}}L/KL)=1.\]
This is a contradiction because $\frq$ does not split completely in $K_{\ell^{n+1}}L/KL$.
Therefore, $m\leq n$. It remains to show that $m=n$ when $\cO_K^*=\{\pm 1\}$. From now on, suppose that $\cO_K^*=\{\pm 1\}$. Let $\frq$ be a finite prime of $KL$ of good reduction for $E$ which is coprime to $\ell$ and unramified in $KL/K$. Let $\frp=\frq\cap\cO_K$ and let $\frs=\frq\cap\cO_{K_{\ell^n}}$. The Artin symbol $(\frp,K_{\ell^n}/K)$ has order $f_{\frs/\frp}$ in $\Gal(K_{\ell^n}/K)$. Since $K\subset K_{\ell^n}\subset KL$, we have $f_{\frs/\frp}\mid f_{\frq/\frp}$, whereby 
\[1=(\frp,K_{\ell^n}/K)^{f_{\frq/\frp}}=
(\frp^{f_{\frq/\frp}}, K_{\ell^n}/K)=(N_{KL/K}(\frq),K_{\ell^n}/K).\]
By definition of the ring class field $K_{\ell^n}$, this implies that $$N_{KL/K}(\frq)=\alpha\cO_K$$ for some $\alpha\in\cO_{\ell^n}$. But $\psi_{E/KL}(\frq)$ is a generator of $N_{KL/K}(\frq)$ and $\cO_K^*=\{\pm 1\}$ so this implies that $\psi_{E/KL}(\frq)\in\cO_{\ell^n}$, as required.
\end{proof}

\begin{remark}
\label{rem:ringclassdegree}
Class field theory gives $[K_c:K]=h(\cO_c)$, where $h(\cO_c)$ denotes the class number of the order $\cO_c$.
The following formula for $h(\cO_c)$ can be found in \cite{Cox}, Theorem 7.24, for example. 
\begin{equation}
\label{degreeformula}
[K_c:K]=h(\cO_c)=\frac{h(O_K) c}{ [\cO_K^*:\cO_c^*]}\prod_{p\mid c}{\Bigl(1-\Bigl(\frac{\Delta_K}{p}\Bigr)\frac{1}{p}\Bigr)}
\end{equation}
where the product is taken over the prime factors of $c$. The symbol $(\frac{\Delta_K}{p})$ denotes the Legendre symbol for odd primes. For the prime $2$, the Legendre symbol is replaced by the Kronecker symbol $(\frac{\Delta_K}{2})$, defined as
$$\Bigl(\frac{\Delta_K}{2}\Bigr)=\begin{cases}
0 & \textrm{if } 2\mid \Delta_K \\
1 & \textrm{if } \Delta_K\equiv 1\pmod{8}\\
-1 & \textrm{if } \Delta_K\equiv 5\pmod{8}.
\end{cases}$$

If $K_{\ell^k}\subset KL$, then $[K_{\ell^k}:K]$ divides $[KL:K]$. Thus, in any given example, \eqref{degreeformula} allows one to identify a finite set of primes $S$ such that $m(\ell)=n(\ell)=0$ for all $\ell\notin S$. For a prime $\ell$ in $S$, \eqref{degreeformula}  gives an upper bound for $n(\ell)$, and therefore also an upper bound for $m(\ell)$. For $K\in\{\bbQ(i), \bbQ(\zeta_3)\}$, one must examine the Gr\"{o}ssencharacter in order to compute $m(\ell)$. For explicit descriptions of Gr\"{o}ssencharacters for elliptic curves with complex multiplication by $\bbQ(i)$ or $\bbQ(\zeta_3)$, see \cite{R-S} Theorems 5.6 and 5.7 respectively.

\end{remark}

In Sections \ref{case1} and \ref{case2}, we will use the isomorphisms \eqref{n-torsion} and \eqref{quotient} 
to compute the $\ell$-primary part of the transcendental Brauer group of $E\times E$ in terms of endomorphisms of the $\ell$-power torsion of $E$. For this, we will need two auxiliary lemmas. Before stating the first lemma, we note that since $\End \overline{E}=\cO_K$, there is a natural map $\cO_K\otimes\bbZ/\ell^k\rightarrow \End E_{\ell^k}$.  The injectivity of this map follows from the fact that the $\ell$-adic Tate module $T_\ell(E)$ is a free $\cO_K\otimes \mathbb{Z}_\ell$-module of rank $1$. (For a more general fact about abelian varieties, see Proposition 2.2.1 of \cite{Ribet}.) Thus, we may view $\cO_K\otimes\bbZ/\ell^k$ as a subring of $\End E_{\ell^k}$.

\begin{lemma}
\label{E+}
Let $\ell\in\bbZ_{>0}$ be prime, let $k\in\bbZ_{\geq 0}$ and let $$(\End E_{\ell^k})^+=\{\psi\in\End E_{\ell^k}\mid \psi x=x\psi \ \forall x \in \cO_K\}.$$ Then, viewing $\cO_K\otimes\bbZ/\ell^k$ as a subring of $\End E_{\ell^k}$, we have
$$(\End E_{\ell^k})^+=\cO_K\otimes\bbZ/\ell^k.$$
\end{lemma}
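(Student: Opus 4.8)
\textbf{Proof plan for Lemma \ref{E+}.}

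The plan is to identify the ring $\End E_{\ell^k}$ explicitly and then compute the centralizer of $\cO_K \otimes \bbZ/\ell^k$ inside it by a direct matrix computation. First I would recall that, since $E$ has complex multiplication by $\cO_K$, the Tate module (or rather the finite-level analogue) is free of rank one over $\cO_K \otimes \bbZ/\ell^k$: that is, $E_{\ell^k}$ is a free module of rank one over the ring $R := \cO_K \otimes \bbZ/\ell^k$. This follows from the standard fact that $E_{\ell^k} \cong (\cO_K/\ell^k\cO_K)$ as $\cO_K$-modules, using that the $\ell^k$-torsion of $\bbC/\cO_K$ (or of $E$ over $\overline{L}$) is $\ell^{-k}\cO_K/\cO_K \cong \cO_K/\ell^k\cO_K$. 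Consequently $\End_{R} E_{\ell^k} = \End_R(R) = R$ acting by multiplication, and in particular the inclusion $R \hookrightarrow \End E_{\ell^k}$ stated in the lemma makes sense and $R$ is abelian.

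Next I would compute $\End E_{\ell^k}$ itself as a (generally noncommutative) ring. Writing $\cO_K = \bbZ[\omega]$ and $R = (\bbZ/\ell^k)[\omega]$, the group $E_{\ell^k}$ is free of rank $2$ over $\bbZ/\ell^k$ with basis, say, $\{e, \omega e\}$ for a generator $e$ of $E_{\ell^k}$ as an $R$-module. So $\End E_{\ell^k} \cong M_2(\bbZ/\ell^k)$, with $R$ embedded as the subring generated by the matrix of multiplication by $\omega$ in this basis. The lemma then reduces to the purely algebraic assertion: the centralizer of $R$ in $M_2(\bbZ/\ell^k)$ equals $R$. To see this, write an arbitrary $\psi \in \End E_{\ell^k}$ as a $\bbZ/\ell^k$-linear map; the condition $\psi \omega = \omega \psi$ is a system of linear congruences. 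Since $R$ acts on $E_{\ell^k}$ making it a free rank-one $R$-module, any $R$-linear endomorphism of $E_{\ell^k}$ is given by multiplication by a fixed element of $R$ (namely $\psi(e) = r e$ for some $r \in R$, and then $\psi = r$ by $R$-linearity and freeness); and conversely every element of $R$ commutes with $R$ since $R$ is commutative. The condition "$\psi x = x \psi$ for all $x \in \cO_K$" is exactly "$\psi$ is $R$-linear" because $\cO_K$ generates $R$ topologically (indeed $\cO_K \to R$ is surjective), so commuting with all of $\cO_K$ is the same as commuting with all of $R$. This gives both inclusions and hence equality.

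The one point requiring a little care, which I expect to be the main (minor) obstacle, is the structure statement that $E_{\ell^k}$ is free of rank one over $\cO_K \otimes \bbZ/\ell^k$ when $\ell$ is allowed to ramify or be non-split in $K$, and when the relevant objects live over $\overline{L}$ rather than over $\bbC$. For the split and inert cases $\cO_K \otimes \bbZ/\ell^k$ is a product of (unramified) quotients and freeness is immediate from $E_{\ell^k} \cong \cO_K/\ell^k\cO_K$; for the ramified case $\cO_K \otimes \bbZ/\ell^k$ is a local ring and one still has $E_{\ell^k} \cong \cO_K/\ell^k\cO_K$ as an $\cO_K$-module, hence free of rank one over $\cO_K \otimes \bbZ/\ell^k$. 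In all cases the isomorphism $E_{\ell^k} \cong \cO_K/\ell^k\cO_K$ of $\cO_K$-modules is standard CM theory (over $\bbC$ it is the statement about $\bbC/\cO_K$, and it descends since $E_{\ell^k}$ is defined over $\overline{L} \supset \overline{\bbQ}$), so once this is invoked the remainder is the elementary centralizer computation above.
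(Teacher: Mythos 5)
Your proof is correct and takes essentially the same approach as the paper, which simply identifies $\End E_{\ell^k}$ with $M_2(\bbZ/\ell^k)$ and leaves the centralizer computation for $\cO_K\otimes\bbZ/\ell^k$ as ``an easy calculation with two-by-two matrices.'' Your observation that $E_{\ell^k}$ is free of rank one over $R=\cO_K\otimes\bbZ/\ell^k$, so that the centralizer is $\End_R(R)=R$, is a clean conceptual substitute for that explicit matrix calculation and handles the split, inert and ramified cases uniformly.
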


\begin{proof}
As an abelian group, $E_{\ell^k}\cong (\bbZ/\ell^k)^2$, and therefore $\End E_{\ell^k}\cong M_2(\bbZ/\ell^k)$.
The proof comes down to an easy calculation with two-by-two matrices with entries in $\bbZ/\ell^k$.
\end{proof}

\begin{lemma}
\label{fixed}
Let $\ell\in\bbZ_{>0}$ be prime and let $m=m(\ell)$. Let $k\in\bbZ_{\geq 0}$ and let $\vphi\in\End E_{\ell^k}$. Then 
\begin{enumerate}
\item \label{partclassfixed}The class of $\varphi$ in $\End E_{\ell^{k}} /(\cO_K\otimes\bbZ/\ell^{k})$ is fixed by $\Gamma_{KL}$ if and only if for all $x\in\cO_K$, 
$$\ell^m(x\vphi-\vphi x)\in(\End E_{\ell^k})^+=\cO_K\otimes\bbZ/\ell^k.$$
\item \label{partendofixed}The endomorphism $\vphi$ is fixed by $\Gamma_{KL}$ if and only if
$$\ell^m\vphi\in(\End E_{\ell^k})^+=\cO_K\otimes\bbZ/\ell^k.$$

\end{enumerate}
\end{lemma}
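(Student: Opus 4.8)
The plan is to reduce both statements to concrete matrix computations over $\bbZ/\ell^k$, using the CM structure to pin down the Galois action on $\End E_{\ell^k}$. The key input is that $\Gamma_{KL}$ acts on $\overline{E}$ through $\cO_K$-linear automorphisms, so its image in $\Aut E_{\ell^k}$ lies in $(\cO_K\otimes\bbZ/\ell^k)^\times$; more precisely, via the theory of complex multiplication (Silverman, Theorems 9.1--9.2), the action of an element $\sigma\in\Gamma_{KL}$ on torsion is multiplication by $\psi_{E/KL}$ evaluated at the corresponding idele, and Definition \ref{def:m} says exactly that this unit, when acting on $E_{\ell^k}$, lies in the image of $\cO_{\ell^m}=\bbZ+\ell^m\cO_K$ and $m=m(\ell)$ is the largest such integer. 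So I would first record the statement: there is an element $u\in\cO_K\otimes\bbZ/\ell^k$ such that $\sigma$ acts on $E_{\ell^k}$ as multiplication by $u$, and moreover $u$ generates $(\cO_K\otimes\bbZ/\ell^k)/(\bbZ/\ell^k)$ up to the subgroup $\ell^m(\cO_K\otimes\bbZ/\ell^k)$; i.e. $u=a+\ell^m b\theta$ with $a,b\in\bbZ/\ell^k$ and, for a suitable choice of $\sigma$, $b$ a unit — where $\theta$ is a generator of $\cO_K$ over $\bbZ$. The action of $\sigma$ on $\vphi\in\End E_{\ell^k}$ is conjugation: $\sigma(\vphi)=u\vphi u^{-1}$ (note $u$ central in $\cO_K\otimes\bbZ/\ell^k$, so $u^{-1}$ makes sense there).

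For part \eqref{partendofixed}: $\vphi$ is fixed by $\Gamma_{KL}$ iff $u\vphi u^{-1}=\vphi$ for every such $u$, i.e. $u\vphi=\vphi u$ for all $u$ in the image of $\Gamma_{KL}$. Since every such $u$ is of the form $a+\ell^m b\theta$, and the scalars $a$ commute with everything, this is equivalent to $\ell^m b(\theta\vphi-\vphi\theta)=0$ for all the relevant $b$; choosing $\sigma$ with $b$ a unit gives $\ell^m(\theta\vphi-\vphi\theta)=0$. Conversely $\ell^m(\theta\vphi-\vphi\theta)=0$ forces $\vphi$ to commute with all of $\ell^m\cO_K\otimes\bbZ/\ell^k$ hence (together with commuting with scalars) with the whole image of $\Gamma_{KL}$. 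Now I claim $\ell^m(\theta\vphi-\vphi\theta)=0$ is equivalent to $\ell^m\vphi\in(\End E_{\ell^k})^+$: writing everything in the matrix model $\End E_{\ell^k}\cong M_2(\bbZ/\ell^k)$ with $\theta$ a fixed companion-type matrix, the commutator $[\theta,\vphi]$ vanishes after multiplication by $\ell^m$ iff $\ell^m\vphi$ commutes with $\theta$ iff (since $\cO_K\otimes\bbZ/\ell^k=\bbZ/\ell^k[\theta]$ is its own centralizer when $\ell$ is unramified, and a short direct check handles the ramified case) $\ell^m\vphi\in\cO_K\otimes\bbZ/\ell^k=(\End E_{\ell^k})^+$, the last equality being Lemma \ref{E+}. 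I expect the main technical nuisance here is the case where $\ell$ ramifies in $K$, where $\bbZ/\ell^k[\theta]$ need not be a maximal commutative subring; one resolves this by the explicit $2\times 2$ computation, exactly as in the proof of Lemma \ref{E+}, checking that any matrix commuting with $\theta$ (or doing so after scaling by $\ell^m$) is a $\bbZ/\ell^k$-polynomial in $\theta$ (resp.\ becomes one after scaling).

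For part \eqref{partclassfixed}: the class $\bar\vphi\in\End E_{\ell^k}/(\cO_K\otimes\bbZ/\ell^k)$ is fixed by $\Gamma_{KL}$ iff for every relevant $u$ we have $u\vphi u^{-1}-\vphi\in\cO_K\otimes\bbZ/\ell^k$, equivalently $u\vphi-\vphi u\in(\cO_K\otimes\bbZ/\ell^k)\,u=\cO_K\otimes\bbZ/\ell^k$ (since $u$ is a unit in that ring). As above, writing $u=a+\ell^m b\theta$ the scalar part drops out and this becomes $\ell^m b(\theta\vphi-\vphi\theta)\in\cO_K\otimes\bbZ/\ell^k$ for all the relevant $b$; picking $\sigma$ with $b$ a unit gives the stated condition $\ell^m(\theta\vphi-\vphi\theta)\in(\End E_{\ell^k})^+$ (for $x=\theta$), and since $\cO_K=\bbZ[\theta]$ and scalars commute with $\vphi$, the condition for $x=\theta$ implies it for all $x\in\cO_K$. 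The converse is immediate: if $\ell^m(x\vphi-\vphi x)\in\cO_K\otimes\bbZ/\ell^k$ for all $x\in\cO_K$, then for every $u$ in the image of $\Gamma_{KL}$, $u\vphi-\vphi u$ is an $\cO_K\otimes\bbZ/\ell^k$-combination of such commutators, hence lies in $\cO_K\otimes\bbZ/\ell^k$, so $u\vphi u^{-1}\equiv\vphi$. The one point requiring care is the assertion that there actually exists $\sigma\in\Gamma_{KL}$ whose action on $E_{\ell^k}$ is multiplication by some $u=a+\ell^m b\theta$ with $b\in(\bbZ/\ell^k)^\times$ — this is precisely the \emph{maximality} of $m=m(\ell)$ in Definition \ref{def:m} (if no such $\sigma$ existed, the Grössencharacter would land in $\cO_{\ell^{m+1}}$ on all good primes coprime to $\ell$, contradicting maximality, after using Chebotarev to realize the relevant idele classes as Frobenii); I would state this as the crux of the argument and otherwise let the $2\times 2$ matrix bookkeeping run routinely.
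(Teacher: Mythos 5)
Your proposal is correct and follows essentially the same route as the paper: describe the $\Gamma_{KL}$-action on $\End E_{\ell^k}$ as conjugation by $\psi_{E/KL}(\frq)=a+b\alpha$ with $\ord_\ell(b)\geq m$ (main theorem of CM plus the fact that Artin symbols of unramified primes generate), use the maximality of $m$ to find one such element with $\ord_\ell(b)=m$ exactly, reduce the commutator condition to the single generator of $\cO_K$ over $\bbZ$, and finish with Lemma \ref{E+}. The only cosmetic difference is your appeal to Chebotarev for the "crux" element, which is not needed: a prime witnessing $\psi_{E/KL}(\frq)\notin\cO_{\ell^{m+1}}$ exists directly by the definition of $m(\ell)$ as a maximum.
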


\begin{proof}
The action of $\Gamma_{KL}$ on $\End E_{\ell^{k}}$ factors through the abelian Galois group $\Gal(KL(E_{\ell^{k}})/KL)$. Let $\frq$ be a finite prime of $KL$ which is coprime to $\ell$ and of good reduction for $E$. The N\'{e}ron-Ogg-Shafarevich criterion tells us that $\frq$ is unramified in $KL(E_{\ell^{k}})/KL$. Since $E$ has complex multiplication by $\cO_K$, the Artin symbol $(\frq, KL(E_{\ell^{k}})/KL)$ acts on $E_{\ell^k}$ as multiplication by $\psi_{E/KL}(\frq)$. For a proof of this fact, see \cite{Lang}, Ch. 4, Corollary 1.3 (iii), for example. Therefore, the action of $(\frq, KL(E_{\ell^{k}})/KL)$ on $\End(E_{\ell^k})$ is conjugation by $\psi_{E/KL}(\frq)$. 
The Artin symbols for the unramified primes generate $\Gal(KL(E_{\ell^{k}})/KL)$.

Let $\alpha=(\Delta_K+\sqrt{\Delta_K})/2$, so $\cO_K=\bbZ[\alpha]$. Let $a, b\in\bbZ$ be such that $a+b\alpha$ is invertible in $\cO_K\otimes\bbZ/\ell^k$. Let $\vphi\in\End E_{\ell^k}$. We have
\[(a+b\alpha)\vphi-\vphi(a+b\alpha)=b(\alpha\vphi-\vphi\alpha).\]
Hence, the class of $\vphi$ in $\End E_{\ell^{k}} /(\cO_K\otimes\bbZ/\ell^{k})$ is fixed by conjugation by $a+b\alpha$ if and only if
\begin{equation}
\label{class fixed}
b(\alpha\vphi-\vphi\alpha)\in\cO_K\otimes\bbZ/\ell^k
\end{equation}
and $\vphi$ is fixed by conjugation by  $a+b\alpha$ if and only if
\begin{equation}
\label{varphi fixed}
b(\alpha\vphi-\vphi\alpha)=0.
\end{equation}

 Recall that $m=m(\ell)$ is the largest integer $t$ such that for all finite primes $\frq$ of $KL$ which are of good reduction for $E$ and coprime to $\ell$, 
$$\psi_{E/KL}(\frq)\in\cO_{\ell^t}=\bbZ+\ell^t\cO_K.$$
In other words, for a prime $\frq$ which is unramified in $KL(E_{\ell^k})/KL$, we can write \mbox{$\psi_{E/KL}(\frq)=a+b\alpha$} for some $a,b\in\bbZ$ with $\ord_{\ell}(b)=m$.
Hence, by \eqref{class fixed}, the class of $\vphi$ in $\End E_{\ell^{k}} /(\cO_K\otimes\bbZ/\ell^{k})$ is fixed by $\Gamma_{KL}$ if and only if
\begin{eqnarray*}
 \ell^m(\alpha\vphi-\vphi\alpha)\in \cO_K\otimes\bbZ/\ell^k.
\end{eqnarray*}
By \eqref{varphi fixed}, the endomorphism $\vphi$ is fixed by $\Gamma_{KL}$ if and only if
\begin{eqnarray*}
 \ell^m(\alpha\vphi-\vphi\alpha)=0.
\end{eqnarray*}
An application of Lemma \ref{E+} completes the proof.
\end{proof}

\subsection{Case I: Complex multiplication defined over the base field.}
\label{case1}

In this subsection, we compute the transcendental Brauer group of $E\times E$ in the case where the complex multiplication field $K$ is a subfield of $L$, the field of definition of $E$.
\begin{theorem}
\label{quotient1}
Suppose that $K\subseteq L$.
Let $\ell\in\bbZ_{>0}$ be prime and let $m=m(\ell)$. Then
 \begin{eqnarray*}
 \left(\frac{\Br(E\times E)}{\Br_1(E\times E)}\right)_{\ell^{\infty}}= 
\frac{\Br(E\times E)_{\ell^{m}}}{\Br_1(E\times E)_{\ell^{m}}}=\frac{\End E_{\ell^m}}{\cO_K\otimes\bbZ/\ell^m}
 \cong  (\bbZ/\ell^m)^2.
  \end{eqnarray*}
\end{theorem}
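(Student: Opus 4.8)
### Proof proposal

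The plan is to combine the structural isomorphism \eqref{quotient} of Skorobogatov--Zarhin with the field-theoretic input packaged into $m=m(\ell)$ via Lemma \ref{fixed}, then read off the group structure from a matrix computation. First, by Lemma 2.3 (the lemma immediately preceding the Case I theorem, giving $(\Br(X)/\Br_1(X))_{\ell^\infty}=\Br(X)_{\ell^\infty}/\Br_1(X)_{\ell^\infty}$) applied with $X=E\times E$, and since the transcendental quotient is a finite group killed by some power of $\ell$ in the $\ell$-primary part, it suffices to work at a single level $\ell^k$ for $k$ large and to show that the answer stabilizes at $k=m$. So I would fix $k\geq m$ and study $\Br(E\times E)_{\ell^k}/\Br_1(E\times E)_{\ell^k}$, which by \eqref{quotient} (with $E'=E$) equals $\Hom_{\Gamma_L}(E_{\ell^k},E_{\ell^k})/(\End\overline{E}\otimes\bbZ/\ell^k)^{\Gamma_L}=\End_{\Gamma_L}E_{\ell^k}/(\cO_K\otimes\bbZ/\ell^k)$, using $\End\overline{E}=\cO_K$ and, since $K\subseteq L$, the fact that $\Gamma_L$ acts trivially on $\cO_K\otimes\bbZ/\ell^k$ so the $\Gamma_L$-invariants of the denominator are the whole of $\cO_K\otimes\bbZ/\ell^k$. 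This already gives the middle equality of the theorem once we know the quotient is annihilated by $\ell^m$.

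Next I would compute $\End_{\Gamma_L}E_{\ell^k}$ explicitly. Since $K\subseteq L$ we have $\Gamma_{KL}=\Gamma_L$, so Lemma \ref{fixed}\eqref{partendofixed} applies verbatim: $\vphi\in\End E_{\ell^k}$ is fixed by $\Gamma_L$ if and only if $\ell^m\vphi\in\cO_K\otimes\bbZ/\ell^k$. Thus
\[
\frac{\End_{\Gamma_L}E_{\ell^k}}{\cO_K\otimes\bbZ/\ell^k}\;\cong\;\frac{\{\vphi\in\End E_{\ell^k}\mid \ell^m\vphi\in\cO_K\otimes\bbZ/\ell^k\}}{\cO_K\otimes\bbZ/\ell^k}.
\]
Now I would pass to matrices. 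Write $\cO_K=\bbZ[\alpha]$ with $\alpha=(\Delta_K+\sqrt{\Delta_K})/2$ as in Lemma \ref{fixed}. Under a suitable $\bbZ/\ell^k$-basis of $E_{\ell^k}$ the subring $\cO_K\otimes\bbZ/\ell^k\hookrightarrow\End E_{\ell^k}\cong M_2(\bbZ/\ell^k)$ is spanned over $\bbZ/\ell^k$ by $I$ and the companion matrix $M_\alpha$ of the minimal polynomial of $\alpha$; one checks (this is where $m=m(\ell)$ does its work, and the only place class field theory has entered, already through the definition of $m$) that an integer matrix lies in $\ell^m$-times-this-order exactly when certain off-diagonal and trace conditions hold modulo $\ell^{k-m}$. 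Concretely, $\{\vphi: \ell^m\vphi\in \cO_K\otimes\bbZ/\ell^k\}$ is a free $\bbZ/\ell^k$-module containing $\cO_K\otimes\bbZ/\ell^k$ (itself free of rank $2$) with quotient $\cong(\bbZ/\ell^m)^2$: the two ``extra'' generators can be taken to be $\ell^{k-m}E_{12}$ and $\ell^{k-m}E_{21}$ (or, invariantly, $\ell^{k-m}$ times any two endomorphisms completing $I,M_\alpha$ to a $\bbZ/\ell^k$-basis of $M_2$), each of which has order exactly $\ell^m$ modulo $\cO_K\otimes\bbZ/\ell^k$. This simultaneously shows the quotient is killed by $\ell^m$ (hence independent of $k$ for $k\geq m$, giving the first equality of the theorem) and identifies it with $(\bbZ/\ell^m)^2$.

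The main obstacle is the explicit matrix bookkeeping in the last step: one must choose the basis of $E_{\ell^k}$ compatibly with the $\cO_K$-action, describe the image of $\cO_K\otimes\bbZ/\ell^k$ inside $M_2(\bbZ/\ell^k)$ precisely enough to see that the ``$\ell^m\vphi\in\cO_K\otimes\bbZ/\ell^k$'' condition cuts out a submodule whose quotient by $\cO_K\otimes\bbZ/\ell^k$ is free of rank $2$ over $\bbZ/\ell^m$, and verify there is no unexpected extra torsion (e.g.\ a $\bbZ/\ell^{m+1}$ or a $\bbZ/\ell^m\oplus\bbZ/\ell^{m-1}$ phenomenon). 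This is essentially the ``easy calculation with two-by-two matrices'' invoked in the proof of Lemma \ref{E+}, now run one step further; care is needed only when $\ell$ divides the index $[\cO_K:\bbZ[\sqrt{\Delta_K}]]$ or ramifies/is inert in $K$, but since we have fixed a prime $\ell$ and are free to choose $\alpha$ as above, the argument is uniform. Everything else — the reduction to level $\ell^k$, the identification via \eqref{quotient}, and the $\Gamma_L$-fixed-point criterion — is already in place from the preliminaries.
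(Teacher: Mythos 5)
Your proposal is correct and follows essentially the same route as the paper: identify the quotient via \eqref{quotient}, apply Lemma \ref{fixed}\eqref{partendofixed} with $\Gamma_{KL}=\Gamma_L$, and read off $(\bbZ/\ell^m)^2$ from the $2\times 2$ matrix description of $\End E_{\ell^k}$ relative to the basis $P,\alpha P$ (the paper's proof is just a terser version of this, citing the same three ingredients). One cosmetic caveat: your specific choice of $\ell^{k-m}E_{12}$ and $\ell^{k-m}E_{21}$ as the extra generators can fail to have full order when $\ell\mid\Delta_K$, since $E_{21}\equiv -\tfrac{\Delta_K(1-\Delta_K)}{4}E_{12}-\Delta_K E_{22}$ modulo $\cO_K\otimes\bbZ/\ell^k$ is then divisible by $\ell$ in the quotient; your parenthetical fallback (any two endomorphisms completing $I,M_\alpha$ to a $\bbZ/\ell^k$-basis, e.g.\ $E_{12},E_{22}$) is the right statement.
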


\begin{proof}
By \eqref{quotient}, for all primes $\ell$ and all $k\in\bbZ_{\geq 0}$, we have 
\[\frac{\Br(E\times E)_{\ell^{k}}}{\Br_1(E\times E)_{\ell^{k}}}=\frac{\End _{\Gamma_L}E_{\ell^k}}{\cO_K\otimes\bbZ/\ell^k}.\]
Also, 
\[\frac{\End E_{\ell^k}}{\cO_K\otimes\bbZ/\ell^k}\cong (\bbZ/\ell^k)^2.\]
The result now follows from Lemma \ref{fixed}, part \ref{partendofixed}.
\end{proof}

\begin{theorem}
\label{geom1}
Suppose that $K\subseteq L$.
 Let $\ell\in\bbZ_{>0}$ be prime and let $m=m(\ell)$. Then
 \begin{eqnarray*}
 \Br(\overline{E}\times\overline{E})_{\ell^{\infty}}^{\Gamma_{L}}&=&
\left(\frac{\End E_{\ell^{m+\ceil{\ord_{\ell}(\Delta_K)/2}}}}{\cO_K\otimes\bbZ/\ell^{m+\ceil{\ord_{\ell}(\Delta_K)/2}}}\right)^{\Gamma_L}\\
&\cong & \bbZ/\ell^{m+\floor{\ord_{\ell}(\Delta_K)/2}}\times \bbZ/\ell^{m+\ceil{\ord_{\ell}(\Delta_K)/2}}.
 \end{eqnarray*}
In particular, if $\ell\nmid \Delta_K$ then
\[\Br(\overline{E}\times\overline{E})_{\ell^{\infty}}^{\Gamma_{L}}
=\frac{\End E_{\ell^m}}{\cO_K\otimes\bbZ/\ell^m}\cong (\bbZ/\ell^m)^2.\]
\end{theorem}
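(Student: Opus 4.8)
The plan is to analyze the $\Gamma_L$-module $\Br(\overline{E}\times\overline{E})_{\ell^n}$ using the Skorobogatov--Zarhin description \eqref{n-torsion}, namely $\Br(\overline{E}\times\overline{E})_{\ell^n}=\End E_{\ell^n}/(\cO_K\otimes\bbZ/\ell^n)$, and to identify exactly which classes are $\Gamma_L$-fixed for $n$ large. Since $K\subseteq L$, the action of $\Gamma_L$ on $\End E_{\ell^n}$ factors through $\Gal(L(E_{\ell^n})/L)$ and, for primes $\frq$ of good reduction coprime to $\ell$, the Artin symbol acts by conjugation by $\psi_{E/L}(\frq)$; by the definition of $m=m(\ell)$ and the fact that $\cO_K^*$-twists do not matter here (we may work with $L$ in place of $KL$ since $K\subseteq L$), these conjugating elements are precisely the units of the form $a+b\alpha$ with $\ord_\ell(b)=m$, where $\alpha=(\Delta_K+\sqrt{\Delta_K})/2$. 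So, exactly as in Lemma \ref{fixed}, the class of $\vphi\in\End E_{\ell^n}$ is $\Gamma_L$-fixed iff $\ell^m(\alpha\vphi-\vphi\alpha)\in\cO_K\otimes\bbZ/\ell^n$.

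The core of the argument is then a linear-algebra computation over $\bbZ/\ell^n$. First I would fix the $\bbZ/\ell^n$-basis of $E_{\ell^n}$ in which $\alpha$ acts by its companion matrix $\begin{pmatrix}0 & -N\\ 1 & T\end{pmatrix}$, where $T=\Tr(\alpha)=\Delta_K$ and $N=N(\alpha)$, so that $\alpha-\overline\alpha=\sqrt{\Delta_K}$ has matrix with determinant $-\Delta_K$ and hence $\ord_\ell(\det)=\ord_\ell(\Delta_K)$. Writing $\vphi$ as a general $2\times 2$ matrix, the commutator $[\alpha,\vphi]$ is a matrix whose entries are linear in the off-diagonal and diagonal-difference entries of $\vphi$; the Smith normal form of the map $\vphi\mapsto[\alpha,\vphi]$ on $\End E_{\ell^n}/(\cO_K\otimes\bbZ/\ell^n)\cong(\bbZ/\ell^n)^2$ has elementary divisors governed by $\ord_\ell(\Delta_K)$, split as $\floor{\ord_\ell(\Delta_K)/2}$ and $\ceil{\ord_\ell(\Delta_K)/2}$ because the relevant quadratic form is $\sqrt{\Delta_K}$ up to units (the two ``halves'' of $\ord_\ell(\Delta_K)$ appear asymmetrically when $\ord_\ell(\Delta_K)$ is odd). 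From this, the condition $\ell^m[\alpha,\vphi]=0$ in the quotient cuts out a subgroup isomorphic to $\bbZ/\ell^{m+\floor{\ord_\ell(\Delta_K)/2}}\times\bbZ/\ell^{m+\ceil{\ord_\ell(\Delta_K)/2}}$, provided $n$ is taken at least $m+\ceil{\ord_\ell(\Delta_K)/2}$ so that the group has stabilized; taking $n=m+\ceil{\ord_\ell(\Delta_K)/2}$ (as in the statement) suffices, and one checks the answer is independent of any larger $n$ by the usual direct-limit/inverse-limit stabilization argument for $\Br(\overline{E}\times\overline{E})_{\ell^\infty}^{\Gamma_L}$.

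The case $\ell\nmid\Delta_K$ then falls out immediately: $\ord_\ell(\Delta_K)=0$ forces $\floor{\cdot}=\ceil{\cdot}=0$, the commutator map $[\alpha,-]$ becomes an isomorphism on the quotient $(\bbZ/\ell^n)^2$ (since $\sqrt{\Delta_K}$ is a unit mod $\ell$), and the fixed condition reduces to $\ell^m\vphi\in\cO_K\otimes\bbZ/\ell^n$, i.e. exactly the group $\End E_{\ell^m}/(\cO_K\otimes\bbZ/\ell^m)\cong(\bbZ/\ell^m)^2$ of Theorem \ref{quotient1}; this also matches the fact that for $\ell\nmid\Delta_K$ there is nothing transcendental beyond level $m$.

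The main obstacle I anticipate is the bookkeeping in the Smith normal form step: correctly tracking how $\ord_\ell(\Delta_K)$ distributes as $\floor{\ord_\ell(\Delta_K)/2}+\ceil{\ord_\ell(\Delta_K)/2}$ between the two cyclic factors — in particular getting the asymmetry right when $\ell=2$ and $\ord_2(\Delta_K)$ is odd, and checking that passing from $KL$ back to $L$ (using $K\subseteq L$) does not change $\psi_{E/L}(\frq)$ modulo the relevant power of $\ell$. Once the module structure of $\Br(\overline{E}\times\overline{E})_{\ell^n}$ as a $\bbZ/\ell^n[\Gal]$-module is pinned down, extracting the invariants and passing to the limit is routine.
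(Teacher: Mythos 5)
Your proposal is correct and follows essentially the same route as the paper: reduce via the Skorobogatov--Zarhin isomorphism and Lemma \ref{fixed} to the condition $\ell^m(\alpha\vphi-\vphi\alpha)\in\cO_K\otimes\bbZ/\ell^k$, compute in the basis $P,\alpha P$ where $\alpha$ acts by its companion matrix, and read off the two cyclic factors from the $\ell$-adic valuations of the entries of the resulting linear system (the paper writes out the congruences explicitly rather than invoking Smith normal form, but the determinant $-\Delta_K$ and the case check $\ord_2(\Delta_K)\in\{0,2,3\}$, $\ord_\ell(\Delta_K)\in\{0,1\}$ are exactly the bookkeeping you identify as the main obstacle).
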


\begin{proof}
Fix a prime number $\ell\in\bbZ_{>0}$ and let $k\in\bbZ_{\geq 0}$. By \eqref{n-torsion}, we have
\[\Br(\overline{E}\times\overline{E})_{\ell^{k}}^{\Gamma_{L}}=\left(\frac{\End E_{\ell^k} }{\cO_K\otimes\bbZ/\ell^k}\right)^{\Gamma_L}.  \]
Write $\cO_K=\bbZ[\alpha]$ where $\alpha=(\Delta_K+\sqrt{\Delta_K})/2$ and let $\vphi\in\End E_{\ell^k}$. By part \ref{partclassfixed} of Lemma \ref{fixed}, the class of $\vphi$ in $\End E_{\ell^{k}} /(\cO_K\otimes\bbZ/\ell^{k})$ is fixed by $\Gamma_{L}$ if and only if 
\begin{equation}
\label{classfixed}
\ell^m(\alpha\vphi-\vphi \alpha)\in\cO_K\otimes\bbZ/\ell^k.
\end{equation}
Let $P, \alpha P$ be a $\bbZ/\ell^{k}$-basis for $E_{\ell^{k}}$. With respect to this basis, multiplication by $\alpha$ is given by the following matrix:
$$\begin{pmatrix}0& \ \ \ \ \frac{\Delta_K(1-\Delta_K)}{4}\\ 1 & \Delta_K\end{pmatrix}.$$
Subtracting an element of $\cO_K\otimes\bbZ/\ell^k$ if necessary, we may assume that $\vphi$ is of the form
$$\begin{pmatrix}0&\ \ t\\ 0 &\ \  u\end{pmatrix}$$
for some $t,u\in\bbZ/\ell^k$. In terms of matrices, equation \eqref{classfixed} becomes
\[\begin{pmatrix} -\ell^mt &\ \ \  -\ell^mt\Delta_K+\ell^m u\frac{\Delta_K(1-\Delta_K)}{4} \\-\ell^mu& \ell^m t\end{pmatrix}=\begin{pmatrix}a& \ \ \ b\frac{\Delta_K(1-\Delta_K)}{4}\\ b & a+b\Delta_K\end{pmatrix}\]
for some $a, b\in\bbZ/\ell^k$. The resulting equations reduce to
\begin{equation}
\label{uveqns}
2\ell^mt\equiv\ell^m\Delta_Kt\equiv\ell^m\Delta_K u\equiv\ell^m\frac{\Delta_K(1-\Delta_K)}{2}u\equiv 0\pmod {\ell^k}.
\end{equation}
We have $\ord_2(\Delta_K)\in\{0,2 ,3\}$ and for an odd prime $\ell$, $\ord_\ell(\Delta_K)\in\{0, 1\}$. Thus, \eqref{uveqns} can be summarised as
\[\ell^{m+\floor{\ord_{\ell}(\Delta_K)/2}}t\equiv \ell^{m+\ceil{\ord_{\ell}(\Delta_K)/2}}u\equiv 0\pmod{\ell^k}. \]
Therefore, 
\begin{eqnarray*}
\Br(\overline{E}\times\overline{E})_{\ell^{\infty}}^{\Gamma_{L}}&=&
 \Br(\overline{E}\times\overline{E})_{\ell^{m+\ceil{\ord_{\ell}(\Delta_K)/2}}}^{\Gamma_{L}}\\
&=&\left(\frac{\End E_{\ell^{m+\ceil{\ord_{\ell}(\Delta_K)/2}}}}{\cO_K\otimes\bbZ/\ell^{m+\ceil{\ord_{\ell}(\Delta_K)/2}}}\right)^{\Gamma_L}\\
&\cong & \bbZ/\ell^{m+\floor{\ord_{\ell}(\Delta_K)/2}}\times \bbZ/\ell^{m+\ceil{\ord_{\ell}(\Delta_K)/2}}.
 \end{eqnarray*}
\end{proof}

\begin{remark}
The fact that $(\Br(E\times E)/\Br_1(E\times E))_{\ell^\infty}=\Br(\overline{E}\times \overline{E})^{\Gamma_L}_{\ell^\infty}$ for $\ell\nmid \Delta_K$ also follows from Proposition 5.2 of \cite{C-T--S}. A computation of the relevant intersection pairing shows that the cokernel of the map $\Br(E\times E)/\Br_1(E\times E)\hookrightarrow \Br(\overline{E}\times \overline{E})^{\Gamma_L}$ is annihilated by the discriminant of $K$.
\end{remark}

\subsection{Case II: Complex multiplication not defined over the base field.}
\label{case2}

Throughout this subsection, we make the assumption that $K\not\subset L$. We write $\tau$ for an element of $\Gamma_L\setminus \Gamma_{KL}.$ 
We set $\alpha=(\Delta_K+\sqrt{\Delta_K})/2$, so $\cO_K=\bbZ[\alpha]$.

\begin{lemma}
\label{tau fixed}
Suppose that $K\nsubseteq L$.
Let $\ell\in\bbZ_{>0}$ be prime and let $k\in\bbZ_{\geq 0}$.  Let $a,b\in\bbZ$ and consider $(a+b\alpha)\tau$ as an element of $\End E_{\ell^k}$. Then 
\begin{enumerate}
\item \label{partclasstauKL}The class of $(a+b\alpha)\tau$ in $\End E_{\ell^k}/(\cO_K\otimes\bbZ/\ell^k)$ is fixed by $\Gamma_{KL}$ if and only if $$\ord_{\ell}(a),\ \ord_{\ell}(b)\geq k-m(\ell)-\ord_{\ell}(\Delta_K).$$
\item \label{partclasstautau}The class of $(a+b\alpha)\tau$ in $\End E_{\ell^k}/(\cO_K\otimes\bbZ/\ell^k)$ is fixed by $\tau$ if and only if
\[
\ord_{\ell}(b)\geq k-\ord_{\ell}(\Delta_K).\]
\item \label{partplus}We have $(a+b\alpha)\tau\in(\End E_{\ell^k})^+=\cO_K\otimes\bbZ/\ell^k$ if and only if
\begin{eqnarray*}
\ord_{\ell}(a)\geq k-\floor{\ord_{\ell}(\Delta_K)/2}\\
\textrm{and }\ 
\ord_{\ell}(b)\geq k-\ceil{\ord_{\ell}(\Delta_K)/2}.
\end{eqnarray*}
\item \label{parttaufixKL}We have $(a+b\alpha)\tau\in\End_{\Gamma_{KL}} E_{\ell^k}$ if and only if 
\begin{eqnarray*}
\ord_{\ell}(a)\geq k-m(\ell)-\floor{\ord_{\ell}(\Delta_K)/2}\\
\textrm{and }\ 
\ord_{\ell}(b)\geq k-m(\ell)-\ceil{\ord_{\ell}(\Delta_K)/2}.
\end{eqnarray*}
\item \label{parttaufixtau}The endomorphism $(a+b\alpha)\tau$ is fixed by the action of $\tau$ if and only if 
\[\ord_{\ell}(b)\geq k-\floor{\ord_{\ell}(\Delta_K)/2}.\]
\end{enumerate}
 
\end{lemma}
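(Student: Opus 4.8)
The idea is to set up everything in terms of explicit $2\times 2$ matrices over $\bbZ/\ell^k$, exactly as in the proof of Theorem \ref{geom1}. Fix the $\bbZ/\ell^k$-basis $P,\alpha P$ of $E_{\ell^k}$, so that multiplication by $\alpha$ is the matrix
\[
A=\begin{pmatrix}0& \ \ \ \ \frac{\Delta_K(1-\Delta_K)}{4}\\ 1 & \Delta_K\end{pmatrix},
\]
and $\cO_K\otimes\bbZ/\ell^k$ is the subring $\bbZ/\ell^k[A]$, i.e.\ the matrices of the form $\left(\begin{smallmatrix}a& b\frac{\Delta_K(1-\Delta_K)}{4}\\ b& a+b\Delta_K\end{smallmatrix}\right)$. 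The new ingredient is the element $\tau$: since $\tau$ conjugates $\cO_K$ by complex conjugation, $\tau\alpha\tau^{-1}=\overline\alpha=\Delta_K-\alpha$, so as a matrix $\tau$ acts on $\End E_{\ell^k}$ by $M\mapsto TMT^{-1}$ where $T$ is any matrix representing an element of $\Gal(\overline L/L)$ restricted appropriately; concretely one may take $T$ to be the matrix of $\tau|_{E_{\ell^k}}$, which satisfies $TAT^{-1}=\Delta_K I - A$ (this is the only property of $T$ we need, together with the fact that on $\End$ the $\Gamma_{KL}$-action is conjugation by $\psi_{E/KL}(\frq)=a'+b'\alpha$ with $\ord_\ell(b')=m$, from the proof of Lemma \ref{fixed}). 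I would first record the matrix of $(a+b\alpha)\tau$, namely $(aI+bA)T$, and compute its image under conjugation by $A$ and by $T$.

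For part \eqref{partplus}: $(a+b\alpha)\tau\in\cO_K\otimes\bbZ/\ell^k$ means $(aI+bA)T\in\bbZ/\ell^k[A]$. Writing out $(aI+bA)T$ explicitly and comparing with the general form of an element of $\bbZ/\ell^k[A]$ gives, after simplification, congruences of the shape $a\cdot(\text{unit or }\Delta_K\text{-divisible})\equiv 0$ and $b\cdot(\ldots)\equiv 0 \pmod{\ell^k}$; since $\ord_\ell(\Delta_K)\in\{0,1,2,3\}$ and we always have the factor $\Delta_K(1-\Delta_K)/4$, whose $\ell$-valuation is $\floor{\ord_\ell(\Delta_K)/2}$ resp.\ $\ceil{\ord_\ell(\Delta_K)/2}$ in the two coordinates, these collapse exactly to $\ord_\ell(a)\geq k-\floor{\ord_\ell(\Delta_K)/2}$ and $\ord_\ell(b)\geq k-\ceil{\ord_\ell(\Delta_K)/2}$. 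Part \eqref{partclasstautau} is the analogue one level up: $(a+b\alpha)\tau$ is fixed by $\tau$ modulo $\cO_K\otimes\bbZ/\ell^k$ iff $T(aI+bA)T\cdot T^{-1}-(aI+bA)T\in\bbZ/\ell^k[A]$, i.e.\ $\big(T(aI+bA)T^{-1}-(aI+bA)\big)T\in\bbZ/\ell^k[A]$; using $TAT^{-1}=\Delta_K I-A$ the bracket becomes $b(\overline\alpha-\alpha)=-b(2A-\Delta_K I)=-2bA+b\Delta_K I$, whose product with $T$ lies in $\bbZ/\ell^k[A]$ iff $b(2\alpha-\Delta_K)\tau\in\cO_K\otimes\bbZ/\ell^k$. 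Apply part \eqref{partplus} to $b(2\alpha-\Delta_K)=-b\Delta_K+2b\alpha$: the $\alpha$-coefficient is $2b$, a unit times $b$ for $\ell$ odd, and one checks the $\ell=2$ case separately using $\ord_2(\Delta_K)\in\{0,2,3\}$; the condition reduces to $\ord_\ell(b)\geq k-\ord_\ell(\Delta_K)$. Part \eqref{parttaufixtau} is the same computation but \emph{without} reducing mod $\cO_K\otimes\bbZ/\ell^k$: $(a+b\alpha)\tau$ is fixed by $\tau$ iff $b(\overline\alpha-\alpha)\tau=0$ in $\End E_{\ell^k}$, i.e.\ $(-2bA+b\Delta_K I)T=0$; since $T$ is invertible this says $2bA\equiv b\Delta_K I\pmod{\ell^k}$, and reading off entries (the $(1,2)$ entry $2bA_{12}=b\cdot\frac{\Delta_K^2(1-\Delta_K)}{2}$ must vanish, etc.) gives exactly $\ord_\ell(b)\geq k-\floor{\ord_\ell(\Delta_K)/2}$.

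For the $\Gamma_{KL}$-statements \eqref{partclasstauKL} and \eqref{parttaufixKL}: by the proof of Lemma \ref{fixed}, $\Gamma_{KL}$ acts on $\End E_{\ell^k}$ through conjugation by the elements $\psi_{E/KL}(\frq)=a'+b'\alpha$ with $\ord_\ell(b')=m$, and these generate. So $(a+b\alpha)\tau$ has its class fixed by $\Gamma_{KL}$ iff conjugation by $a'+b'\alpha$ fixes that class, and $(a'+b'\alpha)(aI+bA)T(a'+b'\alpha)^{-1}-(aI+bA)T\in\bbZ/\ell^k[A]$. Using $TA^jT^{-1}$ expansions, the left side is $(aI+bA)\big((a'+b'\overline\alpha)-(a'+b'\alpha)\big)(a'+b'\alpha)^{-1}T$ up to terms in $\bbZ/\ell^k[A]$ — note conjugating by something in $\cO_K$ moves past $(aI+bA)$ trivially but past $T$ it becomes conjugation by $\overline{a'+b'\alpha}$ — so the obstruction is $b'(aI+bA)(\overline\alpha-\alpha)(a'+b'\alpha)^{-1}T$. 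Since $a'+b'\alpha$ is a unit, this lies in $\bbZ/\ell^k[A]$ iff $b'(aI+bA)(\overline\alpha-\alpha)T$ does, i.e.\ iff $\ell^m(aI+bA)(2A-\Delta_K I)T\in\bbZ/\ell^k[A]$; expanding $(aI+bA)(2A-\Delta_K I)$ and appealing to part \eqref{partplus} (now with the extra factor $\ell^m$, which effectively replaces $k$ by $k-m$) yields $\ord_\ell(a),\ord_\ell(b)\geq k-m-\ord_\ell(\Delta_K)$ after combining the two coordinate conditions — the weaker of the floor/ceiling bounds gets absorbed into the $-\ord_\ell(\Delta_K)$ because of the cross term $2abA$. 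Part \eqref{parttaufixKL} is the analogue without the quotient: the obstruction $\ell^m b'(aI+bA)(\overline\alpha-\alpha)=0$ in $\End E_{\ell^k}$, read off entrywise, splits into an $a$-condition with a $\floor{\cdot}$ loss and a $b$-condition with a $\ceil{\cdot}$ loss, both shifted by $m$. The main obstacle is bookkeeping: keeping straight which conjugations pass through $T$ (and thus flip $\alpha\leftrightarrow\overline\alpha$) versus through $aI+bA$ (and are inert), and correctly extracting the $\ell$-adic valuations of the entries of products like $(aI+bA)(2A-\Delta_K I)$ in all the small cases $\ord_\ell(\Delta_K)\in\{0,1,2,3\}$, $\ell=2$ versus $\ell$ odd. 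None of this is deep, but it must be done carefully and uniformly.
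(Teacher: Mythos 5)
Your plan is correct and follows essentially the same route as the paper's proof: everything rests on the relation $\tau\alpha\tau^{-1}=\overline{\alpha}$ (equivalently $\alpha\tau-\tau\alpha=\sqrt{\Delta_K}\,\tau$), on Lemma \ref{fixed} for the $\Gamma_{KL}$-statements, and on Lemma \ref{E+} to convert membership in $\cO_K\otimes\bbZ/\ell^k$ into a commutator computation, followed by $\ell$-adic valuation bookkeeping in the basis $1,\alpha$. The only real divergence is that for parts \eqref{partclasstauKL} and \eqref{partclasstautau} you feed $(a+b\alpha)\sqrt{\Delta_K}$ into part \eqref{partplus} and then collapse the floor/ceiling conditions -- which does work, but requires the identity $\Delta_K(2a+b\Delta_K)+2\bigl(b\tfrac{\Delta_K(1-\Delta_K)}{2}-a\Delta_K\bigr)=b\Delta_K$ to untangle, with some care at $\ell=2$ -- whereas the paper takes the commutator with $\alpha$ once more to land directly on the single congruence $\ell^m\Delta_K(a+b\alpha)\tau\equiv 0\pmod{\ell^k}$, which splits coordinatewise into $\ord_{\ell}(a),\ord_{\ell}(b)\geq k-m-\ord_{\ell}(\Delta_K)$ with no case analysis.
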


\begin{proof}
Write $m=m(\ell)$. 
\begin{enumerate}
\item By part \ref{partclassfixed} of Lemma \ref{fixed}, the class of $(a+b\alpha)\tau$ in \mbox{$\End E_{\ell^k}/(\cO_K\otimes\bbZ/\ell^k)$} is fixed by $\Gamma_{KL}$ if and only if 
\begin{equation}
\label{classfix}
\ell^m(a+b\alpha)(\alpha\tau-\tau\alpha)=\ell^m\sqrt{\Delta_K}(a+b\alpha)\tau\in(\End E_{\ell^k})^+.
\end{equation}
By the definition of $(\End E_{\ell^k})^+$, \eqref{classfix} shows that the class of $(a+b\alpha)\tau$ in $\End E_{\ell^k}/(\cO_K\otimes\bbZ/\ell^k)$ is fixed by $\Gamma_{KL}$ if and only if 
\[\ell^m\sqrt{\Delta_K}(a+b\alpha)(\alpha\tau-\tau\alpha)=\ell^m\Delta_K(a+b\alpha)\tau\equiv 0\pmod{\ell^k}.\]

\item 
The class of $(a+b\alpha)\tau$ in $\End E_{\ell^k}/(\cO_K\otimes\bbZ/\ell^k)$ is fixed by $\tau$ if and only if 
\begin{equation}
\label{fixedbytau}
(a+b\alpha)\tau-\tau(a+b\alpha)\tau\tau^{-1}=b\sqrt{\Delta_K}\tau\in \cO_K\otimes\bbZ/\ell^k.
\end{equation}
By Lemma \ref{E+}, $\cO_K\otimes\bbZ/\ell^k=(\End E_{\ell^k})^+$. So, by \eqref{fixedbytau} and the definition of $(\End E_{\ell^k})^+$, the class of $(a+b\alpha)\tau$ in $\End E_{\ell^k}/(\cO_K\otimes\bbZ/\ell^k)$ is fixed by $\tau$ if and only if 
\[\alpha b\sqrt{\Delta_K}\tau-b\sqrt{\Delta_K}\tau\alpha=b\Delta_K\tau\equiv 0\pmod{\ell^k}.\]

\item
By definition of $(\End E_{\ell^k})^+$, we have
\begin{eqnarray*}
(a+b\alpha)\tau\in(\End E_{\ell^k})^+ \iff
 (a+b\alpha)(\alpha\tau-\tau\alpha)
\equiv 0\pmod{\ell^k}.
\end{eqnarray*}
Expanding $(a+b\alpha)(\alpha\tau-\tau\alpha)$ gives
\[(a+b\alpha)(\alpha\tau-\tau\alpha)=\Bigl(b\frac{\Delta_K(1-\Delta_K)}{2}-\Delta_K a+(2a+b\Delta_K)\alpha\Bigr)\tau.\]
The conditions of part \ref{partplus} are precisely those arising from
\[b\frac{\Delta_K(1-\Delta_K)}{2}-\Delta_K a\equiv 2a+b\Delta_K\equiv 0\pmod{\ell^k}. \]

\item
By part \ref{partendofixed} of Lemma \ref{fixed}, 
\[(a+b\alpha)\tau\in\End_{\Gamma_{KL}} E_{\ell^k}\iff \ell^m(a+b\alpha)\tau\in(\End E_{\ell^k})^+.\]
Now apply part \ref{partplus} of Lemma \ref{tau fixed}.

\item
The endomorphism $(a+b\alpha)\tau$ is fixed by the action of $\tau$ if and only if
\begin{equation}
(a+b\alpha)\tau-\tau(a+b\alpha)\tau\tau^{-1}=b\sqrt{\Delta_K}\tau\equiv0\pmod{\ell^k}.
\end{equation}
It is easily seen that $b\sqrt{\Delta_K}\equiv 0\pmod{\ell^k}$ if and only if 
\[\ord_{\ell}(b)\geq k-\floor{\ord_{\ell}(\Delta_K)/2}.\]
\end{enumerate}
\end{proof}

\begin{theorem}
\label{geom2}
Suppose that $K\nsubseteq L$ and let $\ell\in\bbZ_{>0}$ be prime. Let $m=m(\ell)$ and let $k=m+\ord_{\ell}(\Delta_K)$. Let $\theta$ denote the image of $\tau$ in the quotient group $\End E_{\ell^{k}}/(\cO_K\otimes\bbZ/\ell^{k})$. Then $$\Br(\overline{E}\times\overline{E})_{\ell^{\infty}}^{\Gamma_{KL}}=\cO_{K}\theta$$ and
\begin{eqnarray*}
 \Br(\overline{E}\times\overline{E})_{\ell^{\infty}}^{\Gamma_{L}}&=&\cO_{\ell^m}\theta\cong\begin{cases}
\bbZ/\ell^{k} & \textrm{if $\ell$ is odd or $\ell\nmid\Delta_K$}\\
\bbZ/2^{k-1}\times\bbZ/2 & \textrm{if $\ell=2$ and $2\mid\Delta_K$.}
\end{cases}
\end{eqnarray*}

\end{theorem}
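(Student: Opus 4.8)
The plan is to compute $\Br(\overline{E}\times\overline{E})_{\ell^{\infty}}^{\Gamma_{KL}}$ and $\Br(\overline{E}\times\overline{E})_{\ell^{\infty}}^{\Gamma_{L}}$ separately, using the concrete description \eqref{n-torsion} together with the explicit conditions collected in Lemma \ref{tau fixed}. First I would observe that since $K\nsubseteq L$ and $\End\overline{E}=\cO_K$, the ring $\End E_{\ell^{k}}$ is generated over $\cO_K\otimes\bbZ/\ell^k$ by the single element $\tau$, which acts as complex conjugation on $\cO_K$; concretely $\tau^2\in(\cO_K\otimes\bbZ/\ell^k)^*$ and $\alpha\tau=\tau\bar\alpha$, so every element of $\End E_{\ell^k}$ is of the form $\beta+\gamma\tau$ with $\beta,\gamma\in\cO_K\otimes\bbZ/\ell^k$. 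Modulo $\cO_K\otimes\bbZ/\ell^k$, the quotient $\End E_{\ell^k}/(\cO_K\otimes\bbZ/\ell^k)$ is therefore the cyclic $\cO_K\otimes\bbZ/\ell^k$-module generated by the image $\theta$ of $\tau$, so one only has to track elements $(a+b\alpha)\theta$ with $a,b\in\bbZ/\ell^k$.

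Next I would fix $k=m+\ord_\ell(\Delta_K)$, which is large enough to see the whole $\ell$-primary part: by part \ref{partclasstauKL} of Lemma \ref{tau fixed}, the class of $(a+b\alpha)\tau$ is $\Gamma_{KL}$-fixed precisely when $\ord_\ell(a),\ord_\ell(b)\geq k-m-\ord_\ell(\Delta_K)=0$, i.e.\ always. Hence $\Br(\overline{E}\times\overline{E})_{\ell^{\infty}}^{\Gamma_{KL}}=\Br(\overline{E}\times\overline{E})_{\ell^{k}}^{\Gamma_{KL}}$ is all of $\cO_K\theta$ (one should also check, via Lemma \ref{tau fixed}\ref{parttaufixKL} at level $\ell^{k+1}$, that nothing of higher order survives, so the $\ell^k$-torsion genuinely exhausts the $\ell$-primary part). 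To pass from $\Gamma_{KL}$ to $\Gamma_L$, I would take the further invariants under $\tau$: part \ref{partclasstautau} of Lemma \ref{tau fixed} says $(a+b\alpha)\theta$ is $\tau$-fixed exactly when $\ord_\ell(b)\geq k-\ord_\ell(\Delta_K)=m$, i.e.\ when $b\equiv 0\pmod{\ell^m}$ after cancelling; equivalently $a+b\alpha\in\bbZ+\ell^m\cO_K=\cO_{\ell^m}$. This gives $\Br(\overline{E}\times\overline{E})_{\ell^{\infty}}^{\Gamma_L}=\cO_{\ell^m}\theta$.

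Finally I would identify the group structure of $\cO_{\ell^m}\theta$ as a quotient of $\cO_{\ell^m}$. The annihilator of $\theta$ in $\cO_K\otimes\bbZ/\ell^k$ is $\ell^k\cO_K$ (since $\theta$ generates a free rank-one piece modulo $\cO_K$), so $\cO_{\ell^m}\theta\cong\cO_{\ell^m}/(\cO_{\ell^m}\cap\ell^k\cO_K)$. Writing $\cO_{\ell^m}=\bbZ+\ell^m\bbZ\alpha$ and intersecting with $\ell^k\cO_K=\ell^k\bbZ+\ell^k\bbZ\alpha$, one gets $\cO_{\ell^m}\cap\ell^k\cO_K=\ell^k\bbZ+\ell^k\bbZ\alpha$ when $\ell$ is odd or $\ell\nmid\Delta_K$, giving $\cO_{\ell^m}/(\cdots)\cong\bbZ/\ell^k$ (cyclic, generated by the class of $1$, with $\ell^m\alpha$ a multiple of it up to the relation $\alpha^2=\Delta_K\alpha-\tfrac{\Delta_K(\Delta_K-1)}{4}$, which when $\ell\nmid\Delta_K$ makes $\ell^m\alpha$ expressible in terms of $1$). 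When $\ell=2$ and $2\mid\Delta_K$ the relation degenerates: $2^m\alpha$ has order $2^{k-1}$ rather than being absorbed, and a short computation with the structure constants shows the group is $\bbZ/2^{k-1}\times\bbZ/2$.

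The main obstacle I anticipate is the last step: pinning down the module structure of $\cO_{\ell^m}\theta$ correctly in the ramified dyadic case $\ell=2\mid\Delta_K$, where $\ord_2(\Delta_K)\in\{2,3\}$ and $k=m+\ord_2(\Delta_K)$, so one must carefully use the explicit matrix for multiplication by $\alpha$ (namely $\left(\begin{smallmatrix}0&\Delta_K(1-\Delta_K)/4\\ 1&\Delta_K\end{smallmatrix}\right)$) to compute the order of $2^m\alpha\cdot\theta$ and verify it does not combine with the class of $\theta$ into a single cyclic factor. Everything else is a routine unwinding of Lemma \ref{tau fixed} and the isomorphism \eqref{n-torsion}; the subtlety is entirely in the $2$-adic arithmetic of the non-maximal order $\cO_{2^m}$ inside $\cO_K$.
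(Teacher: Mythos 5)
Your overall strategy is the paper's: reduce everything to the submodules $\cO_K\theta$ and $\cO_{\ell^m}\theta$ and read the answer off Lemma \ref{tau fixed}. However, two of your structural claims fail precisely in the ramified case $\ell\mid\Delta_K$, which is the only case where the statement says anything beyond what Theorem \ref{geom1} already gives. First, it is not true that every element of $\End E_{\ell^k}$ is of the form $\beta+\gamma\tau$ with $\beta,\gamma\in\cO_K\otimes\bbZ/\ell^k$: the two pieces meet nontrivially, since by part \ref{partplus} of Lemma \ref{tau fixed} the set of elements $(a+b\alpha)\tau$ lying in $\cO_K\otimes\bbZ/\ell^k$ has exactly $\ell^{\ord_\ell(\Delta_K)}$ elements, so $(\cO_K\otimes\bbZ/\ell^k)+(\cO_K\otimes\bbZ/\ell^k)\tau$ has $\ell^{4k-\ord_\ell(\Delta_K)}<\ell^{4k}=\#M_2(\bbZ/\ell^k)$ elements when $\ell\mid\Delta_K$. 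Consequently $\End E_{\ell^k}/(\cO_K\otimes\bbZ/\ell^k)$ is strictly larger than $\cO_K\theta$, and your argument only establishes the inclusion $\cO_K\theta\subseteq(\End E_{\ell^k}/(\cO_K\otimes\bbZ/\ell^k))^{\Gamma_{KL}}$, not the reverse. The paper closes this by a cardinality comparison: Theorem \ref{geom1} applied over $KL$ shows the invariant subgroup has order $\ell^{2m+\ord_\ell(\Delta_K)}$, while part \ref{partplus} of Lemma \ref{tau fixed} shows $\cO_K\theta$ has the same order.

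Second, the annihilator of $\theta$ in $\cO_K\otimes\bbZ/\ell^k$ is not $\ell^k\cO_K$ when $\ell\mid\Delta_K$; by part \ref{partplus} of Lemma \ref{tau fixed} it is $\ell^{k-\floor{\ord_\ell(\Delta_K)/2}}\bbZ+\ell^{k-\ceil{\ord_\ell(\Delta_K)/2}}\bbZ\alpha$. With your annihilator, the final quotient in the odd ramified case would be $(\bbZ+\ell^m\bbZ\alpha)/(\ell^k\bbZ+\ell^k\bbZ\alpha)\cong\bbZ/\ell^k\times\bbZ/\ell$ rather than $\bbZ/\ell^k$, and the quadratic relation for $\alpha$ is a ring identity that cannot collapse this as an abelian group. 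What actually removes the extra factor is that $\ell^m\alpha\theta=0$ already (take $a=0$ and $b=\ell^{k-1}$ in part \ref{partplus}); in the dyadic case the independence of the conditions on $a$ and $b$ in part \ref{partplus} is exactly what yields the direct sum $\bbZ/2^{k-1}\times\bbZ/2$. So the structure computation should be run entirely through part \ref{partplus} of Lemma \ref{tau fixed}, not through your annihilator claim. (A minor further slip: the check that no classes of order greater than $\ell^k$ survive should invoke part \ref{partclasstauKL}, which concerns classes, rather than part \ref{parttaufixKL}, which concerns endomorphisms.)
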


\begin{proof}
Since $\ord_\ell(\Delta_K)\geq \ceil{\ord_\ell(\Delta_K)/2}$, applying Theorem \ref{geom1} to $KL$ gives
\begin{eqnarray}
\Br(\overline{E}\times\overline{E})_{\ell^{\infty}}^{\Gamma_{KL}}&=&
\Br(\overline{E}\times\overline{E})_{\ell^{k}}^{\Gamma_{KL}}=
(\End E_{\ell^k}/(\cO_K\otimes\bbZ/\ell^k))^{\Gamma_{KL}}\\
\label{size}
&\cong& \bbZ/\ell^{m+\floor{\ord_{\ell}(\Delta_K)/2}}\times \bbZ/\ell^{m+\ceil{\ord_{\ell}(\Delta_K)/2}}.
\end{eqnarray}
By part \ref{partclasstauKL} of Lemma \ref{tau fixed}, 
$$\cO_K\theta\subset (\End E_{\ell^k}/(\cO_K\otimes\bbZ/\ell^k))^{\Gamma_{KL}}.$$
Using part \ref{partplus} of Lemma \ref{tau fixed} to count the number of elements in $\cO_K\theta$ and comparing to \eqref{size} gives 
$$\cO_K\theta= (\End E_{\ell^k}/(\cO_K\otimes\bbZ/\ell^k))^{\Gamma_{KL}}.$$
Now part \ref{partclasstautau} of Lemma \ref{tau fixed} shows that 
$$\cO_{\ell^m}\theta= (\End E_{\ell^k}/(\cO_K\otimes\bbZ/\ell^k))^{\Gamma_{L}}.$$
Moreover, since $\ord_\ell(\Delta_K)\leq 1$ for an odd prime $\ell$, part \ref{partplus} of Lemma \ref{tau fixed} gives $\cO_{\ell^m}\theta\cong\bbZ/\ell^k$ if $\ell$ is odd or $\ell\nmid\Delta_K$. If $\ell=2$ and $2\mid\Delta_K$, then part \ref{partplus} of  Lemma \ref{tau fixed} gives $\cO_{2^m}\theta\cong\bbZ/2^{k-1}\times\bbZ/2$.
\end{proof}

\begin{theorem}
\label{quotient2}
Suppose that $K\nsubseteq L$ and let $\ell\in\bbZ_{>0}$ be prime. Let $m=m(\ell)$. Let $\eta$ denote the image of $\tau$ in the quotient group $\End E_{\ell^m}/(\cO_K\otimes\bbZ/\ell^m)$. Then
 \begin{eqnarray*}
 \left(\frac{\Br(E\times E)}{\Br_1(E\times E)}\right)_{\ell^{\infty}}= \frac{\Br(E\times E)_{\ell^{m}}}{\Br_1(E\times E)_{\ell^{m}}}=\frac{\End_{\Gamma_L}E_{\ell^m}}{(\cO_K\otimes\bbZ/\ell^m)^{\Gamma_L}}=(\bbZ/\ell^m)\eta \cong\bbZ/\ell^m
  \end{eqnarray*}
unless $\ell=2$, $2\mid\Delta_K$, $m\geq 1$ and $E_2=E_2(L)$, in which case
\begin{eqnarray*} \left(\frac{\Br(E\times E)}{\Br_1(E\times E)}\right)_{2^{\infty}}=\frac{\Br(E\times E)_{2^{m+1}}}{\Br_1(E\times E)_{2^{m+1}}}&=&\frac{\End_{\Gamma_L} E_{2^{m+1}}}{(\cO_K\otimes\bbZ/2^{m+1})^{\Gamma_L}}\\
&\cong & \bbZ/2^m\times\bbZ/2
\end{eqnarray*}
where the copy of $\bbZ/2^m$ is generated by the image of $\tau$.
\end{theorem}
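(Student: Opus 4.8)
The plan is to combine the canonical isomorphism \eqref{quotient} with the explicit endomorphism calculations already assembled in Lemma \ref{tau fixed}. By \eqref{quotient}, for every $k\in\bbZ_{\geq 0}$ we have $\Br(E\times E)_{\ell^k}/\Br_1(E\times E)_{\ell^k}=\End_{\Gamma_L}E_{\ell^k}/(\cO_K\otimes\bbZ/\ell^k)^{\Gamma_L}$. Since $K\nsubseteq L$, the group $\Gamma_L$ acts on $\End E_{\ell^k}=\End E_{\ell^k}$ via $\Gamma_{KL}$ together with the extra element $\tau$, and the nontrivial action of $\tau$ on $\cO_K\otimes\bbZ/\ell^k$ (sending $\alpha$ to $\Delta_K-\alpha$, i.e. conjugation) means $(\cO_K\otimes\bbZ/\ell^k)^{\Gamma_L}=(\bbZ+\ell^{k-\floor{\ord_\ell(\Delta_K)/2}}\cO_K)/\ell^k$ by a short computation (essentially part \ref{partplus}/\ref{parttaufixtau} of Lemma \ref{tau fixed} applied to elements of $\cO_K$ itself). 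First I would pin down, for $k=m$, the numerator $\End_{\Gamma_L}E_{\ell^m}$. Parts \ref{parttaufixKL} and \ref{parttaufixtau} of Lemma \ref{tau fixed}, with $k=m$, force the $\frak a$-coefficient and $b$-coefficient conditions $\ord_\ell(a)\geq -\floor{\ord_\ell(\Delta_K)/2}$ and $\ord_\ell(b)\geq -\ceil{\ord_\ell(\Delta_K)/2}$ and $\ord_\ell(b)\geq -\floor{\ord_\ell(\Delta_K)/2}$, which for $k=m$ are automatically satisfied whenever the right-hand sides are $\leq 0$; one must also add the endomorphisms of the form $c\cdot\id$ and the $\Gamma_{KL}$-fixed part of $\End E_{\ell^m}$, which by Theorem \ref{geom1} applied to $KL$ (with its own $m$-value, noting $m(\ell)$ relative to $KL$ equals $m(\ell)$) is just $\cO_K\otimes\bbZ/\ell^m$ when $\ell\nmid\Delta_K$.

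The cleanest route is actually to read off $\Br(\overline E\times\overline E)^{\Gamma_L}_{\ell^\infty}$ and then intersect with the image of \eqref{quotient}. Specifically, from Theorem \ref{geom2}, $\Br(\overline E\times\overline E)^{\Gamma_L}_{\ell^\infty}=\cO_{\ell^m}\theta$ inside $\End E_{\ell^k}/(\cO_K\otimes\bbZ/\ell^k)$ with $k=m+\ord_\ell(\Delta_K)$, and the subgroup coming from $\Br(E\times E)/\Br_1(E\times E)$ is the image of $\End_{\Gamma_L}E_{\ell^k}$ modulo $(\cO_K\otimes\bbZ/\ell^k)^{\Gamma_L}$ rather than modulo $(\cO_K\otimes\bbZ/\ell^k)^{\Gamma_L}$-- wait, precisely: \eqref{quotient} says the group is $\End_{\Gamma_L}E_{\ell^k}/(\Hom(\overline E,\overline E)\otimes\bbZ/\ell^k)^{\Gamma_L}$, and $\Hom(\overline E,\overline E)=\cO_K$. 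So I would compute $\End_{\Gamma_L}E_{\ell^k}$ using parts \ref{parttaufixKL}, \ref{parttaufixtau} of Lemma \ref{tau fixed} (an endomorphism fixed by $\Gamma_L$ is fixed by $\Gamma_{KL}$ and by $\tau$; write a general element as $(\text{element of }\End_{\Gamma_{KL}}E_{\ell^k})$, but more efficiently use that $\End_{\Gamma_{KL}}E_{\ell^k}$ is generated over $\cO_K\otimes\bbZ/\ell^k$ by suitable multiples of $\tau$ and apply the $\tau$-fixity conditions), then quotient by $(\cO_K\otimes\bbZ/\ell^k)^{\Gamma_L}$ and show the result is $(\bbZ/\ell^m)\eta$, of order $\ell^m$, in the generic case, and $\bbZ/2^m\times\bbZ/2$ in the exceptional case. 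For the exceptional case $\ell=2$, $2\mid\Delta_K$, $m\geq 1$, $E_2=E_2(L)$: here $E_2=E_2(L)$ means $\Gamma_L$ acts trivially on $E_2$, which via the CM description forces $\psi_{E/KL}$ to be $\equiv$ a rational integer mod $2$ in a strong enough sense that an extra class of order $2$ survives; the image of $\tau$ in $\End E_{2^{m+1}}/(\cO_K\otimes\bbZ/2^{m+1})$ contributes one $\bbZ/2^m$ factor (by part \ref{parttaufixtau} with $k=m+1$, $\ord_2(\Delta_K)\in\{2,3\}$, giving $\ord_2(b)\geq m+1-\floor{\ord_2(\Delta_K)/2}$, i.e. $\tau$ itself is not fixed but $2\tau$ might be... so the order is controlled), and there is a second independent $\bbZ/2$ coming from an endomorphism not of the form $(\text{scalar})\tau$.

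The two steps I expect to be genuinely delicate are: (i) correctly identifying $(\cO_K\otimes\bbZ/\ell^k)^{\Gamma_L}$ — the subtlety is that $\Gamma_L$ acts on $\cO_K$ through $\Gal(K/\bbQ)$ (complex conjugation), not trivially, so the invariants are strictly smaller than $\cO_K\otimes\bbZ/\ell^k$ when $\ell\mid\Delta_K$, and getting the $\floor{\cdot/2}$ right matters for the $2$-adic bookkeeping; and (ii) the case analysis at $\ell=2$, where $\ord_2(\Delta_K)\in\{2,3\}$ produces the off-by-one discrepancies between $\floor{\ord_2(\Delta_K)/2}$ and $\ceil{\ord_2(\Delta_K)/2}$ and where the hypothesis $E_2=E_2(L)$ has to be translated into a constraint on $m=m(2)$ and on which classes become $\Gamma_L$-fixed at level $2^{m+1}$ but not $2^m$. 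The main obstacle is (ii): showing that the exceptional extra $\bbZ/2$ really appears exactly when $E_2=E_2(L)$ and not merely when $2\mid\Delta_K$ and $m\geq 1$, which requires relating triviality of the $\Gamma_L$-action on $E_2$ to the $2$-adic valuation of $\psi_{E/KL}(\frq)-\overline{\psi_{E/KL}(\frq)}=b\sqrt{\Delta_K}$ and thereby to the precise count in part \ref{parttaufixtau} of Lemma \ref{tau fixed} at level $k=m+1$. Everything else — the isomorphism type $(\bbZ/\ell^m)\eta\cong\bbZ/\ell^m$ and the displayed chain of equalities $\Br(E\times E)_{\ell^m}/\Br_1(E\times E)_{\ell^m}=\End_{\Gamma_L}E_{\ell^m}/(\cO_K\otimes\bbZ/\ell^m)^{\Gamma_L}$ — is a direct application of \eqref{quotient} and the earlier lemmas, with the first equality following from the definition of $m(\ell)$ via Lemma \ref{fixed} exactly as in the proof of Theorem \ref{quotient1}.
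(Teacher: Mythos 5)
Your framework is the paper's: the displayed equalities come from \eqref{quotient}; the upper bound comes from embedding $\Br(E\times E)_{\ell^\infty}/\Br_1(E\times E)_{\ell^\infty}$ both into $\Br(\overline{E}\times\overline{E})_{\ell^\infty}^{\Gamma_L}=\cO_{\ell^m}\theta$ (Theorem \ref{geom2}) and into the $\Gamma_{KL}$-quotient (Theorem \ref{quotient1} applied to $KL$); and the lower bound comes from checking via parts \ref{parttaufixKL} and \ref{parttaufixtau} of Lemma \ref{tau fixed} that $\tau$ lies in $\End_{\Gamma_L}E_{\ell^m}$ and that $\eta$ has order $\ell^m$. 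This does settle the case where $\ell$ is odd or $\ell\nmid\Delta_K$, since there the two bounds meet at $\bbZ/\ell^m$.

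The gap is the case $\ell=2$, $2\mid\Delta_K$, $m\geq 1$, which is where the content of the dichotomy lies: the sandwich only yields ``$\bbZ/2^m$ or $\bbZ/2^m\times\bbZ/2$'', and you never decide which, in either direction. You assert that an extra $\bbZ/2$ appears when $E_2=E_2(L)$ (via a vague appeal to $\psi_{E/KL}$ being ``a rational integer mod $2$ in a strong enough sense'') and you explicitly label the converse ``the main obstacle'' without supplying an argument. The forward direction has a short proof you should find: if $E_2=E_2(L)$ then $\Gamma_L$ acts trivially on $E_2$, so $\Br(E\times E)_2/\Br_1(E\times E)_2=\End E_2/(\cO_K\otimes\bbZ/2)\cong(\bbZ/2)^2$, which rules out a cyclic answer. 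The converse is the delicate part, and Lemma \ref{tau fixed} alone cannot deliver it because that lemma only treats endomorphisms of the exact form $(a+b\alpha)\tau$, whereas the candidate elements for the extra $\bbZ/2$ are $\Gamma_L$-fixed endomorphisms that only become multiples of $\tau$ after adding a correction term $z+w\sqrt{-d}\in\cO_K\otimes\bbZ/2^{k+t}$ that is not itself $\Gamma_L$-fixed; the $\tau$-invariance condition then couples the two pieces. Concretely, the paper writes an arbitrary $\vphi\in\End_{\Gamma_L}E_{2^{k+t}}$ as $2^t(x+2^my\sqrt{-d})\tau+z+w\sqrt{-d}$ (using that the injection $\cO_{2^m}\theta\hookrightarrow(\End E_{2^{k+t}}/(\cO_K\otimes\bbZ/2^{k+t}))^{\Gamma_L}$ is onto), imposes $\tau$-invariance to get $2^{m+t}y\tau+w\equiv 0\pmod{2^{m+t+1}}$, and shows that $y\not\equiv 0\pmod 2$ would force $\tau$ to act as the identity on $E_2$ and hence $E_2=E_2(L)$; so if $E_2\neq E_2(L)$ then $y$ is even, the class of $\vphi$ is a multiple of the class of $\tau$, and the group is cyclic. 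Without this computation (or a substitute for it), the statement distinguishing the two subcases is not proved.
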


\begin{proof}
Let $k=m+\ord_{\ell}(\Delta_K)$ and let $\theta$ denote the image of $\tau$ in the quotient group $\End E_{\ell^{k}}/(\cO_K\otimes\bbZ/\ell^{k})$. Then 
\begin{equation}
\label{quotient in geom}
\frac{\Br(E\times E)_{\ell^{\infty}}}{\Br_1(E\times E)_{\ell^{\infty}}}\hookrightarrow  \Br(\overline{E}\times\overline{E})_{\ell^{\infty}}^{\Gamma_{L}}=\cO_{\ell^m}\theta,
\end{equation}
by Theorem \ref{geom2}.
For all $t\in\bbZ_{\geq 0}$, 
\begin{equation}
\label{LfixedinKLfixed}
\frac{\Br(E\times E)_{\ell^t}}{\Br_1(E\times E)_{\ell^{t}}}=\frac{\End_{\Gamma_L}E_{\ell^t}}{(\cO_K\otimes\bbZ/\ell^t)^{\Gamma_L}}\hookrightarrow \frac{\End_{\Gamma_{KL}}E_{\ell^t}}{\cO_K\otimes\bbZ/\ell^t}.
\end{equation}
First suppose that $\ell$ is odd or $\ell\nmid\Delta_K$.
Then \eqref{quotient in geom} and \eqref{LfixedinKLfixed} combined with Theorems \ref{quotient1} and \ref{geom2} show that 
\begin{equation}
\label{ell^m}
\Bigl(\frac{\Br(E\times E)}{\Br_1(E\times E)}\Bigr)_{\ell^{\infty}}\hookrightarrow \bbZ/\ell^m.
\end{equation}

Consider $\tau$ as an element of $\End E_{\ell^m}$. By parts \ref{parttaufixKL} and \ref{parttaufixtau} of Lemma \ref{tau fixed}, \mbox{$\tau\in \End_{\Gamma_L} E_{\ell^m}$.} By part \ref{partplus} of Lemma \ref{tau fixed}, $\eta$ has order $\ell^m$ in 
\[\End_{\Gamma_L} E_{\ell^{m}}/(\cO_K\otimes\bbZ/\ell^m)^{\Gamma_L}=\Br(E\times E)_{\ell^m}/\Br_1(E\times E)_{\ell^m}.\] 
Hence, by \eqref{ell^m},
$$(\bbZ/\ell^m)\eta= \frac{\End_{\Gamma_L} E_{\ell^{m}}}{(\cO_K\otimes\bbZ/\ell^m)^{\Gamma_L}}=\Bigl(\frac{\Br(E\times E)}{\Br_1(E\times E)}\Bigr)_{\ell^{\infty}}.$$

Now suppose that $\ell=2$ and $2\mid\Delta_K$.
If $m(2)=0$, then $(\Br(E\times E)/\Br_1(E\times E))_{2^{\infty}}=0,$ by \eqref{LfixedinKLfixed} and Theorem \ref{quotient1} applied to $KL$. So 
we assume from now on that $m=m(2)\geq 1$.
Theorems \ref{quotient1} and \ref{geom2} combined with \eqref{quotient in geom} and \eqref{LfixedinKLfixed} show that
\begin{equation} 
\label{2power}
\left(\frac{\Br(E\times E)}{\Br_1(E\times E)}\right)_{2^{\infty}}\hookrightarrow \bbZ/2^m\times\bbZ/2.
\end{equation}
By parts \ref{partplus}, \ref{parttaufixKL} and \ref{parttaufixtau} of Lemma \ref{tau fixed}, the image of $\tau$ generates a copy of $\bbZ/2^m$ inside  
$\End_{\Gamma_L} E_{2^{m+1}}/(\cO_K\otimes\bbZ/2^{m+1})^{\Gamma_L}=\Br(E\times E)_{2^{m+1}}/\Br_1(E\times E)_{2^{m+1}}.$ 
Therefore, \eqref{2power} shows that $(\Br(E\times E)/\Br_1(E\times E))_{2^{\infty}}$ is isomorphic to either $\bbZ/2^m$ or $\bbZ/2^m\times\bbZ/2$.

First suppose that $E_2=E_2(L)$. Then $\Gamma_L$ acts trivially on $E_2$ and hence
$$\frac{\Br(E\times E)_2}{\Br_1(E\times E)_2}=\frac{\End_{\Gamma_L} E_2}{(\cO_K\otimes\bbZ/2)^{\Gamma_L}}=\frac{\End E_2}{\cO_K\otimes\bbZ/2}\cong\bbZ/2\times\bbZ/2.$$
Therefore, 
$$\left(\frac{\Br(E\times E)}{\Br_1(E\times E)}\right)_{2^{\infty}}=\frac{\Br(E\times E)_{2^{m+1}}}{\Br_1(E\times E)_{2^{m+1}}}\cong\bbZ/2^m\times\bbZ/2.$$

Now suppose that $E_2\neq E_2(L)$. By Theorem \ref{geom2}, $$ \Br(\overline{E}\times\overline{E})_{2^{\infty}}^{\Gamma_L}
=\left(\frac{\End E_{2^{k}}}{\cO_K\otimes\bbZ/2^{k}}\right)^{\Gamma_L}=\cO_{2^m}\theta$$
and, in particular, for any $t\in\bbZ_{\geq 0}$ the natural injection  
\begin{equation}
\label{inj=isom}
\cO_{2^m}\theta=\left(\frac{\End E_{2^{k}}}{\cO_K\otimes\bbZ/2^{k}}\right)^{\Gamma_L}\hookrightarrow\left(\frac{\End E_{2^{k+t}}}{\cO_K\otimes\bbZ/2^{k+t}}\right)^{\Gamma_L}
\end{equation}
induced by multiplication by $2^t$ on $E_{2^{k+t}}$ is an isomorphism. Let $\vphi\in\End_{\Gamma_L} E_{2^{k+t}}$ for some $t\in\bbZ_{\geq 0}$. We have 
\begin{equation}
\label{quotientingeom2}
 \frac{\End_{\Gamma_L} E_{2^{k+t}}}{(\cO_K\otimes\bbZ/2^{k+t})^{\Gamma_L}}\hookrightarrow
\left(\frac{\End E_{2^{k+t}}}{\cO_K\otimes\bbZ/2^{k+t}}\right)^{\Gamma_L}.
\end{equation}
 Since $2\mid\Delta_K$, we can write $\cO_K=\bbZ[\sqrt{-d}]$ where $\Delta_K=-4d$.
Since the injection in \eqref{inj=isom} is an isomorphism, we can use \eqref{quotientingeom2} to write 
\begin{equation}
\label{varphi}
\vphi=2^t(x+2^my\sqrt{-d})\tau+z+w\sqrt{-d}
\end{equation}
for some $x,y,z,w\in\bbZ/2^{k+t}$. Here we abuse notation slightly by using $\tau$ to denote the image of $\tau$ in $\End_{\Gamma_L} E_{2^{k+t}}$. 
Since $\vphi$ is fixed by $\tau$, we have
\[2\sqrt{-d}(2^{m+t}y\tau+w)\equiv 0\pmod{2^{k+t}}.\]
Multiplying by $\sqrt{-d}$ and recalling that $k=m+\ord_2(\Delta_K)=m+\ord_2(d)+2$, we see that
\[2^{m+t}y\tau+w\equiv 0\pmod{2^{m+t+1}}.\]
Therefore, $w=2^{m+t}u$ for some $u\in\bbZ/2^{k+t}$ and we have
\[y\tau+u\equiv 0\pmod{2}.\]
Suppose for contradiction that $y\not\equiv 0\pmod{2}$. Then $\tau$ acts as multiplication by a scalar on $E_2$. Furthermore, since $\tau$ is invertible, this scalar cannot be zero and therefore must be $1$. In other words, $\tau$ acts as the identity on $E_2$. Furthermore, since $m(2)\geq 1$, $\Gamma_{KL}$ acts trivially on $E_2$ and hence $E_2=E_2(L)$, giving the required contradiction. Therefore, $y\equiv 0\pmod{2}$ and we can write $y=2v$ for some \mbox{$v\in\bbZ/2^{k+t}$} and substituting into \eqref{varphi} gives
\begin{equation}
\label{cyclicclass}
\vphi=2^t(x+2^{m+1}v\sqrt{-d})\tau+z+w\sqrt{-d}.
\end{equation}
Now part \ref{partplus} of Lemma \ref{tau fixed} shows that $2^{t+m+1}\sqrt{-d}\tau\in\cO_K\otimes\bbZ/2^{k+t}.$ Thus, \eqref{cyclicclass} shows that the class of $\varphi$ in $(\End E_{2^{k+t}}/(\cO_K\otimes\bbZ/2^{k+t}))^{\Gamma_L}$ is represented by $2^tx\tau$. 
But $\varphi$ was arbitrary and \eqref{quotientingeom2} is injective, hence $\End_{\Gamma_L} E_{2^{k+t}}/(\cO_K\otimes\bbZ/2^{k+t})^{\Gamma_L}$ is a cyclic group. Therefore, 
$$\left(\frac{\Br(E\times E)}{\Br_1(E\times E)}\right)_{2^{\infty}}=\frac{\Br(E\times E)_{2^{m+1}}}{\Br_1(E\times E)_{2^{m+1}}}\cong\bbZ/2^m.$$
\end{proof}

\section{Special cases and examples}
We retain the notation and conventions of Section \ref{compute}. In particular,  $L$ is a number field and $E/L$ is an elliptic curve with complex multiplication by $\cO_K$.

\label{examples}

\begin{theorem}
\label{LinK}
Suppose that $L\subset H_K$, where $H_K$ denotes the Hilbert class field of $K$. Let $\ell\in\bbZ_{>0}$ be prime. Then $m(\ell)=n(\ell)=0$, except in the following special cases where $n(\ell)=1$:
\begin{enumerate}
\item $K=\bbQ(\zeta_3)$ and $\ell\leq 3$,
\item $K=\bbQ(i)$ and $\ell=2$,
\item $\Delta_K\equiv 1\pmod{8}$ and $\ell=2$.
\end{enumerate}
Consequently, if 
$\cO_K^*=\{\pm 1\}$ and $\Delta_K\not\equiv 1\pmod{8}$, then 
\[\Br(E\times E)=\Br_1(E\times E).\]

\end{theorem}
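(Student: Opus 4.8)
The plan is to deduce Theorem \ref{LinK} from the machinery already developed: the key point is that under the stated hypothesis $L\subset H_K$ we have strong control over which ring class fields can embed into $KL$, hence over $n(\ell)$, and then over $m(\ell)$ via Proposition \ref{upper bound}. First I would observe that $L\subset H_K$ forces $KL\subset H_K$ (since $H_K/K$ is Galois and $K\subset KL\subset KH_K=H_K$), so $[KL:K]$ divides $h(\cO_K)=[H_K:K]$. For $K_{\ell^k}$ to embed into $KL$ we would need $[K_{\ell^k}:K]=h(\cO_{\ell^k})$ to divide $[KL:K]$, hence to divide $h(\cO_K)$; by the class number formula \eqref{degreeformula},
\[
h(\cO_{\ell^k})=\frac{h(\cO_K)\,\ell^k}{[\cO_K^*:\cO_{\ell^k}^*]}\Bigl(1-\Bigl(\tfrac{\Delta_K}{\ell}\Bigr)\tfrac{1}{\ell}\Bigr),
\]
so dividing by $h(\cO_K)$ we need $\ell^k\bigl(1-(\frac{\Delta_K}{\ell})\frac1\ell\bigr)/[\cO_K^*:\cO_{\ell^k}^*]$ to be an integer dividing $1$, i.e.\ $\ell^k\bigl(1-(\frac{\Delta_K}{\ell})\frac1\ell\bigr)=[\cO_K^*:\cO_{\ell^k}^*]$. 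The quantity $\ell^k(1-(\frac{\Delta_K}{\ell})\frac1\ell)$ equals $\ell^{k-1}(\ell-(\frac{\Delta_K}{\ell}))$, which for $k\geq 1$ is at least $\ell-1\geq 1$, and the unit index $[\cO_K^*:\cO_{\ell^k}^*]$ is $1$ unless $K\in\{\bbQ(i),\bbQ(\zeta_3)\}$, where it is at most $3$.

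Next I would do the elementary case analysis. If $\cO_K^*=\{\pm1\}$, the condition becomes $\ell^{k-1}(\ell-(\frac{\Delta_K}{\ell}))=1$, which (using $(\frac{\Delta_K}{\ell})\in\{-1,0,1\}$, and the Kronecker symbol convention at $\ell=2$) forces $\ell=2$, $k=1$, and $(\frac{\Delta_K}{2})=1$, i.e.\ $\Delta_K\equiv 1\pmod 8$; this is case (3). For $K=\bbQ(i)$ one has $\Delta_K=-4$, so $(\frac{\Delta_K}{2})=0$ and the condition $\ell^{k-1}\cdot\ell=[\cO_K^*:\cO_{\ell^k}^*]\leq 4$ (with $\cO_K^*$ of order $4$) gives $\ell=2$, $k=1$; this is case (2). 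For $K=\bbQ(\zeta_3)$, $\Delta_K=-3$, $\cO_K^*$ has order $6$, and one checks that the only solutions of $\ell^{k-1}(\ell-(\frac{\Delta_K}{\ell}))=[\cO_K^*:\cO_{\ell^k}^*]$ are $\ell=2,k=1$ (where $(\frac{\Delta_K}{2})=-1$, giving $3=[\cO_K^*:\cO_{2}^*]$) and $\ell=3,k=1$ (where $(\frac{\Delta_K}{3})=0$, giving $3=[\cO_K^*:\cO_{3}^*]$); these are case (1). In every case $n(\ell)\leq 1$, with $n(\ell)=1$ exactly in the listed cases, and $n(\ell)=0$ otherwise; I should remember to also note the trivial direction, that in each listed case $K_\ell$ does embed into $H_K\supset KL$ only when $L$ is large enough, so strictly we get $n(\ell)\leq 1$ always and the equality $n(\ell)=1$ is possible but not forced — the theorem as stated says "$n(\ell)=1$" in those cases, so I would phrase it as: these are the only cases where $n(\ell)$ can be nonzero, and then one verifies directly that $K_\ell\subseteq H_K$ in each, so the maximal possible value is indeed attained for suitable $L$.

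Then I would invoke Proposition \ref{upper bound}: $m(\ell)\leq n(\ell)$ always, with equality when $\cO_K^*=\{\pm1\}$. So when $\cO_K^*=\{\pm1\}$ and $\Delta_K\not\equiv 1\pmod 8$, none of the three exceptional cases occurs, hence $n(\ell)=0$ for all $\ell$, hence $m(\ell)=0$ for all $\ell$. Applying Theorem \ref{quotient1} (if $K\subseteq L$) or Theorem \ref{quotient2} (if $K\not\subseteq L$) with $m(\ell)=0$ for every prime $\ell$ gives $(\Br(E\times E)/\Br_1(E\times E))_{\ell^\infty}=0$ for all $\ell$; note that in the $\ell=2$, $2\mid\Delta_K$ exceptional branch of Theorem \ref{quotient2} the hypothesis $m\geq 1$ fails, so that branch is vacuous. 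Since $\Br(E\times E)/\Br_1(E\times E)$ is a torsion group which is the direct sum of its $\ell$-primary parts (Skorobogatov--Zarhin finiteness, or the first Lemma of \S\ref{compute}), it is trivial, i.e.\ $\Br(E\times E)=\Br_1(E\times E)$.

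The main obstacle, such as it is, is purely bookkeeping: carefully running the class number formula \eqref{degreeformula} with the correct unit indices $[\cO_K^*:\cO_{\ell^k}^*]$ (which themselves depend on $\ell$ and $K$ — e.g.\ for $K=\bbQ(\zeta_3)$ the index jumps at $\ell=2$ and $\ell=3$) and with the Kronecker-symbol convention at $\ell=2$, and checking that no further small cases slip through. There is no deep content beyond Proposition \ref{upper bound} and the two structure theorems; the work is in the finite, explicit enumeration of $(\ell,k,K)$ making $h(\cO_{\ell^k})\mid h(\cO_K)$.
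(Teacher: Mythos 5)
Your argument follows the same route as the paper: use the class number formula \eqref{degreeformula} to enumerate the pairs $(\ell,k)$ with $h(\cO_{\ell^k})=h(\cO_K)$, conclude $n(\ell)\leq 1$ with the three listed exceptions being the only places where $n(\ell)$ can be nonzero, and then feed $m(\ell)\leq n(\ell)=0$ into Theorems \ref{quotient1} and \ref{quotient2}. The enumeration itself is correct (modulo some loose phrasing in the $\bbQ(i)$ case, where the relevant unit index is $[\cO_K^*:\cO_{\ell^k}^*]=2$ for all $\ell^k>1$, not ``at most $4$''), and the deduction of $\Br(E\times E)=\Br_1(E\times E)$ under the hypotheses $\cO_K^*=\{\pm1\}$, $\Delta_K\not\equiv 1\pmod 8$ is complete.

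There is, however, one genuine gap: the theorem asserts $n(\ell)=1$ in the three special cases, and you only prove $n(\ell)\leq 1$ there, adding that equality ``is possible but not forced'' and holds ``for suitable $L$''. That hedge is wrong: equality is forced. The ingredient you are missing is the CM fact used at the start of the paper's proof: since $E$ is defined over $L$ we have $\bbQ(j(E))\subset L$, and $K(j(E))=H_K$, so $H_K\subset KL$; combined with your observation $KL\subset H_K$ this gives $KL=H_K$ exactly, i.e.\ $[KL:K]=h(\cO_K)$. Now in each special case the degree count gives $[K_\ell:K]=h(\cO_K)$, and since $H_K\subseteq K_\ell$ always, this forces $K_\ell=H_K=KL$, whence $K_\ell\subset KL$ and $n(\ell)=1$ unconditionally. (The same equality $[KL:K]=h(\cO_K)$ also lets you replace your divisibility-both-ways argument by a direct comparison of degrees, which is how the paper phrases it.) This is a one-line fix, but as written your proof does not establish the full statement of the theorem.
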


\begin{proof}
Let $j(E)$ denote the $j$-invariant of the elliptic curve $E$. Since $E$ is defined over $L$, we have $\bbQ(j(E))\subset L$. The theory of complex multiplication tells us that $K(j(E))=H_K$. Therefore, $[KL:K]=[H_K:K]=h(\cO_K)$. Using the formula for the degree of a ring class field, as given in \eqref{degreeformula}, we see that in every case, $[K_{\ell^2}:K]>h(\cO_K)$ so $n(\ell)\leq 1$.
Furthermore, $[K_{\ell}:K]>h(\cO_K)$ except in the special cases (i), (ii) and (iii) of the theorem. The rest follows immediately from Proposition \ref{upper bound} and Theorems \ref{quotient1} and \ref{quotient2}.
\end{proof}

\begin{remark}
Since $K(j(E))=H_K$, the hypothesis $L\subset H_K$ holds precisely when $L=H_K$ or \mbox{$L=\bbQ(j(E))$}. 
\end{remark}

If $\cO_K^*=\{\pm1\}$, then Proposition \ref{upper bound} allows us to calculate $m(\ell)$ for all primes $\ell\in\bbZ_{>0}$, and hence compute the transcendental part of $\Br(E\times E)$. On the other hand, if $K\in\{\bbQ(i),\bbQ(\zeta_3)\}$, then Proposition~\ref{upper bound} only tells us that $m(\ell)\leq n(\ell)$ for all primes $\ell\in\bbZ_{>0}$. The following two propositions deal with $K=\bbQ(i)$ and $K=\bbQ(\zeta_3)$, and in each case give sufficient conditions which allow us to conclude that $m(\ell)=0$.

\begin{proposition}
\label{d=1}
Let $\ell\in\bbZ_{>0}$ be an odd prime. Let $K=\bbQ(i)$.  
Suppose that there exists a finite prime $\frq$ of $KL$ satisfying all of the following conditions.
\begin{enumerate}
\item $\frq$ is coprime to $2\ell$,
\item $E$ has good reduction at $\frq$,
\item \label{division} $f_{\frs/\frp}\mid f_{\frq/\frp}$, where $\frp=\frq\cap\cO_K$ and $\frs$ is a prime of $K_{2\ell}$ above $\frp$,
\item $\psi_{E/KL}(\frq)\notin\cO_2$. 
\end{enumerate} 
Then $m(\ell)=0$, and hence \[(\Br(E\times E)/\Br_1(E\times E))_{\ell^{\infty}}=\Br(\overline{E}\times\overline{E})_{\ell^{\infty}}^{\Gamma_{L}}
=\Br(\overline{E}\times\overline{E})_{\ell^{\infty}}^{\Gamma_{KL}}=0.\]

\end{proposition}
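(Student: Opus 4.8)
The plan is to show that the hypotheses force $n(\ell) = 0$, hence $m(\ell) \leq n(\ell) = 0$ by Proposition~\ref{upper bound}; but wait---we need to be careful, because $K = \bbQ(i)$ has $\cO_K^* \neq \{\pm 1\}$, so equality $m(\ell) = n(\ell)$ is \emph{not} guaranteed. In fact the point of this proposition is precisely to handle this subtle case directly via the Gr\"{o}ssencharacter. So the real plan is: use the given prime $\frq$ to run an argument parallel to the proof of Proposition~\ref{upper bound}, but with $\cO_2 = \bbZ + 2\cO_K$ playing the role of the obstruction, and conclude $m(\ell) < 1$, i.e. $m(\ell) = 0$.

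First I would recall that $m(\ell)$ is the largest $k$ such that $\psi_{E/KL}(\frq') \in \cO_{\ell^k}$ for \emph{all} good-reduction primes $\frq'$ of $KL$ coprime to $\ell$. To prove $m(\ell) = 0$ it suffices to exhibit a single such prime $\frq'$ with $\psi_{E/KL}(\frq') \notin \cO_\ell$. The given prime $\frq$ is coprime to $2\ell$ and of good reduction, so it is a candidate; I must show $\psi_{E/KL}(\frq) \notin \cO_\ell$. The key tool is the computation from the proof of Proposition~\ref{upper bound}: with $\frp = \frq \cap \cO_K$ and $\frs$ a prime of $K_{2\ell}$ above $\frp$, the Artin symbol $(\frp, K_{2\ell}/K)$ has order $f_{\frs/\frp}$ in $\Gal(K_{2\ell}/K)$, and by Theorems 9.1 and 9.2 of \cite{Silverman}, $\psi_{E/KL}(\frq)$ generates the principal ideal $N_{KL/K}(\frq) = \frp^{f_{\frq/\frp}}$.

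The crucial step is condition~(iv): $\psi_{E/KL}(\frq) \notin \cO_2$. I would argue: suppose for contradiction that $\psi_{E/KL}(\frq) \in \cO_\ell$. Since $\ell$ is an odd prime and $K_{2\ell}$ contains $K_2$ and $K_\ell$ (as $\cO_{2\ell} \subseteq \cO_2$ and $\cO_{2\ell} \subseteq \cO_\ell$, giving containments of ring class fields), the fact that $\psi_{E/KL}(\frq) \in \cO_\ell$ means $N_{KL/K}(\frq) = (\psi_{E/KL}(\frq))$ with generator lying in $\cO_\ell$. Now I want to also get that it lies in $\cO_2$ to contradict~(iv). Here condition~(iii), $f_{\frs/\frp} \mid f_{\frq/\frp}$ where $\frs$ is above $\frp$ in $K_{2\ell}$, enters: it gives $(\frp, K_{2\ell}/K)^{f_{\frq/\frp}} = 1$, hence $(\frp^{f_{\frq/\frp}}, K_{2\ell}/K) = (N_{KL/K}(\frq), K_{2\ell}/K) = 1$, so by definition of the ring class field $K_{2\ell}$ we get $N_{KL/K}(\frq) = (\beta)$ for some $\beta \in \cO_{2\ell} \subseteq \cO_2$. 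Since $\psi_{E/KL}(\frq)$ is also a generator of this ideal, $\psi_{E/KL}(\frq) = \zeta \beta$ for some unit $\zeta \in \cO_K^* = \{\pm 1, \pm i\}$. If $\zeta = \pm 1$ then $\psi_{E/KL}(\frq) \in \cO_2$, contradicting~(iv); the remaining case $\zeta = \pm i$ needs ruling out. This is where I expect the main obstacle: one must check that $\zeta = \pm i$ is incompatible with $\psi_{E/KL}(\frq) \in \cO_\ell$ for odd $\ell$. Indeed if $\beta = b_0 + 2b_1 i \in \cO_2$ then $i\beta = -2b_1 + b_0 i$; for $i\beta \in \cO_\ell$ we would need $b_0 \equiv 0 \pmod \ell$, and then both $\beta$ and $i\beta$ would have absolute norm divisible by $\ell^2$ or else $\beta$ itself would already lie in a smaller order---a short congruence analysis modulo $\ell$ (using $\ell$ odd and coprime to the norm, since $\frq$ is coprime to $\ell$) shows this forces $\psi_{E/KL}(\frq) \notin \cO_\ell$ after all, the desired contradiction. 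Hence $m(\ell) = 0$.

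Finally, once $m(\ell) = 0$ is established, the displayed chain of equalities follows immediately: by Theorem~\ref{quotient1} applied to the field $KL$ (which contains $K$), $(\Br(E\times E)/\Br_1(E\times E))_{\ell^\infty}$ over $KL$ computes $\Br(\overline{E}\times\overline{E})_{\ell^\infty}^{\Gamma_{KL}} \cong (\bbZ/\ell^0)^2 = 0$; since $\ell$ is odd, $\ell \nmid \Delta_K$ is automatic here as well, so Theorems~\ref{quotient1}, \ref{quotient2}, \ref{geom1} and \ref{geom2} all give zero, and the $\Gamma_L$-invariants, being a subgroup of the $\Gamma_{KL}$-invariants, vanish too. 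I would write this last paragraph tersely, citing the relevant theorems, as it is purely formal.
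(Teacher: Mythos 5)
Your argument is correct and follows essentially the same route as the paper: condition (iii) gives $(N_{KL/K}(\frq),K_{2\ell}/K)=1$, hence $N_{KL/K}(\frq)=(\beta)$ with $\beta\in\cO_{2\ell}$, and condition (iv) then forces $\psi_{E/KL}(\frq)=\pm i\beta$, which one checks cannot lie in $\cO_\ell$. In the unit-elimination step you should exploit the full membership $\beta\in\cO_{2\ell}$ rather than only $\beta\in\cO_2$ (your normal form $\beta=b_0+2b_1i$ does not quite give the divisibility you claim): writing $\beta=b_0+2\ell b_1 i$, the condition $i\beta\in\cO_\ell$ forces $\ell\mid b_0$, hence $\ell\mid\beta$ in $\cO_K$ and $\ell^2$ divides $N_{K/\bbQ}(\beta)$, which is the absolute norm of $\frq$ and so coprime to $\ell$ --- this is the clean contradiction, and it is precisely the detail the paper itself leaves implicit after ``so $\psi_{E/KL}(\frq)=\pm i\alpha$''.
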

Note that condition \ref{division} is trivially satisfied if $K_{2\ell}\subseteq KL$.

\begin{proof}
Let $\frq$ be a finite prime of $KL$ satisfying conditions (1)--(4). Let $\frp$ and $\frs$ be primes as described in condition \ref{division}.
The Artin symbol $(\frp, K_{2\ell}/K)$ has order $f_{\frs/\frp}$ in $\Gal(K_{2\ell}/K)$. 
 Since $f_{\frs/\frp}$ divides $f_{\frq/\frp}$, we have
 \[1=(\frp,K_{2\ell}/K)^{f_{\frq/\frp}}
 =(\frp^{f_{\frq/\frp}},K_{2\ell}/K)
 =(N_{KL/K}(\frq),K_{2\ell}/K).\]

By the definition of the ring class field $K_{2\ell}$, this implies that $$N_{KL/K}(\frq)=(\alpha)$$ for some $\alpha\in\cO_{2\ell}$. Now $\psi_{E/KL}(\frq)$ is a generator of $N_{KL/K}(\frq)$ but $\psi_{E/KL}(\frq)\notin\cO_2$ by the hypothesis, so $\psi_{E/KL}(\frq)=\pm i\alpha$. Therefore, $\psi_{E/KL}(\frq)\notin\cO_{\ell}$, and hence $m(\ell)=0$. 
\end{proof}

\begin{proposition}
\label{d=3}
Let $K=\bbQ(\zeta_3)$ and let $\ell\in\bbZ_{>0}$ be prime with $\ell\neq 3$. 
Suppose that there exists a finite prime $\frq$ of $KL$ satisfying all of the following conditions.
\begin{enumerate}
\item $\frq$ is coprime to $3\ell$ 
\item $E$ has good reduction at $\frq$,
\item $f_{\frs/\frp}\mid f_{\frq/\frp}$, where $\frp=\frq\cap\cO_K$ and $\frs$ is a prime of $K_{3\ell}$ above $\frp$,
\item $\psi_{E/KL}(\frq)\notin\cO_3$. 
\end{enumerate}
Then $m(\ell)=0$ and hence \[(\Br(E\times E)/\Br_1(E\times E))_{\ell^{\infty}}=\Br(\overline{E}\times\overline{E})_{\ell^{\infty}}^{\Gamma_{L}}
=\Br(\overline{E}\times\overline{E})_{\ell^{\infty}}^{\Gamma_{KL}}=0.\]

\end{proposition}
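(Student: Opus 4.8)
The plan is to mimic the structure of the proof of Proposition \ref{d=1} almost verbatim, adapting only the arithmetic that is specific to $K = \bbQ(\zeta_3)$. Recall that $\cO_K^* = \{\pm 1, \pm \zeta_3, \pm \zeta_3^2\}$ is a cyclic group of order $6$, so the issue — just as for $\bbQ(i)$ — is that the proof of Proposition \ref{upper bound} only gives $m(\ell) \leq n(\ell)$, and we need an argument that produces an actual prime $\frq$ witnessing $\psi_{E/KL}(\frq) \notin \cO_\ell$, i.e. forcing $m(\ell) = 0$. The relevant prime power to work with is $3\ell$ rather than $2\ell$: we want to show that if $N_{KL/K}(\frq)$ is generated by an element of $\cO_{3\ell}$ but $\psi_{E/KL}(\frq)$ itself is not in $\cO_3$, then no unit multiple of that generator can lie in $\cO_\ell$.

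First I would take a prime $\frq$ of $KL$ satisfying conditions (1)--(4), set $\frp = \frq \cap \cO_K$ and let $\frs$ be a prime of $K_{3\ell}$ above $\frp$. Exactly as in the proof of Proposition \ref{d=1}, the Artin symbol $(\frp, K_{3\ell}/K)$ has order $f_{\frs/\frp}$ in $\Gal(K_{3\ell}/K)$, and since $f_{\frs/\frp} \mid f_{\frq/\frp}$ we get
\[ 1 = (\frp, K_{3\ell}/K)^{f_{\frq/\frp}} = (\frp^{f_{\frq/\frp}}, K_{3\ell}/K) = (N_{KL/K}(\frq), K_{3\ell}/K). \]
By the defining property of the ring class field $K_{3\ell}$, this forces $N_{KL/K}(\frq) = (\alpha)$ for some $\alpha \in \cO_{3\ell} = \bbZ + 3\ell\,\cO_K$. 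Now $\psi_{E/KL}(\frq)$ is a generator of this principal ideal (Silverman, Theorems 9.1--9.2), so $\psi_{E/KL}(\frq) = u\alpha$ for some unit $u \in \cO_K^*$.

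The heart of the matter is the elementary number theory: one must check that among the unit multiples $u\alpha$ with $u \in \cO_K^* = \langle -\zeta_3\rangle$, if none of them lies in $\cO_3 = \bbZ + 3\cO_K$ — which is exactly hypothesis (4), since $\psi_{E/KL}(\frq) \notin \cO_3$ — then none of them lies in $\cO_\ell = \bbZ + \ell\,\cO_K$ either, provided $\ell \neq 3$. The point is that $\cO_{3\ell} = \cO_3 \cap \cO_\ell$ (as $3$ and $\ell$ are coprime), so $\alpha \in \cO_\ell$ too; writing $\alpha = a + b\ell\beta$ with $\beta$ a generator of $\cO_K$ over $\bbZ$ and $3 \mid \ell b$ hence $3 \mid b$ (using $\gcd(3,\ell)=1$ and $\alpha \in \cO_3$), one sees the failure to be in $\cO_3$ is "entirely a unit phenomenon". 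A short case analysis over the six units of $\cO_K^* = \{\pm 1, \pm\zeta_3, \pm\zeta_3^2\}$ — using that for a unit $u \neq \pm 1$, $u\alpha \in \cO_\ell$ would with $\alpha \in \cO_\ell$ force $(u-1)\alpha$ or $(u+1)\alpha \in \ell\,\cO_K$, and $\ell \nmid N_{K/\bbQ}(u \mp 1) \in \{1,3\}$ since $\ell \neq 3$, hence $\alpha \in \ell\,\cO_K$, contradicting that $\frq$ is coprime to $\ell$ (so $N_{KL/K}(\frq)$, and therefore $\alpha$, is a unit mod $\ell$) — shows $u$ must be $\pm 1$, whence $\psi_{E/KL}(\frq) = \pm\alpha \in \cO_3$, contradicting hypothesis (4) unless... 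Actually the cleaner route: hypotheses (1)--(3) give $\alpha \in \cO_{3\ell} \subseteq \cO_3$, and since $\psi_{E/KL}(\frq) \notin \cO_3$ we must have $u \notin \{\pm 1\}$; but then I claim $u\alpha \notin \cO_\ell$, for otherwise, combining $u\alpha \in \cO_\ell$ with $\alpha \in \cO_\ell$ forces, as just indicated, $\alpha \in \ell\,\cO_K$, contradicting coprimality of $\frq$ to $\ell$. Therefore $\psi_{E/KL}(\frq) = u\alpha \notin \cO_\ell$, so $m(\ell) = 0$. The final chain of equalities $(\Br(E\times E)/\Br_1(E\times E))_{\ell^\infty} = \Br(\overline{E}\times\overline{E})^{\Gamma_L}_{\ell^\infty} = \Br(\overline{E}\times\overline{E})^{\Gamma_{KL}}_{\ell^\infty} = 0$ then follows from Theorems \ref{quotient1}, \ref{quotient2}, \ref{geom1} and \ref{geom2} with $m = m(\ell) = 0$ (noting $\ell$ odd or, if $\ell = 2$, that $\ell \nmid \Delta_K = -3$). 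The main obstacle is pinning down the unit calculation cleanly for the six-element unit group and making sure the coprimality-to-$\ell$ hypothesis is invoked correctly to rule out $\alpha \in \ell\,\cO_K$; everything else is a transcription of the $\bbQ(i)$ argument.
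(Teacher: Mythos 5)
Your proof is correct and follows exactly the route the paper intends: the paper's own proof of Proposition \ref{d=3} simply says the strategy is the same as for Proposition \ref{d=1}, i.e.\ the Artin symbol computation forces $N_{KL/K}(\frq)=(\alpha)$ with $\alpha\in\cO_{3\ell}$, so $\psi_{E/KL}(\frq)=u\alpha$ with $u\in\cO_K^*\setminus\{\pm1\}$ by hypothesis (4), whence $\psi_{E/KL}(\frq)\notin\cO_\ell$. Your additional verification that $u\alpha\notin\cO_\ell$ for $u\in\{\pm\zeta_3,\pm\zeta_3^2\}$ (using that $\alpha$ is a unit modulo $\ell$ since $\frq$ is coprime to $\ell$) is exactly the step the paper leaves implicit, and it is carried out correctly.
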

As before, condition \ref{division} is trivially satisfied if $K_{3\ell}\subseteq KL$.
\begin{proof}
The strategy is the same as for Proposition \ref{d=1}. 
\end{proof}

\begin{example}
Let $E$ be the elliptic curve over $\bbQ$ with affine equation
\[y^2+y=x^3-x^2-7x+10.\]
By \cite{LMFDB-11}, $E$ has complex multiplication by the ring of integers of $K=\bbQ(\sqrt{-11})$.
Theorem \ref{LinK} tells us that $m(\ell)=n(\ell)=0$ for every prime $\ell\in\bbZ_{>0}$ and therefore
\[\Br(E\times E)=\Br_1(E\times E).\]
Let $\theta$ denote the image of complex conjugation in $\End E_{11}/(\cO_K\otimes\bbZ/11)$. Then Theorem \ref{geom2} gives
\[\Br(\overline{E}\times\overline{E})^{\Gamma_{\bbQ(\sqrt{-11})}}
=\Br(\overline{E}\times\overline{E})^{\Gamma_{\bbQ}}
=\cO_K\theta\cong\bbZ/11.\]
\end{example}

\begin{example}
Let $E$ be the elliptic curve over $\bbQ$ with affine equation $$y^2=x^3-Dx$$ where $D\in\bbZ\setminus\{0\}$. Then $\End E=\bbZ[i]$. Let $K=\bbQ(i)$. For any odd prime $\ell\in\bbZ_{>0}$, Theorem~\ref{LinK} gives
\[(\Br(E\times E)/\Br_1(E\times E))_{\ell^{\infty}}=\Br(\overline{E}\times\overline{E})_{\ell^{\infty}}^{\Gamma_{\bbQ}}=\Br(\overline{E}\times\overline{E})_{\ell^{\infty}}^{\Gamma_{K}}=0.\]
Theorem \ref{LinK} tells us that $n(2)=1$. We must compute $m(2)$. 
By Proposition \ref{upper bound}, $m(2)\leq n(2)$. Let $\frq$ be a finite prime of $\bbZ[i]$ that is coprime to $2D$. Let $\pi_\frq\in\bbZ[i]$ be the unique generator of $\frq$ such that \mbox{$\pi_\frq\equiv 1\pmod{(2+2i)}$}. Exercise 2.34 in \cite{Silverman} shows that
\[\psi_{E/K}(\frq)=\Biggl(\frac{D}{\pi_\frq}\Biggr)^{-1}_4\pi_\frq\]
where $(\frac{\cdot}{\cdot})_4$ denotes the quartic residue symbol on $\bbZ[i]$.

First suppose that $D$ is a square in $\bbZ[i]$. Then for all finite primes $\frq$ which are coprime to $2D$, $\psi_{E/K}(\frq)=\pm\pi_\frq\in\cO_2$ and therefore $m(2)=1$. Let $\theta$ denote the image of complex conjugation in $\End E_8/(\bbZ[i]\otimes\bbZ/8)$. Applying Theorems \ref{geom2} and \ref{geom1}, we see that 
\begin{eqnarray*}
\Br(\overline{E}\times\overline{E})^{\Gamma_{K}}&=&\Br(\overline{E}\times\overline{E})_{2^{\infty}}^{\Gamma_{K}}
=\bbZ[i]\theta \cong \bbZ/4\times \bbZ/4\\
\textrm{and }\ \Br(\overline{E}\times\overline{E})^{\Gamma_{\bbQ}}&=&\Br(\overline{E}\times\overline{E})_{2^{\infty}}^{\Gamma_{\bbQ}}
=\cO_2\theta \cong \bbZ/4\times\bbZ/2.
\end{eqnarray*}
Applying Theorem \ref{quotient2}, we see that
\begin{eqnarray*}
\frac{\Br(E\times E)}{\Br_1(E\times E)}&=&\frac{\Br(E\times E)_4}{\Br_1(E\times E)_4}=\frac{\End_{\Gamma_{\bbQ}} E_4}{(\bbZ[i]\otimes\bbZ/4)^{\Gamma_{\bbQ}}}\\
&\cong & \begin{cases}\bbZ/2\times\bbZ/2 & \textrm{if $D$ is a square in $\bbZ$}\\
\bbZ/2 & \textrm{if $D$ is not a square in $\bbZ$.}
\end{cases}
\end{eqnarray*}

Now suppose that $D$ is not a square in $\bbZ[i]$. By \cite{CF}, Exercise 6.1, there exist infinitely many finite primes $\frq$ of $K$ coprime to $2D$ such that $D$ is not a square modulo $\frq$. For such $\frq$, we have $\psi_{E/K}(\frq)=\pm i\pi_\frq$ and therefore $\psi_{E/K}(\frq)\notin\cO_2$. Consequently, $m(2)=0$. Let $\eta$ denote the image of complex conjugation in $\End E_4/(\bbZ[i]\otimes\bbZ/4)$. Then Theorem \ref{geom2} gives
\[\Br(\overline{E}\times\overline{E})^{\Gamma_{K}}
=\Br(\overline{E}\times\overline{E})^{\Gamma_{\bbQ}}
=\bbZ[i]\eta\cong\bbZ/2\times \bbZ/2\]
and Theorem \ref{quotient2} gives
$\Br(E\times E)=\Br_1(E\times E).$
\end{example}

\begin{example}
\label{egzeta3}
Let $E$ be the elliptic curve over $\bbQ$ with affine equation $$y^2=x^3+D$$ where $D\in\bbZ\setminus\{0\}$. Then $\End E=\bbZ[\zeta_3]$, where $\zeta_3$ denotes a primitive $3$rd root of unity. Let $K=\bbQ(\zeta_3)$. For any prime $\ell>3$, Theorem \ref{LinK} tells us that $m(\ell)=0$ and therefore
\[(\Br(E\times E)/\Br_1(E\times E))_{\ell^{\infty}}=\Br(\overline{E}\times\overline{E})_{\ell^{\infty}}^{\Gamma_{\bbQ}}
=\Br(\overline{E}\times\overline{E})_{\ell^{\infty}}^{\Gamma_{K}}=0.\]
It remains to compute $m(\ell)$ for $\ell\leq 3$. 
For $\ell\leq 3$, Theorem \ref{LinK} gives $m(\ell)\leq 1$.
Let $\frq$ be a finite prime of $K$ that is coprime to $6D$. Let $\pi_\frq\in\bbZ[\zeta_3]$ be the unique generator of $\frq$ which satisfies $\pi_\frq\equiv 1\pmod{3}$. By \cite{Silverman}, Example II.10.6, the Gr\"{o}ssencharacter attached to $E/K$ is given by 
\begin{equation}
\label{grossencharacter}
\psi_{E/K}(\frq)=\Biggl(\frac{4D}{\pi_\frq}\Biggr)_6^{-1}\pi_\frq
\end{equation}
where $(\frac{\cdot}{\cdot})_6$ denotes the sextic residue symbol on $\bbZ[\zeta_3]$.

\smallskip

\paragraph{\underline{\emph{Computing $m(2)$}}}
By the cubic reciprocity law (see Theorem 7.8 of \cite{Lemmermeyer}, for example), 
\begin{equation}
\label{cubic reciprocity}
\Biggl(\frac{4}{\pi_\frq}\Biggr)_6=\Biggl(\frac{2}{\pi_\frq}\Biggr)_3=\Biggl(\frac{\pi_\frq}{2}\Biggr)_3\equiv \pi_\frq\pmod{2}
\end{equation}
where $(\frac{\cdot}{\cdot})_3$ denotes the cubic residue symbol on $\bbZ[\zeta_3]$. 
Substituting \eqref{cubic reciprocity} into \eqref{grossencharacter} gives
\begin{equation}
\label{mod2}
\psi_{E/K}(\frq)=\Biggl(\frac{4}{\pi_\frq}\Biggr)_6^{-1}\Biggl(\frac{D}{\pi_\frq}\Biggr)_6^{-1}\pi_\frq\equiv\Biggl(\frac{D}{\pi_\frq}\Biggr)_6^{-1}\pmod{2}.
\end{equation}
First, suppose that $D$ is a cube in $\bbZ$ (equivalently, $D$ is a cube in $\bbZ[\zeta_3]$). Then 
$\bigl(\frac{D}{\pi_\frq}\bigr)_6=\pm 1$
and \eqref{mod2} shows that $\psi_{E/K}(\frq)\in\cO_2$ for all finite primes $\frq$ that are coprime to $6D$.
Therefore, $m(2)=1$.

Now suppose that $D$ is not a cube in $\bbZ$. By \cite{CF}, Exercise 6.1, there exists a finite prime $\frq$ of $K$ coprime to $6D$ such that $D$ is not a cube modulo $\frq$. For such $\frq$, $\bigl(\frac{D}{\pi_\frq}\bigr)_6\neq\pm 1$, and \eqref{mod2} shows that $\psi_{E/K}(\frq)\notin\cO_2$. Therefore, $m(2)=0$.



\smallskip

\paragraph{\underline{\emph{Computing $m(3)$}}}

First suppose that $4D$ is a cube in $\bbZ$. Then \eqref{grossencharacter} shows that for all finite primes $\frq$ which are coprime to $6D$, $\psi_{E/K}(\frq)=\pm\pi_\frq\in\cO_3$. Hence, $m(3)=1$. 

Now suppose that $4D$ is not a cube in $\bbZ$. By \cite{CF}, Exercise 6.1, there exists a finite prime $\frq$ of $K$ coprime to $6D$ such that $4D$ is not a cube modulo $\frq$. For such $\frq$, $\bigl(\frac{4D}{\frq}\bigr)_6\neq\pm 1$, whereby $\psi_{E/K}(\frq)\notin\cO_3$. Therefore, $m(3)=0$. 

\end{example}

\section{The Brauer group of $\Kum(E\times E)$}
\label{Kummer}

Let $L$ be a number field and let $E/L$ be an elliptic curve with complex multiplication by an order $\cO$ of an imaginary quadratic field $K$. Let $X=\Kum(E\times E)$ be the K3 surface which is the minimal desingularisation of the quotient of $E\times E$ by the involution $(P,Q)\mapsto (-P,-Q)$.

\begin{proposition}
\label{Br1trivial}
If $\Delta_K\equiv 1\pmod{4}$ and $2\nmid [\cO_K:\cO]$ then $$\Br_1(X)=\Br(L)$$ and therefore there is no algebraic Brauer-Manin obstruction to weak approximation on $X$.
\end{proposition}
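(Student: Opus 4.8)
The plan is to identify $\Br_1(X)$ with a subgroup of Galois cohomology and show it collapses to the constants under the stated hypotheses. Recall that for a K3 surface $X$ over $L$ with $X(L)\neq\emptyset$ (here the image of the origin $(O,O)$ gives an $L$-point), there is an isomorphism $\Br_1(X)/\Br(L)\cong H^1(\Gamma_L,\Pic(\overline X))$. So the task reduces to computing $H^1(\Gamma_L,\Pic(\overline{X}))$ and showing it vanishes. For a Kummer surface $X=\Kum(E\times E)$, the geometric Picard lattice is generated by the image of $\Pic(\overline{E}\times\overline{E})=\NS(\overline E\times\overline E)$ together with the $16$ exceptional curves coming from the blow-up of the $2$-torsion, and the Galois action on the exceptional classes is through the action on $E_2\times E_2$. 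Thus I would first reduce, via the standard exact sequences relating $\Pic(\overline X)$ to $\NS(\overline E\times\overline E)$ and the permutation module $\bbZ[E_2\times E_2]$ on the exceptional divisors, to computing the Galois cohomology of these two pieces.

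First I would handle the exceptional divisors: the relevant permutation module is $\bbZ^{16}$ with $\Gamma_L$ acting through its action on $(E_2\setminus\{O\})\times\ldots$ — more precisely on the $16$ two-torsion points of $E\times E$, equivalently on $E_2\oplus E_2$. By Shapiro's lemma, the first cohomology of such a permutation module is controlled by $H^1$ of the stabilizers, and for a lattice spanned by a Galois-stable set of classes the obstruction to descent of individual classes is measured by a Brauer-type class; the hypothesis $\Delta_K\equiv 1\pmod 4$ together with $2\nmid[\cO_K:\cO]$ is exactly what forces the mod-$2$ Galois representation on $E_2$ to be as large/controlled as needed so that the contribution vanishes. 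Concretely, $\Delta_K\equiv 1\pmod 4$ means $2$ is unramified in $K$ (indeed $\Delta_K$ odd), and $2\nmid[\cO_K:\cO]$ means the CM is by an order of odd conductor, so $E$ is $2$-isogenous over $L$ to a curve with CM by $\cO_K$; one then analyzes the image of $\Gamma_L$ in $\Aut(E_2)\cong GL_2(\bbF_2)\cong S_3$ and sees that under these hypotheses the cohomology of the permutation part contributes nothing.

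Next I would handle $\NS(\overline E\times\overline E)$. For $E$ with CM by $\cO_K$, $\NS(\overline E\times\overline E)$ is a rank-$4$ lattice, $\NS(\overline E\times\overline E)\cong\bbZ^2\oplus\Hom(\overline E,\overline E)\cong\bbZ^2\oplus\cO_K$, with $\Gamma_L$ acting on the $\cO_K$ summand by its action on $\End\overline E=\cO_K$ (trivially if $K\subseteq L$, through $\Gal(K/L\cap K)$ otherwise) — this is the same kind of module already analyzed in Section~\ref{compute}. Its $H^1$ is a product of $H^1$ of trivial $\bbZ$-modules (which vanish) and $H^1(\Gal(K/L),\cO_K)$ when $K\not\subseteq L$, and the latter is killed by $2$; combined with the exceptional-divisor computation and a careful bookkeeping of how the two pieces glue inside $\Pic(\overline X)$, the $2$-torsion hypotheses make the whole $H^1(\Gamma_L,\Pic(\overline X))$ vanish.

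The main obstacle I expect is the gluing step: $\Pic(\overline X)$ is not simply a direct sum of $\NS(\overline E\times\overline E)$ and the permutation module on the $16$ exceptional classes — there is an index-$2$-type discrepancy (the "even" sublattice condition and the relation $\sum$ of exceptional classes being divisible by $2$ in a certain overlattice), so the long exact cohomology sequences involve connecting maps whose $2$-torsion could a priori contribute to $H^1(\Gamma_L,\Pic(\overline X))$. Controlling these connecting maps — showing the potentially nonzero $2$-torsion contributions die precisely because of $\Delta_K\equiv 1\pmod 4$ and odd conductor — is the delicate part; the rest is a sequence of by-now-standard Galois-cohomology computations for CM abelian surfaces and their Kummer quotients, for which one can cite the relevant results of Skorobogatov–Zarhin and the structure theory recalled above.
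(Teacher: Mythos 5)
There is a genuine gap here, and it is concentrated exactly where you defer to ``careful bookkeeping.'' The reduction you flag as the delicate part --- showing that the sixteen exceptional classes and the gluing of $\mathrm{NS}(\overline{E}\times\overline{E})$ into $\Pic(\overline{X})$ contribute nothing, so that $\Br_1(X)/\Br(L)\cong H^1(L,\mathrm{NS}(\overline{E}\times\overline{E}))\cong H^1(L,\End\overline{E})$ --- is precisely Proposition 1.4 of \cite{SZtorsion}, and it holds for the Kummer surface of \emph{any} abelian surface, with no condition whatsoever on $\Delta_K$ or on the conductor. In particular your suggestion that $\Delta_K\equiv 1\pmod 4$ and $2\nmid[\cO_K:\cO]$ are ``exactly what forces the mod-$2$ Galois representation on $E_2$'' to kill the permutation part is misplaced: $H^1$ of the permutation module on the exceptional classes vanishes automatically (by Shapiro it is a sum of groups $\Hom_{cts}(\Gamma,\bbZ)=0$), and the connecting maps are handled once and for all in \emph{loc.\ cit.} The hypotheses of the proposition play no role in this step.

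Where the hypotheses genuinely enter is the piece you dispose of in one sentence, and that computation is never done. Note first that $\End\overline{E}=\cO$, the possibly non-maximal order, not $\cO_K$. Since $\Gamma_{KL}$ acts trivially on $\cO$, we have $H^1(KL,\cO)=\Hom_{cts}(\Gamma_{KL},\bbZ^2)=0$, and inflation--restriction reduces everything to $H^1(\langle\tau\rangle,\cO)$ with $\tau$ complex conjugation, i.e.\ to the quotient $\{x\in\cO\mid x+\tau(x)=0\}/\{\tau(x)-x\mid x\in\cO\}$. Writing $\cO=\bbZ[f\alpha]$ with $f=[\cO_K:\cO]$ and $\alpha=(1+\sqrt{\Delta_K})/2$ --- already using $\Delta_K\equiv 1\pmod 4$ --- one checks that for $f$ odd both sets equal $f\sqrt{\Delta_K}\cdot\bbZ$, so the quotient vanishes; if instead $\Delta_K\equiv 0\pmod 4$ or $2\mid f$, the anti-invariants contain the image of $\tau-1$ with index $2$ and the group is $\bbZ/2$. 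Observing only that this $H^1$ ``is killed by $2$'' and deferring the rest therefore leaves the entire content of the statement unproved: the whole proposition is the assertion that this particular $2$-torsion group is trivial under the stated hypotheses.
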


\begin{proof}
By Proposition 1.4 of \cite{SZtorsion}, it suffices to show that $H^1(L,\cO)=0$. Inflation-restriction gives
\begin{eqnarray*}
0\rightarrow H^1(\Gal(KL/L),\cO)\rightarrow H^1(L,\cO)\rightarrow H^1(KL,\cO)&=\Hom_{cts}(\Gamma_{KL},\bbZ^2)=0.
\end{eqnarray*}
Therefore, 
$H^1(L,\cO)\cong H^1(\Gal(KL/L),\cO)$. If $K\subset L$ then \mbox{$H^1(\Gal(KL/L),\cO)=0$,} so suppose that $$\Gal(KL/L)=\langle\tau\rangle\cong \bbZ/2.$$  Then 
$$H^1(\Gal(KL/L),\cO)=\frac{\{x\in\cO\mid x+\tau(x)=0\}}{\{\tau(x)-x\mid x\in\cO\}}.$$
Writing $\cO=\bbZ[f\alpha]$, where $f=[\cO_K:\cO]$ and $\alpha=(1+\sqrt{\Delta_K})/2$, gives
$$\{x\in\cO\mid x+\tau(x)=0\}=\{\tau(x)-x\mid x\in\cO\}=f\sqrt{\Delta_K}\cdot\bbZ.$$
\end{proof}

By \eqref{eq:embedding}, the existence of a transcendental element of odd order in $\Br(E\times E)$ implies that $\Br(X)$ contains a transcendental element. The same cannot be said for transcendental elements of even order. For this reason, we concentrate on elliptic curves $E$ for which $\Br(E\times E)$ contains a transcendental element of odd order.

\begin{theorem}
\label{oddtor}
Let $E/\bbQ$ be an elliptic curve with complex multiplication by $\cO_K$ such that $\Br(E\times E)$ contains a transcendental element of odd order. Then \mbox{$K=\bbQ(\zeta_3)$} and $E$ has affine equation $y^2=x^3+2c^3$ for some squarefree $c\in \bbZ$. Furthermore, 
$$\Br(X)/\Br(\mathbb{Q})=\Br(E\times E)/\Br_1(E\times E)=\Br(E\times E)_3/\Br_1(E\times E)_3=(\bbZ/3)\eta\cong\bbZ/3$$ where $\eta$ denotes the image of complex conjugation in $\End E_3/(\bbZ[\zeta_3]\otimes\bbZ/3)$.
\end{theorem}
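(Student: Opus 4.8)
The plan is to work backwards from the general computation of $\Br(E\times E)/\Br_1(E\times E)$ carried out in Section \ref{compute}, combined with the constraints on $E/\bbQ$ coming from the theory of complex multiplication. First I would observe that an elliptic curve over $\bbQ$ with complex multiplication by $\cO_K$ forces $h(\cO_K)=1$, since $\bbQ(j(E))=\bbQ$ and $\bbQ(j(E))$ has degree $h(\cO_K)$ over $\bbQ$; moreover, such curves can only exist when the CM is by the maximal order and $K$ is one of the nine imaginary quadratic fields of class number one. So we are in the situation $L=\bbQ\subset H_K$ and Theorem \ref{LinK} applies: $m(\ell)=n(\ell)=0$ for all primes $\ell$, except possibly $\ell\le 3$ when $K=\bbQ(\zeta_3)$, $\ell=2$ when $K=\bbQ(i)$, and $\ell=2$ when $\Delta_K\equiv 1\pmod 8$. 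By Theorems \ref{quotient1} and \ref{quotient2}, $(\Br(E\times E)/\Br_1(E\times E))_{\ell^\infty}=0$ whenever $m(\ell)=0$. Hence the only way $\Br(E\times E)$ can contain a transcendental element of \emph{odd} order is if $m(3)\ge 1$, which requires $K=\bbQ(\zeta_3)$. This pins down $K$, and then $j(E)=0$, so $E$ has an affine model $y^2=x^3+D$ for some $D\in\bbZ\setminus\{0\}$, which we may take sixth-power-free.

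The next step is to use Example \ref{egzeta3}, which computes $m(3)$ for $E\colon y^2=x^3+D$ over $\bbQ$ with $K=\bbQ(\zeta_3)$: one has $m(3)\ge 1$ if and only if $4D$ is a cube in $\bbZ$ (equivalently in $\bbZ[\zeta_3]$), and in that case $m(3)=n(3)=1$. Writing $4D$ as a cube and absorbing cubes into the coordinate $x$, this is exactly the condition that $E$ is isomorphic over $\bbQ$ to a curve of the form $y^2=x^3+2c^3$ for some $c\in\bbZ$, which after removing cube factors from $c$ and then sixth powers we may take squarefree. Since $m(3)=1$, Theorem \ref{quotient2} (applied with $\ell=3$, odd, $K\not\subset\bbQ$) gives $(\Br(E\times E)/\Br_1(E\times E))_{3^\infty}=(\bbZ/3)\eta\cong\bbZ/3$, where $\eta$ is the image of the element $\tau$ — here complex conjugation, generating $\Gal(K/\bbQ)$ — in $\End E_3/(\bbZ[\zeta_3]\otimes\bbZ/3)$. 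Finally I would check that all other primary parts vanish: for odd $\ell\ne 3$ and for $\ell=2$, Theorem \ref{LinK} gives $m(\ell)=0$ (note $\Delta_K=-3\not\equiv 1\pmod 8$, so the only exceptional prime is $\ell\le 3$, i.e. $\ell=2,3$, and for $\ell=2$ one still has $m(2)=0$ unless $D$ is a cube — but if $4D$ is a cube and $D$ is a cube then $4$ is a cube, impossible, so in our case $m(2)=0$). Hence $\Br(E\times E)/\Br_1(E\times E)=(\Br(E\times E)/\Br_1(E\times E))_{3^\infty}\cong\bbZ/3$, and since $\Br(E\times E)_3/\Br_1(E\times E)_3$ already realizes this group, the two intermediate quotients in the statement coincide.

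The main obstacle is not any single hard computation but assembling the right chain of reductions: one must be careful that "CM by $\cO_K$ over $\bbQ$'' forces $h(\cO_K)=1$ and maximal order (so that Theorem \ref{LinK} is applicable with $L=\bbQ$), and one must correctly translate the arithmetic condition "$4D$ is a cube'' from Example \ref{egzeta3} into the normal form $y^2=x^3+2c^3$ with $c$ squarefree, tracking how quadratic twists and cube-scalings act on the coefficient. A subtle point worth spelling out is why $m(2)=0$ in this family: if $4D=e^3$ then $D=e^3/4$ is a cube in $\bbZ$ only if $4\mid e$, and then $4D=e^3$ forces $16\mid 4D$, i.e. $4\mid D$, contradicting that we have stripped sixth powers (or: $D$ a cube and $4D$ a cube would make $4$ a cube in $\bbZ$); so by Example \ref{egzeta3} we get $m(2)=0$ and the $2$-primary transcendental Brauer group vanishes, which is what lets us conclude that the full transcendental quotient is exactly $\bbZ/3$ and is generated by $\eta$.
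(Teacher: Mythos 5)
Your proposal is correct and follows essentially the same route as the paper's proof: reduce to $L=\bbQ(j(E))\subset H_K$ so that Theorem \ref{LinK} forces $K=\bbQ(\zeta_3)$ and leaves $\ell=3$ as the only possible odd prime with $m(\ell)\neq 0$, then invoke the Gr\"{o}ssencharacter computation of Example \ref{egzeta3} to see that $m(3)=1$ exactly when $4D$ is a cube (giving the normal form $y^2=x^3+2c^3$) and that $m(2)=0$ in that case, and finally apply Theorem \ref{quotient2}. Your explicit check that $D=2c^3$ cannot simultaneously be a cube (so $m(2)=0$) is the same observation the paper makes by citing the $m(2)$ computation of Example \ref{egzeta3}.
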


\begin{proof}
Setting $L=\bbQ=\bbQ(j(E))$ in Theorem \ref{LinK} shows that $K=\bbQ(\zeta_3)$. Since $\bbZ[\zeta_3]$ has class number $1$, $E$ is isomorphic over $\overline{\bbQ}$ to the elliptic curve $E'$ with affine equation $y^2=x^3+1$. Therefore, $E$ is the sextic twist of $E'$ by a class in $H^1(\bbQ,\mu_6)=\bbQ^\times/(\bbQ^\times)^6$. Consequently, $E$ has an affine equation of the form $y^2=x^3+D$ for some sixth-power-free $D\in\bbZ$. Example~\ref{egzeta3} shows that $m(\ell)=0$ for every odd prime $\ell$ with $\ell\neq 3$. Since $\Br(E\times E)$ contains a transcendental element of odd order, we have $m(3)\neq 0$. The computation of $m(3)$ in Example~\ref{egzeta3} shows that $m(3)=1$ and $4D$ is a cube in $\bbZ$. Now the computation of $m(2)$ in Example~\ref{egzeta3} gives $m(2)=0$. Thus, the statement on the transcendental Brauer group of $E\times E$ follows from Theorem~\ref{quotient2}. The statement for $X=\Kum(E\times E)$ follows from \eqref{eq:embedding} and Proposition \ref{Br1trivial}.
\end{proof}

\section{A transcendental Brauer-Manin obstruction to weak approximation}\label{obstruction}

Henceforth, for each $c\in\bbQ^\times$, let $E^c$ be the elliptic curve over $\bbQ$ 
with affine equation 
\[y^2=x^3+2c^3.\]
Let $X=\Kum(E^c\times E^c)$. An affine model for $X$ is 
\begin{equation}
u^2=(x^3+2c^3)(t^3+2c^3)
\end{equation}
Note that $X$ is independent of $c\in\bbQ^\times$, since $(x,t,u)\mapsto (x/c,t/c,u/c^3)$ gives the following alternative affine model for $X$
\begin{equation}
\label{eq:Kummer}
u^2=(x^3+2)(t^3+2).
\end{equation}

By Proposition \ref{Br1trivial}, $\Br_1(X)=\Br(\bbQ)$ and therefore there is no algebraic Brauer-Manin obstruction to weak approximation on $X$. By \eqref{eq:embedding},
$$\Br(X)/\Br(\bbQ)=\Br(X)_3/\Br_1(X)_3=\Br(E^c\times E^c)_3/\Br_1(E^c\times E^c)_3.$$

Let $\tau\in\Gamma_{\bbQ}\setminus \Gamma_{\bbQ(\zeta_3)}$ and let $\theta$ denote the image of $\tau$ in $\End E^c_3$. 
The image of $\tau$ generates $\End_{\Gamma_{\bbQ}}(E^c_3)/(\bbZ/3)\cong\Br(X)/\Br(\bbQ)\cong\bbZ/3$. Let $\cA\in \Br(X)\setminus\Br(\bbQ)$ be a corresponding generator of $\Br(X)/\Br(\bbQ)$. 

For a prime $\ell$, let 
\[
\xymatrix{\cup: H^1(\bbQ_\ell, E^c_3)\times H^1(\bbQ_\ell,E^c_3)\ar[r] & \Br(\bbQ_\ell)_3\ar[r]^{\mathrm{inv}_\ell} &\frac{1}{3}\bbZ/\bbZ}
\]
be the non-degenerate pairing given by the composition of the cup product, the Weil pairing and the local invariant.
Let $\theta^*$ denote the map induced by $\theta$ on $H^1(\bbQ_\ell,E^c_3)$. For $P\in E(\bbQ_\ell)$, let $\chi_P$ denote the image of $P$ under the homomorphism 
$$\chi: E^c(\bbQ_\ell)\rightarrow H^1(\bbQ_\ell,E^c_3).$$ 

\begin{proposition}
\label{prop:cupproduct}
Let $P,Q\in E^c(\bbQ_\ell)\setminus E^c_2$. The $\bbQ_\ell$-point $(P,Q)$ on $E^c\times E^c$ gives rise to a point $R\in X(\bbQ_\ell)$. We have
\begin{equation}
\label{evaluation}
\ev_{\cA,\ell}(R)=\chi_P\cup\theta^*(\chi_Q)\in\frac{1}{3}\bbZ/\bbZ.
\end{equation}
\end{proposition}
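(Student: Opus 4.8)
The plan is to trace the isomorphism \eqref{quotient} of Skorobogatov and Zarhin back to an explicit description of the Brauer class $\cA$ and its evaluation, following the recipe in \cite{SZtorsion} and the computation of Ieronymou--Skorobogatov in \cite{I-S}. First I would recall that under \eqref{n-torsion} with $n=3$, the class $\theta \in \End E^c_3/(\bbZ[\zeta_3]\otimes\bbZ/3) = \Hom(E^c_3,E^c_3)/(\End\overline{E^c}\otimes\bbZ/3)$ corresponds to the transcendental Brauer class $\cA$ on $\overline{E^c}\times\overline{E^c}$, and hence on $\overline{X}$ via the isomorphism $\Br(\overline{X})\cong\Br(\overline{E^c}\times\overline{E^c})$ of \cite{SZtorsion}, Proposition 1.3. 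The key input is the formula in \cite{SZtorsion} (see the proof of Theorem 2.4, or \cite{I-S} Proposition 3.3) expressing the evaluation of such a Brauer class at a point coming from a pair $(P,Q)$ as a cup product of the Kummer classes of $P$ and $Q$, twisted by the homomorphism $\theta$ and paired via the Weil pairing. The hypothesis $P,Q\notin E^c_2$ ensures that $(P,Q)$ maps to a smooth point $R$ of $X$ away from the exceptional locus of the Kummer construction, so that the comparison between $\Br(X)$ and $\Br(E^c\times E^c)$ is transparent at $R$.

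The main steps, in order: (1) Fix the identification $\Br(\overline X)[3]\cong\Hom(E^c_3,E^c_3)/(\bbZ[\zeta_3]\otimes\bbZ/3)$ and recall that $\cA$ is the lift to $\Br(X)$ of the $\Gamma_\bbQ$-invariant class $\theta$ — such a lift exists because $\Br_1(X)=\Br(\bbQ)$ by Proposition \ref{Br1trivial}, so $\Br(X)/\Br(\bbQ)\hookrightarrow\Br(\overline X)^{\Gamma_\bbQ}$ and in fact surjects onto $(\bbZ/3)\theta$ by Theorem \ref{oddtor}. (2) For a local point, use the fact (\cite{SZtorsion}, proof of Theorem 2.4; \cite{SZ}) that the transcendental Brauer class attached to $\psi\in\Hom(E_n,E'_n)$ on $E\times E'$, when evaluated at a rational point $(P,Q)$ with $P,Q\notin E_2,E'_2$, is given by the cup product $\chi_P\cup (\psi\circ\chi'_Q)$ under the pairing $H^1(\bbQ_\ell,E^c_3)\times H^1(\bbQ_\ell,E^c_3)\to\Br(\bbQ_\ell)$ built from the Weil pairing; here $\chi$ is the connecting map of the Kummer sequence $0\to E^c_3\to E^c\to E^c\to 0$. (3) Transport this through the Kummer map $E^c\times E^c\dashrightarrow X$: since $(P,Q)\notin (E^c\times E^c)_2$, the point $R$ lies in the locus where $X\to (E^c\times E^c)/\pm$ is an isomorphism, and the identification of Brauer groups is compatible with evaluation, so $\ev_{\cA,\ell}(R)$ equals the evaluation of the corresponding class on $E^c\times E^c$ at $(P,Q)$, which by step (2) is $\chi_P\cup\theta^*(\chi_Q)$ — noting that $\theta^*$, the map induced on $H^1$ by $\theta$, is precisely post-composition with $\psi=\theta$.

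The main obstacle I expect is step (2)/(3): making precise the compatibility between the abstract cohomological construction of the Brauer class in \cite{SZtorsion} and the concrete cup-product evaluation formula, and checking that no sign or normalisation discrepancy arises when passing from $E^c\times E^c$ to the Kummer surface $X$. This is essentially the content of \cite{I-S} Proposition 3.3 and its proof, applied here with $E=E'=E^c$ and $\psi=\theta$; the only genuinely new point is that the CM structure forces $\theta$ to be complex conjugation rather than an arbitrary homomorphism, which does not affect the evaluation formula. I would therefore structure the proof as a reduction to \cite{I-S} Proposition 3.3 (or the relevant statement in \cite{SZtorsion}), verifying that the hypotheses transfer, rather than redoing the cohomological bookkeeping from scratch. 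The condition $P,Q\notin E^c_2$ is exactly what is needed to stay in the good locus of the Kummer map and to apply that formula verbatim.
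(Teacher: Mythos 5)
Your proposal is correct and follows essentially the same route as the paper, which proves this proposition simply by citing the results of \cite{SZtorsion}, Section 3, with the details as explained in Section 5.1 of \cite{I-S}. Your more detailed outline (identifying $\cA$ with the class of $\theta$, invoking the cup-product evaluation formula, and checking compatibility with the Kummer construction away from the $2$-torsion) is just an expansion of the same reduction to those references.
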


\begin{proof}
The statement follows from the results of \cite{SZtorsion}, Section 3. The details are explained in Section 5.1 of \cite{I-S}.
\end{proof}

\begin{theorem}\label{thm5.2}
Let $\cA\in\Br(X)_3\setminus\Br(\bbQ)$.
Let $\nu\neq 3$ be a rational place. Then the evaluation map $\ev_{\cA,\nu}:X(\bbQ_{\nu})\rightarrow \Br(\bbQ_\nu)_3$ is zero. 
\end{theorem}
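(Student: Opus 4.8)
The plan is to reduce the statement, via Proposition~\ref{prop:cupproduct}, to a purely local cup‑product computation, and then to use the fact that $X=\Kum(E^c\times E^c)$ does not depend on $c\in\bbQ^\times$ in order to work with a model $E^c$ that has good reduction at the place under consideration. First I would dispose of $\nu=\infty$, where $\Br(\bbR)_3=0$ and so $\ev_{\cA,\infty}$ is zero for trivial reasons. From now on fix a prime $\ell\neq 3$.

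The map $\ev_{\cA,\ell}\colon X(\bbQ_\ell)\to\Br(\bbQ_\ell)_3$ is continuous with discrete target, hence locally constant. Let $V\subseteq X$ be the open subset which is the image of $(E^c\setminus E^c_2)\times(E^c\setminus E^c_2)$; its complement is a finite union of curves (the sixteen exceptional divisors together with the images of $E^c_2\times E^c$ and of $E^c\times E^c_2$). Since $X$ is a smooth surface, $(X\setminus V)(\bbQ_\ell)$ is $\ell$‑adically nowhere dense, so $V(\bbQ_\ell)$ is dense in $X(\bbQ_\ell)$ and, by local constancy, it suffices to prove $\ev_{\cA,\ell}(R)=0$ for $R\in V(\bbQ_\ell)$. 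Such an $R$ is the image of a pair $(P,Q)$ with $P,Q\notin E^c_2$ that is rational either over $\bbQ_\ell$ or, in case the two preimages $(P,Q),(-P,-Q)$ of $R$ are conjugate, over a quadratic extension $M/\bbQ_\ell$; in the latter case $\ev_{\cA,\ell}(R)=0$ if and only if its restriction to $\Br(M)_3$ vanishes, because $\Br(\bbQ_\ell)_3\to\Br(M)_3$ is multiplication by $[M:\bbQ_\ell]=2$, which is invertible on $3$‑torsion. Thus in all cases it is enough to show that $\chi_P\cup\theta^*(\chi_Q)=0$ in $\Br(F)_3$ for a local field $F$ of residue characteristic $\ell$ over which $(P,Q)$ is rational.

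Since $X$ is independent of $c$, and Proposition~\ref{prop:cupproduct} (and its analogue over any local field, which follows by the same argument) is available for every $c$, I would now choose $c$ so that $E^c$ has good reduction at $\ell$: for $\ell\neq 2$ take $c=1$, so $E^1\colon y^2=x^3+2$ has discriminant $-2^6 3^3$; for $\ell=2$ take $c=2$, so $E^2\colon y^2=x^3+16\cong y^2+y=x^3$ has good reduction at $2$. With this choice $E^c_3$ is unramified at $\ell$ (here $\ell\neq 3$), so the Kummer class $\chi_P$ lies in $H^1_{\mathrm{nr}}$; and since $\theta\in\End_{\Gamma_\bbQ}E^c_3$ (Theorem~\ref{quotient2}, with $m(3)=1$), $\theta$ commutes with the Galois action and hence $\theta^*$ preserves $H^1_{\mathrm{nr}}$, so $\theta^*(\chi_Q)$ is also unramified. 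The cup product of two unramified classes, formed via the Weil pairing of the good reduction, is inflated from $H^2(\widehat{\bbZ},\mu_3)=0$ (cohomological dimension $1$), hence is zero. The same applies verbatim over any finite extension $F$ of residue characteristic $\ell$, which settles the quadratic‑extension case, and completes the argument.

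The point I expect to require the most care is $\ell=2$: one must exhibit a twist $E^c$ that acquires good reduction at $2$ without changing $X$ — this is the role of $c=2$ (equivalently $c=1/2$) — and one must check that Proposition~\ref{prop:cupproduct} and the identification of Kummer classes with unramified cohomology classes use only $\ell\neq 3$, not $\ell\neq 2$, which is the case. The density of $V(\bbQ_\ell)$ and the reduction to rational pairs are routine once phrased through the étale quotient $E^c\times E^c\to X$ and the $\ell$‑adic inverse function theorem, and I would keep that discussion brief.
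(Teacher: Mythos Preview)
Your argument is correct and follows a genuinely different route from the paper's. The paper handles $\ell\nmid 6$ by quoting the good-reduction result of Colliot-Th\'el\`ene--Skorobogatov together with Matsumoto's comparison of good reduction for abelian and Kummer surfaces, and then treats $\ell=2$ separately by a \emph{point-dependent} quadratic twist: writing $R=(x_0,t_0,u_0)$ on the affine model and setting $d=t_0^3+2$, the point lifts to a $\bbQ_2$-rational pair on $E^d\times E^d$, after which one does a case split according to whether $E^d$ has additive or good reduction at $2$, the good case being finished by a diagonal argument showing the pairing is constant on $E^d(\bbQ_2)/3\cong\bbZ/3$. Your approach instead fixes one twist $E^c$ with good reduction at $\ell$ (notably $c=2$ at $\ell=2$, using $y^2=x^3+16\cong y^2+y=x^3$) and proves vanishing uniformly via the fact that, for $\ell\neq 3$ and good reduction, Kummer classes are unramified, $\theta\in\End_{\Gamma_\bbQ}E^c_3$ preserves $H^1_{\mathrm{nr}}$, and the cup product of unramified classes lands in $H^2(\widehat{\bbZ},\mu_3)=0$.

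What each approach buys: yours is more self-contained (no appeal to \cite{goodred} or \cite{Matsumoto}) and treats all $\ell\neq 3$ on the same footing, at the cost of the extra bookkeeping with the quadratic extension $M$ when the two preimages of $R$ are conjugate rather than individually rational---a case the paper sidesteps because its point-dependent twist $E^d$ is chosen precisely so that both $P$ and $Q$ are $\bbQ_\ell$-rational. One small point worth making explicit in your write-up: the element $\cA$ is fixed in the statement, while Proposition~\ref{prop:cupproduct} is tied to a choice of $c$ and its $\theta$; you are implicitly using that $\Br(X)_3/\Br(\bbQ)_3\cong\bbZ/3$, so any nontrivial $\cA$ is $\pm$ the generator coming from your chosen $c$, and the evaluation map for one vanishes iff it does for the other.
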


\begin{proof}
The statement for the infinite place is clear, since $\Br(\bbR)=\bbZ/2$ has trivial $3$-torsion. By \cite{goodred}, finite primes of good reduction do not appear in the description of the Brauer-Manin set. Lemma 4.2 of \cite{Matsumoto} shows that odd primes of good reduction for an abelian surface are primes of good reduction for the corresponding Kummer surface. Thus, by \eqref{eq:Kummer}, $\ev_{\cA,\ell}$ is zero for every finite prime $\ell\nmid 6$. From now on, let $\ell=2$, and let $R\in X(\bbQ_{2})$. We will show that $\ev_{\cA,2}(R)=0$. We can represent $R$ by $(x_0,t_0,u_0)$ satisfying \eqref{eq:Kummer}. Let $d_R=t_0^3+2$. Since the evaluation map $\ev_{\cA,2}:X(\bbQ_2)\rightarrow \Br(\bbQ_2)_3$ is locally constant, we are free to use the implicit function theorem to replace $R$ by a point $R'=(x_1,t_1,u_1)\in X(\bbQ_{2})$, sufficiently close to $R$, such that $d=d_{R'}\in\bbQ^\times$ and $u_1\neq 0$. Now $R'$ gives rise to $P=(dx_1,du_1)\in E^d(\bbQ_2)$ and $Q=(dt_1,d^2)\in E^d(\bbQ_2)$. Recalling that $X=\Kum(E^d\times E^d)$, we apply Proposition 
\ref{prop:cupproduct} to see that
\begin{equation}
\label{eq:cupprod2}
\ev_{\cA,2}(R')=\chi_P\cup\theta^*(\chi_Q)\in\frac{1}{3}\bbZ/\bbZ.
\end{equation}

We will study $E^d(\bbQ_2)$. Denote by $E^d_0(\bbQ_2)$ the $\bbQ_2$-points of $E^d$ that reduce to smooth points on the reduction of $E^d$ at $2$. Rescaling the coordinates $x,y$ of $E^d$ if necessary, we may assume that the $2$-adic valuation of $d$ satisfies $\ord_2(d)\in\{0,1\}$. 

First suppose that $\ord_2(d)=0$. Then the equation $y^2=x^3+2d^3$ is a minimal Weierstrass equation, and $E^d/\bbQ_2$ has additive reduction. Since $2\nmid d$, Tate's algorithm (as described in  \cite{Silverman}, Ch. IV, \S9) terminates at Step 3 and $E^d(\bbQ_2)=E^d_0(\bbQ_2)$. Alternatively, considering the $2$-adic valuations of the coordinates of points in $E^d(\bbQ_2)$ allows one to verify directly that $E^d(\bbQ_2)=E^d_0(\bbQ_2)$.

Now suppose that $d=2e$ for some $e\in\bbZ_2^*$. Then the equation $y^2=x^3+2d^3$ is no longer minimal. The transformation $y=y'+2^2$ gives the equation 
\begin{equation}
\label{eq:weierstrass}
y'^2+2^3y'=x^3+2^4(e^3-1).
\end{equation} 
Note that $e\in\bbZ_2^*$, so $e^3\equiv e\equiv \pm1\pmod{4}$. Suppose that $e\in - 1+4\bbZ_2$. Then we have $\ord_2(2^4(e^3-1))<6$, which implies that \eqref{eq:weierstrass} is minimal and $E^d/\bbQ_2$ has additive reduction. Applying Tate's algorithm, or considering the $2$-adic valuations of the coordinates of points in $E^d(\bbQ_2)$, we see that $E^d(\bbQ_2)=E^d_0(\bbQ_2)$.

In both the case $\ord_2(d)=0$ and the case $d\in -2+8\bbZ_2$, we may apply Theorem 1 of \cite{Rene} to see that $E^d(\bbQ_2)=E^d_0(\bbQ_2)$ is topologically isomorphic to $\bbZ_2$, which is \mbox{$3$-divisible}. Therefore, if $\ord_2(d)=0$ or $d\in -2+8\bbZ_2$ then $\chi=0$.

Finally, suppose that $d=2e$ with $e\in 1+4\bbZ_2$. 
Then \eqref{eq:weierstrass} is no longer minimal and the transformation $(x,y')=(2^2x'',2^3y'')$ gives  $y''^2+y''=x''^3+\frac{e^3-1}{4}$, which is a minimal Weierstrass equation for $E^d$. In particular, $E^d/\bbQ_2$ has good reduction. Therefore, 
\[E^d(\bbQ_2)/E^d_1(\bbQ_2)\cong\tilde{E^d}(\bbF_2)\cong\bbZ/3,\] 
where $E^d_1(\bbQ_2)$ denotes the kernel of the reduction map and $\tilde{E^d}$ denotes the reduced elliptic curve. Thus, $3E^d(\bbQ_2)\subset E^d_1(\bbQ_2)$. We will show that this inclusion is an equality. The standard filtration on the $\bbQ_2$-points of $E^d$ gives
$$E^d(\bbQ_2)\supset E^d_1(\bbQ_2)\supset E^d_2(\bbQ_2) \supset \dots $$
 The theory of formal groups shows that $E^d_2(\bbQ_2)\cong 4\bbZ_2$. Hence, $E^d_2(\bbQ_2)$ is $3$-divisible. Since $E^d_1(\bbQ_2) /E^d_2(\bbQ_2)\cong \bbZ/2$, it follows that $E^d_1(\bbQ_2)$ is $3$-divisible. Therefore, $$E^d_1(\bbQ_2)=3E^d_1(\bbQ_2)=3E^d(\bbQ_2).$$
Thus, $\chi$ factors through $E^d(\bbQ_2)/3E^d(\bbQ_2) = E^d(\bbQ_2)/E^d_1(\bbQ_2)\cong\bbZ/3$ and it is enough to show that 
\[\chi_P\cup\theta^*(\chi_P)=0\]
for any $P\in E^d(\bbQ_2)\setminus E^d_1(\bbQ_2)$ with $2P\neq 0$. The diagonal embedding $E^d\rightarrow E^d\times E^d$ induces a map $E^d\rightarrow X$ whose image is a copy of $\bbP^1_\bbQ$. The restriction of $\cA$ to $\bbP^1_{\bbQ}$ is in $\Br(\bbP^1_{\bbQ})=\Br(\bbQ)$. In other words, $\cA$ restricts to a constant algebra on the image of $E^d$ in $X$. Thus, the evaluation of $\cA$ at a point on $X$ corresponding to $(P,P)$ on $E^d(\bbQ_2)\times E^d(\bbQ_2)$ is independent of the point $P$. Hence, it suffices to show that $\chi_P\cup\theta^*(\chi_P)=0$ for a single $P\in E^d(\bbQ_2)$. Taking $P\in 3E^d(\bbQ_2)$ completes the proof.
\end{proof}

The main result of this section is the following theorem.
\begin{theorem}
\label{obstructionthm}
The evaluation map 
\[\ev_{\cA, 3}:X(\bbQ_3)\rightarrow \frac{1}{3}\bbZ/\bbZ\]
is surjective. Consequently, 
\[X(\bbA_{\bbQ})^{\Br(X)}=X(\bbQ_3)_0\times X(\bbR)\times \prod_{\ell\neq 3}{X(\bbQ_{\ell})}\ \subsetneq\  X(\bbA_{\bbQ})\]
where $X(\bbQ_3)_0$ denotes the points $P\in X(\bbQ_3)$ with $\ev_{\cA,3}(P)=0$, and the product runs over prime numbers $\ell\neq 3$.
\end{theorem}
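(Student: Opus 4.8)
The ``consequently'' part is immediate from the previous theorem together with surjectivity at $\nu = 3$: the preceding results show $\ev_{\cA,\nu}$ is zero for every place $\nu \neq 3$, so the Brauer--Manin pairing against $\cA$ (and hence against all of $\Br(X)$, since $\Br(X)/\Br(\bbQ) = (\bbZ/3)\cA$ by Theorem~\ref{oddtor} and constant algebras pair trivially everywhere) depends only on the local invariant at $3$; thus a point is in $X(\bbA_\bbQ)^{\Br(X)}$ iff its $3$-adic component lies in $X(\bbQ_3)_0$. Non-emptiness of both sides (so that the inclusion is strict rather than vacuously an equality of empty sets) follows from surjectivity of $\ev_{\cA,3}$, which in particular gives $X(\bbQ_3)_0 \subsetneq X(\bbQ_3)$. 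So the entire content is the surjectivity of $\ev_{\cA,3}$, and I would focus the proof there.

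\textbf{Reducing to a cup-product computation.} By Proposition~\ref{prop:cupproduct}, for $P,Q \in E^c(\bbQ_3) \setminus E^c_2$ the associated point $R \in X(\bbQ_3)$ satisfies $\ev_{\cA,3}(R) = \chi_P \cup \theta^*(\chi_Q)$. So it suffices to exhibit a single pair $(P,Q)$ of $3$-adic points of (some twist) $E^c$ for which this cup product is nonzero in $\frac{1}{3}\bbZ/\bbZ$; since $\frac13\bbZ/\bbZ \cong \bbZ/3$, any nonzero value generates, giving surjectivity. The natural choice of twist is $c = 1$, i.e. $E^1 : y^2 = x^3 + 2$, which has good reduction at $3$ (the discriminant is $-27 \cdot 16 = -432$... one should double-check the minimal model, but in any case $y^2 = x^3+2$ over $\bbQ_3$ is the relevant curve). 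The first step is to understand $E^1(\bbQ_3)$, its reduction type at $3$, and the group $E^1(\bbQ_3)/3E^1(\bbQ_3)$ together with the image of $\chi : E^1(\bbQ_3) \to H^1(\bbQ_3, E^1_3)$; the target $H^1(\bbQ_3, E^1_3)$ and the pairing $\cup$ into $\Br(\bbQ_3)_3 \xrightarrow{\mathrm{inv}_3} \frac13\bbZ/\bbZ$ are finite-dimensional $\bbF_3$-objects that can be computed explicitly, e.g. via the Kummer sequence and local Tate duality (the pairing is the Tate local duality pairing twisted by $\theta$, which is perfect on $H^1(\bbQ_3,E^1_3)$).

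\textbf{The concrete computation.} I would pick down-to-earth $3$-adic points. Since $2 \equiv 8 \pmod 9$ is a cube mod small powers but one wants points reducing well, a clean approach: find $x_0 \in \bbQ_3$ with $x_0^3 + 2$ a nonzero square in $\bbQ_3$, take $P = (x_0, y_0)$, and similarly $Q$, arranged so that $P, Q$ land in distinct, non-trivial classes of $E^1(\bbQ_3)/E^1_1(\bbQ_3)$ (or whatever controls $\chi$). Then one computes $\chi_P$, $\chi_Q$ as explicit cocycles (using that the $3$-division field of $E^1$ over $\bbQ_3$ is $\bbQ_3(\zeta_3, \sqrt[3]{\ast})$-type, concretely describable since $E^1$ has CM by $\bbZ[\zeta_3]$ and $K = \bbQ(\zeta_3)$ is ramified at $3$), applies $\theta^*$ (which interchanges the two ``halves'' of the $\bbZ[\zeta_3]$-module structure), cups, and reads off the invariant via the explicit reciprocity / Hilbert-symbol formula at $3$. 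Alternatively and perhaps more robustly: use a counting / non-degeneracy argument — the pairing $H^1(\bbQ_3,E^1_3) \times H^1(\bbQ_3,E^1_3) \to \bbZ/3$ is perfect, $\im(\chi)$ is its own annihilator's complement is a maximal isotropic subspace of dimension $\frac12 \dim H^1 = \dim E^1(\bbQ_3)/3 = 1$ (using $E^1(\bbQ_3)/3 \cong \bbZ/3$ from good reduction, as in the $\ell=2$ argument); then one must check that $\theta^*$ does \emph{not} preserve this line, so that $\chi_P \cup \theta^*(\chi_P) \neq 0$ for $P$ generating the line. That $\theta$ (complex conjugation, acting $\bbZ[\zeta_3]$-semilinearly) moves the line $\im(\chi)$ is exactly where the arithmetic of $E^1$ at $3$ — and the fact that $3$ ramifies in $K$ so the two CM-eigenspaces are genuinely swapped — enters.

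\textbf{Main obstacle.} The crux, and the hardest part, is establishing $\chi_P \cup \theta^*(\chi_P) \neq 0$: this is not formal, since for a point $P$ with the image of $\chi$ isotropic under the \emph{untwisted} Weil-pairing cup product (which it always is, being the image of the Kummer map — self-cup-products of Kummer classes vanish), one genuinely needs the twist by $\theta$ to break isotropy, and verifying this requires either an honest local cocycle computation at $p = 3$ or a careful structural argument about how complex conjugation acts on $E^1(\bbQ_3)/3E^1(\bbQ_3)$ versus $E^1_3(\bbQ_3)$ inside $H^1(\bbQ_3, E^1_3)$. I expect the cleanest route is to exploit that $3$ is (totally) ramified in $K/\bbQ$ and that $E^1$ has good reduction at $3$ with $E^1(\bbF_3)$ of order $3$ (coming from $y^2 = x^3 + 2$ over $\bbF_3$, where one checks $\#E^1(\bbF_3) = 3$), pinning down $E^1(\bbQ_3)/3 \cong \bbZ/3$ and the Galois action of $\Gamma_{\bbQ_3}$ on $E^1_3$ precisely enough to compute the twisted pairing by hand; this is a finite, explicit $\bbF_3$-linear-algebra-plus-Hilbert-symbol calculation, but it is the genuine mathematical content of the theorem.
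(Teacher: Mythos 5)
Your reduction of the ``consequently'' part to surjectivity at $3$, and your reduction of surjectivity to exhibiting one nonzero value $\chi_P\cup\theta^*(\chi_Q)$ via Proposition \ref{prop:cupproduct}, are both correct and agree with the paper (this is exactly Lemma \ref{lem:sufficient}). The gap is in the concrete plan. You fix $c=1$, i.e.\ $E^1:y^2=x^3+2$, and propose to win by showing that $\theta^*$ does not preserve the line $\im(\chi)\subset H^1(\bbQ_3,E^1_3)$, so that $\chi_P\cup\theta^*(\chi_P)\neq 0$ for a generator $P$. This cannot work, for a reason the paper itself exploits in the proof that $\ev_{\cA,2}=0$: the diagonal $E^c\to E^c\times E^c$ maps to a copy of $\bbP^1_{\bbQ}$ in $X$, on which $\cA$ restricts to a constant algebra, and evaluating at a point of $3E^c(\bbQ_\nu)$ shows that constant is $0$; hence $\chi_P\cup\theta^*(\chi_P)=0$ for \emph{every} $P$ and every place. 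So the twisted pairing is alternating on $\im(\chi)$. For $c=1$ one checks $E^1_3(\bbQ_3)=0$ (neither $2$ nor $-6$ is a square in $\bbQ_3$), so $\#H^1(\bbQ_3,E^1_3)=9$ and $E^1(\bbQ_3)/3\cong\bbZ/3$: the image of $\chi$ is a single line, an alternating form on it is identically zero, and therefore \emph{every} point of $X(\bbQ_3)$ lifting to $E^1\times E^1$ evaluates to $0$. Equivalently, since the untwisted pairing is perfect and $\im(\chi)$ is self-annihilating, the vanishing of the self-pairing forces $\theta^*$ to preserve the line --- the opposite of what you need. (Your assertion that $E^1$ has good reduction at $3$ is also false: $x^3+2\equiv(x-1)^3 \pmod 3$, so the reduction is cuspidal and the reduction type is additive; but this is secondary.)

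The paper escapes this by choosing the twist $c=3$, for which $E^{(3)}:y^2=x^3+54$ has $(-6,9\sqrt{-2})\in E^{(3)}_3(\bbQ_3)$ (as $-2$ is a square in $\bbQ_3$), hence $\#H^1(\bbQ_3,E^{(3)}_3)=81$ and $E^{(3)}(\bbQ_3)/3\cong(\bbZ/3)^2$ with explicit generators $P=(3,9)$ and $Q=(4,\sqrt{118})$ (Lemma \ref{lem:generators}); an alternating form on a two-dimensional space can be nondegenerate, so the strategy is to show $\theta^*(\chi_P)\notin\im(\chi)$ and then pair against a \emph{different} point $Q$. That final step is genuinely computational (Proposition \ref{prop:3}): one writes down the degree-$9$ polynomial for $3R=P$ and the cubics for $(1-\zeta_3)S=P,Q$, distinguishes ramified from unramified cubic extensions of $\bbQ_3(\zeta_3)$, and derives a contradiction from $\theta^*(\chi_P)=a\chi_P+b\chi_Q$ using that $\theta$ acts as $-1$ on $E_{(1-\zeta_3)}$. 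Your proposal correctly anticipates that some such local computation is the crux, but as written it is aimed at a twist on which the answer is provably zero, so the key existence statement would fail.
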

Theorem \ref{obstructionthm} will be proved via several auxiliary results. 

\begin{lemma}
\label{lem:sufficient}
In order to show that $\ev_{\cA,3}:X(\bbQ_3)\rightarrow\frac{1}{3}\bbZ/\bbZ$ is surjective, it is enough to exhibit $c\in \bbQ^\times$ and $P\in E^c(\bbQ_3)$ such that $\theta^*(\chi_P)$ is not in the image of $E^c(\bbQ_3)$ inside $H^1(\bbQ_3, E^c_3)$.
\end{lemma}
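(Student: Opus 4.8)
The plan is to reduce the surjectivity of $\ev_{\cA,3}$ to a purely local computation using Proposition \ref{prop:cupproduct} and the non-degeneracy of the cup product pairing on $H^1(\bbQ_3,E^c_3)$. By Proposition \ref{prop:cupproduct}, for $P,Q\in E^c(\bbQ_3)\setminus E^c_2$ the corresponding point $R\in X(\bbQ_3)$ satisfies $\ev_{\cA,3}(R)=\chi_P\cup\theta^*(\chi_Q)$. Since the pairing $\cup$ on $H^1(\bbQ_3,E^c_3)\times H^1(\bbQ_3,E^c_3)$ is non-degenerate, the image of $\ev_{\cA,3}$ is nonzero (hence all of $\frac{1}{3}\bbZ/\bbZ$) as soon as we can find some $c\in\bbQ^\times$ and some $P\in E^c(\bbQ_3)$ for which $\theta^*(\chi_P)$ pairs nontrivially with the image $\chi(E^c(\bbQ_3))\subset H^1(\bbQ_3,E^c_3)$.

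The first step is to recall (from local Tate duality, or from \cite{SZtorsion} Section 3 / \cite{I-S} Section 5) that the image $\chi(E^c(\bbQ_3))$ of the Kummer map is its own orthogonal complement under $\cup$ up to the subtlety that $E^c_3$ need not be self-dual as a Galois module — here one uses the Weil pairing to identify $E^c_3$ with its Cartier dual, so that $\chi(E^c(\bbQ_3))^\perp=\chi(E^c(\bbQ_3))$ exactly. Granting this, $\theta^*(\chi_P)$ pairs trivially with all of $\chi(E^c(\bbQ_3))$ if and only if $\theta^*(\chi_P)\in\chi(E^c(\bbQ_3))$. Therefore $\ev_{\cA,3}$ is surjective precisely when there exists $c$ and $P$ with $\theta^*(\chi_P)\notin\chi(E^c(\bbQ_3))$, which is the assertion of the lemma. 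One must be slightly careful that the statement of Proposition \ref{prop:cupproduct} requires $P,Q\notin E^c_2$; but since $\chi$ is a homomorphism and $E^c(\bbQ_3)\setminus E^c_2$ is dense and generates, and $\ev_{\cA,3}$ is locally constant, restricting to such points loses no information about the image.

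The main obstacle — really the only content of the lemma — is justifying that $\chi(E^c(\bbQ_3))$ is exactly its own annihilator under the composed pairing $\cup$, i.e. that the pairing descends to a \emph{perfect} pairing on $H^1(\bbQ_3,E^c_3)/\chi(E^c(\bbQ_3))$ paired with $\chi(E^c(\bbQ_3))$, equivalently on $H^1(\bbQ_3,E^c_3)/\chi(E^c(\bbQ_3))$ with itself via the induced form being zero on the subgroup and nondegenerate on the quotient. This is the standard fact that the image of the local Kummer sequence $E^c(\bbQ_3)/3 \hookrightarrow H^1(\bbQ_3,E^c_3)$ is a Lagrangian (maximal isotropic) subspace for the local Tate pairing twisted by the Weil pairing — isotropy is a consequence of the compatibility of cup product with the connecting map and the fact that $H^2$ of the elliptic curve contributes nothing, and maximality is a dimension count using $\#H^1(\bbQ_3,E^c_3)=\#E^c(\bbQ_3)/3 \cdot \#E^c(\bbQ_3)[3] \cdot \#(\text{local term})$ together with $\#\bbQ_3/(\bbQ_3^\times)^3$ bookkeeping; I would cite \cite{SZtorsion} Section 3 and \cite{I-S} Section 5 for the precise form rather than reproving it.

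Concretely the proof reads: if some $c,P$ as in the hypothesis exist, then $\theta^*(\chi_P)\notin\chi(E^c(\bbQ_3))=\chi(E^c(\bbQ_3))^\perp$, so there is $Q\in E^c(\bbQ_3)$ with $\chi_Q\cup\theta^*(\chi_P)\neq 0$ in $\frac13\bbZ/\bbZ$; by symmetry of the setup (or by replacing $P,Q$ and noting $\theta^*$ is an isometry up to the fixed sign coming from $\theta$ acting on the Weil pairing) we get $\chi_P\cup\theta^*(\chi_Q)\neq 0$ for suitable $P,Q\notin E^c_2$, and then Proposition \ref{prop:cupproduct} identifies this nonzero value with $\ev_{\cA,3}(R)$ for the corresponding $R\in X(\bbQ_3)$, so $\ev_{\cA,3}$ takes a nonzero value in the cyclic group $\frac13\bbZ/\bbZ$ and is thus surjective. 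The one point needing a line of care is the replacement ensuring $P,Q\notin E^c_2$: since $E^c_2\subset E^c(\bbQ_3)$ is finite and $\chi$, $\theta^*$ are continuous homomorphisms while the pairing is continuous and the target discrete, a point pairing nontrivially can be perturbed within its coset mod $3E^c(\bbQ_3)$ to avoid $E^c_2$ without changing the value of the pairing.
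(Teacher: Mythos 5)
Your proof is correct and follows essentially the same route as the paper: use the fact that the image of the local Kummer map is maximal isotropic to produce $Q\in E^c(\bbQ_3)$ with $\chi_Q\cup\theta^*(\chi_P)\neq 0$, feed the pair into Proposition \ref{prop:cupproduct}, and conclude surjectivity from the cyclicity of $\frac{1}{3}\bbZ/\bbZ$. The only cosmetic difference is your handling of the condition $P,Q\notin E^c_2$: the paper simply observes that any $2$-torsion point has $\chi_P=\chi_{3P}=3\chi_P=0$, so such points are automatically excluded by the hypotheses, making your perturbation argument (and the symmetry discussion for the order of the factors, which is handled by just taking the point coming from $(Q,P)$) unnecessary but harmless.
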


\begin{proof}
Suppose that $P\in E^c(\bbQ_3)$ is such that $\theta^*(\chi_P)$ is not in the image of $E^c(\bbQ_3)$ inside $H^1(\bbQ_3, E^c_3)$. Since the image of $E^c(\bbQ_3)$ is a maximal isotropic subspace inside $H^1(\bbQ_3, E^c_3)$, there exists $Q\in E^c(\bbQ_3)$ such that $\chi_Q\cup\theta^*(\chi_P)\neq 0$. Note that $P,Q\notin E^c_2$ because if, for example, $2P=0$ then $\chi_P=\chi_{3P}=0$. Now by Proposition \ref{prop:cupproduct}, the point $R\in X(\bbQ_3)$ coming from $(Q,P)\in E^c(\bbQ_3)\times E^c(\bbQ_3)$ satisfies
\[\ev_{\cA,3}(R)=\chi_Q\cup\theta^*(\chi_P)\neq 0.\]
Surjectivity follows since for every $n\in\bbZ$, $\chi_{nQ}\cup \theta^*(\chi_P)=n(\chi_{Q}\cup \theta^*(\chi_P))$.
\end{proof}

In Proposition \ref{prop:3}, we will show that we can take $c=3$ and $P=(3,9)$ in Lemma \ref{lem:sufficient}. 
From now on, let $E=E^{(3)}$ be the elliptic curve with affine equation $y^2=x^3+2\cdot 3^3$. First, we determine the group $E(\bbQ_3)/3$ and give explicit generators. 

\begin{lemma}
\label{lem:generators}
We have
$E(\bbQ_3)/3\cong (\bbZ/3)^2$, with generators $P=(3,9)$ and $Q=(4,\sqrt{2\cdot 59})$.
\end{lemma}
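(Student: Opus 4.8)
\textbf{Proof proposal for Lemma \ref{lem:generators}.}

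The plan is to compute $E(\bbQ_3)/3E(\bbQ_3)$ directly from the structure of the group of local points, and then to verify that the two proposed points map to independent classes. First I would observe that $E=E^{(3)}$ has the Weierstrass equation $y^2=x^3+54$, and determine its reduction type at $3$. Since $3\mid 54$ but $3^4\nmid 54$, one runs Tate's algorithm (see \cite{Silverman}, Ch.~IV, \S9): the equation is already minimal at $3$ and the reduction is additive, with a small Kodaira type. The key structural input is then the filtration
\[
E(\bbQ_3)\supset E_0(\bbQ_3)\supset E_1(\bbQ_3)\supset E_2(\bbQ_3)\supset\cdots
\]
where $E_0(\bbQ_3)$ is the set of points with nonsingular reduction, $E_1(\bbQ_3)$ is the kernel of reduction, and the higher $E_n(\bbQ_3)$ come from the formal group. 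As in the proof of the previous theorem, the theory of formal groups gives $E_n(\bbQ_3)\cong 3^{n-1}\bbZ_3$ for $n$ large enough (here one must be a little careful because $p=3$ is small: the relevant estimate is $E_n(\bbQ_3)\cong 3^{n}\bbZ_3$ for $n\geq 1$ with $n>1/(p-1)=1/2$, so $n\geq 1$ already suffices, but I would double-check the $n=1$ step), so $E_1(\bbQ_3)$ is $3$-divisible and contributes nothing to $E(\bbQ_3)/3$. Hence $E(\bbQ_3)/3E(\bbQ_3)\cong (E(\bbQ_3)/E_1(\bbQ_3))/3$, a quotient of the finite group $E(\bbQ_3)/E_1(\bbQ_3)$, which sits in an extension of the component group $E(\bbQ_3)/E_0(\bbQ_3)$ by the group of smooth $\bbF_3$-points of the reduction. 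For additive reduction the latter is $(\bbF_3,+)\cong\bbZ/3$, and Tate's algorithm pins down the component group; combining these gives $E(\bbQ_3)/3\cong(\bbZ/3)^2$ provided the $3$-torsion contributions line up as expected. An alternative, and perhaps cleaner, route is to use the formula $\#\big(E(\bbQ_3)/3E(\bbQ_3)\big)=\#E(\bbQ_3)[3]\cdot |3|_3^{-1}=\#E(\bbQ_3)[3]\cdot 3$, so it suffices to show $E(\bbQ_3)$ contains a point of order $3$ — equivalently that the $3$-division polynomial of $y^2=x^3+54$ has a root in $\bbQ_3$ — which it does: $x=0$ gives the $3$-torsion point $(0,\sqrt{54})$, and $54=2\cdot 27$ is a square in $\bbQ_3$ since $2\in(\bbZ_3^\times)^2$.

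Next I would check that $P=(3,9)$ indeed lies on $E$: $3^3+54=27+54=81=9^2$, so $y=9$ works and $P\in E(\bbQ_3)$. Similarly for $Q=(4,\sqrt{2\cdot 59})$: $4^3+54=64+54=118=2\cdot 59$, and $2\cdot 59\equiv 118\equiv 1\pmod 3$ is a unit; one checks $118\in(\bbZ_3^\times)^2$ by Hensel (it suffices that $118\equiv 1\pmod 3$, and any unit $\equiv 1\pmod 3$ is a square in $\bbZ_3^\times$ since $1+3\bbZ_3\subset(\bbZ_3^\times)^2$... wait, one needs $1+3\bbZ_3=(1+3\bbZ_3)$ and squares in $\bbZ_3^\times$ are exactly $\{u: u\equiv 1\pmod 3\}$; $118\equiv 1\pmod 3$, so yes). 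So $Q\in E(\bbQ_3)$.

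Finally I would show $P$ and $Q$ generate $E(\bbQ_3)/3$, i.e.\ that their classes are nonzero and independent in $(\bbZ/3)^2$. The reduction map sends $P\mapsto\bar P$ and $Q\mapsto\bar Q$ on the (smooth locus of the) reduction $\bar E:\ y^2=x^3$ over $\bbF_3$, whose smooth points form a group isomorphic to $\bbG_a(\bbF_3)\cong\bbZ/3$; since $\bar P=(0,0)$ is the singular point... hmm, $3\equiv 0$, so $\bar P$ reduces to $(0,0)$ which is the cusp, meaning $P\in E_0(\bbQ_3)$ fails — actually $P$ reduces to the singular point so $P\notin E_0(\bbQ_3)$, placing $P$ in a nontrivial component, while $Q$ reduces to $(1,\pm 1)$, a smooth point. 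This shows $P$ and $Q$ land in complementary pieces of the filtration $E(\bbQ_3)\supset E_0(\bbQ_3)\supset E_1(\bbQ_3)$: the class of $P$ generates $E(\bbQ_3)/E_0(\bbQ_3)$-part and the class of $Q$ generates the $E_0(\bbQ_3)/E_1(\bbQ_3)\cong\bbZ/3$-part, hence together they generate $E(\bbQ_3)/E_1(\bbQ_3)\cong E(\bbQ_3)/3\cong(\bbZ/3)^2$. Writing this cleanly requires identifying the component group of $E^{(3)}$ at $3$ and checking $\#(E(\bbQ_3)/E_0(\bbQ_3))=3$ as well; \textbf{the main obstacle} is exactly this bookkeeping at the small prime $p=3$ — making sure the formal-group $3$-divisibility starts at the right level, that the component group has order $3$ (not $1$ or $2$), and that $P$, $Q$ genuinely hit independent generators rather than, say, both landing in $E_0(\bbQ_3)$ or one being $3$ times the other modulo $E_1$. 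A safe fallback, if the filtration argument gets delicate, is to compute $\chi_P,\chi_Q\in H^1(\bbQ_3,E_3)$ explicitly via the $3$-descent map and check their images are linearly independent over $\bbF_3$ using the Weil pairing, which also directly feeds into Proposition \ref{prop:3}.
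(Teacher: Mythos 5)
Your overall strategy — Tate's algorithm, the filtration $E(\bbQ_3)\supset E_0(\bbQ_3)\supset E_1(\bbQ_3)$, and locating $P$ in a nontrivial component while $Q$ generates $E_0(\bbQ_3)/E_1(\bbQ_3)$ — is the same as the paper's, but two steps in your write-up are genuinely wrong or missing. First, $E_1(\bbQ_3)$ is \emph{not} $3$-divisible: the formal logarithm gives $E_1(\bbQ_3)\cong 3\bbZ_3$ as a topological group (the bound $n>1/(p-1)$ is indeed satisfied at $n=1$), and $\bbZ_3$ is not $3$-divisible at the residue characteristic — $E_1(\bbQ_3)/3E_1(\bbQ_3)\cong\bbZ/3$. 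The $3$-divisibility arguments you are echoing from the preceding theorem took place at the prime $2$, where $3$ is a unit. What actually saves the containment $E_1(\bbQ_3)\subseteq 3E(\bbQ_3)$ is the paper's appeal to Pannekoek's theorem that $E_0(\bbQ_3)\cong\bbZ_3$ for additive reduction: then $E_1(\bbQ_3)$ is the unique index-$3$ subgroup $3E_0(\bbQ_3)$. You need this input (or an equivalent check that $E_0(\bbQ_3)$ has no $3$-torsion); your stated reason fails.

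Second, the step you yourself flag as ``the main obstacle'' — ruling out $E(\bbQ_3)/3\cong\bbZ/3$, i.e.\ showing $3E(\bbQ_3)=E_1(\bbQ_3)$ rather than $3E(\bbQ_3)=E_0(\bbQ_3)$ — is left open, and your proposed shortcut does not work as written: $(0,\sqrt{54})$ is not a $\bbQ_3$-point because $v_3(54)=3$ is odd, so $54$ is not a square in $\bbQ_3$ (the $x=0$ torsion points are $(0,\pm 3\sqrt{6})$, defined only over $\bbQ_3(\zeta_3)$). The formula $\#(E(\bbQ_3)/3E(\bbQ_3))=3\cdot\#E(\bbQ_3)[3]$ is fine, but you would need the correct rational $3$-torsion point, namely $(-6,9\sqrt{-2})$ with $\sqrt{-2}\in\bbZ_3$ since $-2\equiv 1\pmod 3$. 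The paper instead closes this gap by the direct computation $3P=(3^{-2}\cdot 19,\,-3^{-3}\cdot 5\cdot 43)\in E_1(\bbQ_3)$, which forces $3E(\bbQ_3)=3E_0(\bbQ_3)+\bbZ(3P)=E_1(\bbQ_3)$ and hence $E(\bbQ_3)/3=E(\bbQ_3)/E_1(\bbQ_3)\cong(\bbZ/3)^2$. Either route is fine, but one of them must actually be carried out; as it stands your argument establishes only that $E(\bbQ_3)/3$ is a quotient of a group of order $9$ generated by $P$ and $Q$.
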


\begin{proof}
Denote by $E_0(\bbQ_3)$ the $\bbQ_3$-points of $E$ that reduce to smooth points on the reduction of $E$ modulo $3$. Denote by $E_1(\bbQ_3)$ the kernel of reduction.
The elliptic curve $E/\bbQ_3$ has additive reduction and hence 
\begin{equation}
E_0(\bbQ_3)/E_1(\bbQ_3)\cong\bbF_3.
\end{equation}
Applying Tate's algorithm (or see the Tamagawa number for $3$ given in \cite{LMFDBj=0}), we find that 
\begin{equation}
\label{eq:c=3}
E(\bbQ_3)/E_0(\bbQ_3)\cong\bbZ/3.
\end{equation} 
 By Theorem 1 of \cite{Rene}, 
$E_0(\bbQ_3)\cong \bbZ_3$. The following sequence is exact.
\[
\xymatrix{
0\ar[r] & \frac{E_0(\bbQ_3)}{3E(\bbQ_3)}\ar[r]  & \frac{E(\bbQ_3)}{3E(\bbQ_3)}\ar[r] & \frac{E(\bbQ_3)}{E_0(\bbQ_3)}\ar[r] & 0.
}
\]
 Since $E_0(\bbQ_3)\cong\bbZ_3$ and $E_0(\bbQ_3)/E_1(\bbQ_3)\cong\bbF_3$, we have $3E_0(\bbQ_3)=E_1(\bbQ_3)$. By \eqref{eq:c=3}, $E(\bbQ_3)/E_0(\bbQ_3)\cong\bbZ/3$. A suitable generator is $P=(3,9)$. A calculation shows that \mbox{$3P=(3^{-2}\cdot 19, -3^{-3}\cdot 5\cdot 43)\in E_1(\bbQ_3)$}. Therefore, $3E(\bbQ_3)=E_1(\bbQ_3)$. The point $Q$ generates $E_0(\bbQ_3)/E_1(\bbQ_3)$.
\end{proof}

In light of Lemma \ref{lem:sufficient}, we will study the action of $\theta$ on the image of $E(\bbQ_3)$ in $H^1(\bbQ_3, E_3)$. We have 
\begin{equation*}
 E_3=\{O_E, (0,3\sqrt{6}),  (0,-3\sqrt{6})\}\cup\bigcup_{0\leq k\leq 2} \{(-6\zeta_3^k,9\sqrt{-2}), (-6\zeta_3^k,-9\sqrt{-2}) \}
\end{equation*}
and therefore
$$\mathbb{Q}_3(E_3) = \mathbb{Q}_3(\zeta_3,\sqrt{6},\sqrt{-2})= \mathbb{Q}_3(\zeta_3).$$

Let $F=\bbQ_3(E_3)=\bbQ_3(\zeta_3)$. The inflation-restriction exact sequence gives
\begin{eqnarray*}
H^1(\Gal(F/\bbQ_3), E_3)\rightarrow H^1(\bbQ_3,E_3)\rightarrow 
H^1(F,E_3)^{\Gal(F/\bbQ_3)}\rightarrow H^2(\Gal(F/\bbQ_3), E_3).
\end{eqnarray*}
 Since $[F:\bbQ_3]=2$, we have $H^1(\Gal(F/\bbQ_3), E_3)=H^2(\Gal(F/\bbQ_3), E_3)=0$. Therefore, the restriction map gives an isomorphism 
$$H^1(\bbQ_3,E_3)\rightarrow 
H^1(F,E_3)^{\Gal(F/\bbQ_3)}.$$
Let $T\in E(\bbQ_3)$. In a slight abuse of notation, we continue to write $\chi_T$ for the image of $T$ in \mbox{$H^1(F,E_3)=\Hom_{\mathrm{cts}}(\Gamma_F,E_3)$}. In order to study the action of $\theta$ on $\chi_T(\Gamma_F)\subset E_3$, we will use the following polynomials. Let $f_T\in\bbQ_3[t]$ be the degree $9$ polynomial satisfied by the $x$-coordinates of the points $R\in E(\overline{\bbQ_3})$ such that $3R=T$. By Exercise 3.7 of \cite{Silverman1}, 
\begin{equation}
\label{eq:f}
f_{T}(t)=3^2t^2(t-x(T))(t^3+2^3\cdot3^3)^2-2^3(t^3+2\cdot3^3)(t^6+2^3\cdot3^3\cdot5t^3-2^5\cdot3^6).
\end{equation}
Let $g_T\in\bbQ_3(\zeta_3)[t]$ be the cubic polynomial satisfied by the $x$-coordinates of the points \mbox{$S\in E(\overline{\bbQ_3})$} such that $(1-\zeta_3)S=T$. The addition formula shows that 
\begin{equation}
\label{eq:g}
g_T(t)=t^3+3\zeta_3x(T)t^2+2^3\cdot 3^3.
\end{equation}

 Combining Lemma \ref{lem:sufficient} with Proposition \ref{prop:3} below completes the proof of Theorem \ref{obstructionthm}.

\begin{proposition}
\label{prop:3}
Let $P=(3,9)\in E(\bbQ_3)$. Then $\theta^*(\chi_P)$ is not in the image of $E(\bbQ_3)$ inside $H^1(\bbQ_3,E_3)$. 
\end{proposition}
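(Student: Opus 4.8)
The plan is to work entirely inside the Galois group $\Gamma_F$ where $F=\bbQ_3(\zeta_3)$, using the isomorphism $H^1(\bbQ_3,E_3)\xrightarrow{\sim}H^1(F,E_3)^{\Gal(F/\bbQ_3)}$ already established, and to exhibit a concrete element of $\Gamma_F$ on which $\theta^*(\chi_P)$ takes a value that no cocycle coming from $E(\bbQ_3)$ can take. Concretely, I would first use Lemma \ref{lem:generators}: since $E(\bbQ_3)/3\cong(\bbZ/3)^2$ is generated by the images of $P=(3,9)$ and $Q=(4,\sqrt{2\cdot 59})$, the image of $E(\bbQ_3)$ in $H^1(F,E_3)=\Hom_{\mathrm{cts}}(\Gamma_F,E_3)$ is the rank-$2$ subspace spanned by $\chi_P$ and $\chi_Q$. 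So the statement to prove is that $\theta^*(\chi_P)\notin\langle\chi_P,\chi_Q\rangle$. Since $\theta$ acts $\bbQ_3$-linearly and $\theta^*$ preserves the Weil pairing up to the fixed scalar coming from $\det$, the span $\langle\chi_P,\chi_Q\rangle$ is a maximal isotropic subspace and $\theta^*$ preserves the symplectic form; hence if $\theta^*(\chi_P)$ lay in $\langle\chi_P,\chi_Q\rangle$ then $\theta^*$ would preserve that Lagrangian, and I would derive a contradiction from the explicit description of $\theta$ together with the ramification/splitting behaviour of $F(E[3^\infty$-stuff$])/F$.

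The technical heart is to compute $\chi_P(\sigma)$ and $\chi_Q(\sigma)$ for a well-chosen $\sigma\in\Gamma_F$. Here the polynomials $f_T$ and $g_T$ from \eqref{eq:f} and \eqref{eq:g} do the work: $\chi_P(\sigma)=\sigma(R)-R$ where $3R=P$, and the $x$-coordinate of $R$ is a root of $f_P$; similarly $\chi_Q(\sigma)$ is controlled by $f_Q$. I would factor $f_P$ and $f_Q$ over $F=\bbQ_3(\zeta_3)$ far enough to identify the splitting field $F(R)$ — in particular its ramification over $F$ — and likewise for the division-by-$(1-\zeta_3)$ tower governed by $g_P$, whose discriminant $-27\zeta_3^2 x(P)^2\cdot(\text{stuff})+\ldots$ I can read off directly. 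Since $x(P)=3$, the polynomial $g_P(t)=t^3+9\zeta_3 t^2+2^3\cdot 3^3$ has all non-leading coefficients divisible by $3$, so by an Eisenstein-type analysis at the prime above $3$ in $F$ it is totally (tamely? wildly?) ramified, and I would pin down exactly which $3$-torsion point the Frobenius-free inertia generator $\sigma$ sends $P$'s cube-root to. The point of choosing $P=(3,9)$ with $x(P)=3$ is precisely that this forces the relevant extension to be ramified in a way that $Q$'s extension is not (one checks $x(Q)=4$ is a $3$-adic unit, so $f_Q$, $g_Q$ behave differently), so $\chi_P(\sigma)$ and $\chi_Q(\sigma)$ land in distinguishable lines of $E_3$.

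With $\sigma$ in hand I would compute: $\chi_P(\sigma)$ is (up to sign and $\Gal(F/\bbQ_3)$-conjugacy) one of the points $(-6\zeta_3^k,\pm 9\sqrt{-2})$, whereas $\theta$, which is complex conjugation on $E_3$ — i.e. it fixes $\bbQ_3$-rational $3$-torsion data and acts on $\zeta_3$ and on $\sqrt{6},\sqrt{-2}$ via the nontrivial element of $\Gal(F/\bbQ_3)$ composed with the CM — sends $\chi_P(\sigma)$ to a point whose $x$-coordinate involves $\zeta_3^{-k}$ or whose $y$-coordinate sign is flipped, so that $\theta^*(\chi_P)(\sigma)\notin\{a\chi_P(\sigma)+b\chi_Q(\sigma):a,b\in\bbZ/3\}$. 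That inequality is the contradiction. The main obstacle I anticipate is the explicit $3$-adic ramification computation: showing that $f_P$ (a degree-$9$ polynomial) and $g_P$ genuinely produce a ramified extension of $F$ of the expected type, and keeping careful track of how $\theta$ — given only abstractly as "the image of $\tau\in\Gamma_\bbQ\setminus\Gamma_{\bbQ(\zeta_3)}$" acting by conjugation, hence as complex conjugation on $E_3$ — permutes the nine cube-roots. Everything else (linear algebra over $\bbZ/3$, the maximal-isotropic argument, Lemma \ref{lem:sufficient}) is formal once those local computations are secured.
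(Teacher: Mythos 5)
Your setup coincides with the paper's: reduce to $\Hom_{\mathrm{cts}}(\Gamma_F,E_3)^{\Gal(F/\bbQ_3)}$ with $F=\bbQ_3(\zeta_3)$, take the generators $P,Q$ of $E(\bbQ_3)/3$ from Lemma \ref{lem:generators}, and exploit the fact that $g_P$ cuts out a ramified cubic extension of $F$ while $g_Q$ cuts out an unramified one (your observation that all non-leading coefficients of $g_P$ are divisible by $3$ while $x(Q)=4$ is a unit is exactly the right point). But your endgame has a genuine gap. You propose to conclude by producing a single $\sigma\in\Gamma_F$ with $\theta^*(\chi_P)(\sigma)\notin\{a\chi_P(\sigma)+b\chi_Q(\sigma): a,b\in\bbZ/3\}$. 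For the natural choice of $\sigma$ (one fixing the $(1-\zeta_3^2)$-division point of $P$ while moving that of $Q$), one gets $\chi_P(\sigma)\in E_{(1-\zeta_3)}\setminus\{O_E\}$ and $\chi_Q(\sigma)\notin E_{(1-\zeta_3)}$; these two values are then linearly independent, so $\{a\chi_P(\sigma)+b\chi_Q(\sigma)\}$ is all of $E_3$ and no single evaluation can exclude every pair $(a,b)$. (Note also that for this $\sigma$ the value $\chi_P(\sigma)$ lies in $E_{(1-\zeta_3)}=\{O_E,(0,\pm 3\sqrt{6})\}$, not among the points $(-6\zeta_3^k,\pm 9\sqrt{-2})$ as you assert.) Your fallback ``Lagrangian-preservation'' argument is likewise unjustified: knowing only that $\theta^*(\chi_P)$ lies in $\langle\chi_P,\chi_Q\rangle$ gives no control on $\theta^*(\chi_Q)$.

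What is missing is the two-stage structure of the actual argument. Evaluating at the $\sigma$ above and using that $\theta$ acts as multiplication by $-1$ on the subgroup $E_{(1-\zeta_3)}$ forces $b=0$ and $a=-1$; this leaves exactly one surviving candidate, namely $\theta^*(\chi_P)=-\chi_P$, which must be eliminated by a separate input. That input is the irreducibility over $F$ of the degree-$9$ polynomial $f_P$ (a machine check), which together with the irreducibility of $g_P$ shows that $\chi_P(\Gamma_F)=E_3$; one then observes that $\theta$ is not multiplication by $-1$ on all of $E_3$, since $\theta(-6\zeta_3,9\sqrt{-2})\neq -(-6\zeta_3,9\sqrt{-2})$. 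You cite $f_P$ as a tool but never use its irreducibility for this purpose, and you do not isolate the case $a=-1$, $b=0$ as the one that survives the local evaluation; without these two steps the proof does not close.
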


\begin{proof}
We have $\bbQ_3(E_3)=\bbQ_3(\zeta_3)$.
By Lemma \ref{lem:generators}, $E(\bbQ_3)/3$ is generated by $P=(3,9)$ and $Q=(4,\sqrt{2\cdot 59})$. A calculation using MAGMA \cite{Magma} shows that the degree $9$ polynomial $f_P$ given by \eqref{eq:f} is irreducible over $\bbQ_3$ and therefore also irreducible over $\bbQ_3(\zeta_3)$. 
By \eqref{eq:g}, we have
\begin{eqnarray*}g_P(t)=t^3+3^2\zeta_3t^2+2^3 \cdot 3^3 & \textrm{ and } & g_Q(t)=t^3+2^2 3\zeta_3t^2+2^3 \cdot 3^3.\end{eqnarray*}
Making a change of variables $t=3u$, we see that $g_Q(t)$ defines the same extension of $\bbQ(\zeta_3)$ as $h_Q(u)=u^3+2^2\zeta_3u^2+2^3$. Now $h_Q(u)\equiv u^3+u^2-1\pmod{(1-\zeta_3)}$, which is irreducible over the residue field $\bbF_3$ of $\bbQ_3(\zeta_3)$. Thus, $g_Q(t)$ defines an unramified extension of $\bbQ_3(\zeta_3)$. On the other hand, we claim that $g_P(t)$ defines a ramified extension of $\bbQ_3(\zeta_3)$.
Making a change of variables $t=3(u+1)$, we see that $g_P(t)$ defines the same extension of $\bbQ(\zeta_3)$ as
\mbox{$h_P(u)=u^3+3(1+\zeta_3)u^2+3(1+2\zeta_3)u+3\zeta_3 +3^2$}. Let $\pi=(1-\zeta_3)$. Examining the $\pi$-adic valuation of the terms in $h_P(u)$, we see that any root of $h_P(u)$ has $\pi$-adic valuation $2/3$. Therefore, $g_P(t)$ defines a ramified extension of $\bbQ_3(\zeta_3)$, as claimed.

Let $R_P, R_Q\in E(\overline{\bbQ_3})$ be such that $3R_P=P$ and $3R_Q=Q$. 
Let \mbox{$S_P=(1-\zeta_3^2)R_P$} and let $S_Q=(1-\zeta_3^2)R_Q$. Recall that $\bbQ_3(\zeta_3, x(R_P))$ is the degree $9$ extension of $\bbQ_3(\zeta_3)$ defined by $f_P$. Since $P$ is not a $2$-torsion point, $\bbQ_3(\zeta_3, x(R_P))=\bbQ_3(\zeta_3, R_P)$. Likewise, $g_P$ defines the ramified cubic extension $\bbQ_3(\zeta_3, S_P)/\bbQ_3(\zeta_3)$ and $g_Q$ defines the unramified cubic extension $\bbQ_3(\zeta_3, S_Q)/\bbQ_3(\zeta_3)$. 
Therefore, there exists $\sigma\in\Gamma_{\bbQ_3(\zeta_3)}$ such that $\sigma(S_Q)\neq S_Q$, $\sigma(S_P)=S_P$ and $\sigma(R_P)\neq R_P$. We have
\[(1-\zeta_3^2)\chi_P(\sigma)=(1-\zeta_3^2)(\sigma(R_P)-R_P)=\sigma(S_P)-S_P= 0\]
and
\[(1-\zeta_3^2)\chi_Q(\sigma)=(1-\zeta_3^2)(\sigma(R_Q)-R_Q)=\sigma(S_Q)-S_Q\neq 0.\]
Thus, $\chi_Q(\sigma)\notin E_{(1-\zeta_3)}$ and $\chi_P(\sigma)\in E_{(1-\zeta_3)}\setminus \{O_E\}$. Suppose for contradiction that $\theta^*(\chi_P)$ is in the image of $E(\bbQ_3)$ inside $H^1(\bbQ_3,E_3)$, so that
\begin{equation}
\label{eq:linearcombo}
\theta^*(\chi_P)=\chi_{(aP+bQ)}=a\chi_P+b\chi_Q
\end{equation}
for $a,b\in\bbF_3$. Note that  $\theta$ acts as multiplication by $-1$ on $E_{(1-\zeta_3)}=\{O_E, (0,3\sqrt{6}),  (0,-3\sqrt{6})\}$, so 
\begin{equation}
-\chi_P(\sigma)=\theta^*(\chi_P)(\sigma)=a\chi_P(\sigma)+b\chi_Q(\sigma)
\end{equation}
 which implies that $b\chi_Q(\sigma)\in E_{(1-\zeta_3)}$ and hence $b=0$ and $a=-1$. Since $g_P$ is irreducible over $\bbQ_3(\zeta_3)$, there exists $\rho\in \Gamma_{\bbQ(\zeta_3)}$ such that $\rho(S_P)\neq S_P$. For such $\rho$ we have 
\[(1-\zeta_3^2)\chi_P(\rho)=(1-\zeta_3^2)(\rho(R_P)-R_P)=\rho(S_P)-S_P\neq 0\]
and hence $\chi_P(\rho)\notin E_{(1-\zeta_3)}$. Therefore, $\chi_P(\Gamma_{\bbQ(\zeta_3)})=E_3$. In particular, $T=(-6\zeta_3,9\sqrt{-2})$ is in the image of $\chi_P$. But $\theta(T)\neq -T$, which contradicts $\theta^*(\chi_P)=-\chi_P$.
\end{proof}

\section*{Acknowledgements} I am very grateful to Alexei Skorobogatov for suggesting this problem, for several enlightening discussions and for pointing out a mistake in an earlier version of Theorem \ref{thm:Brauer-Maninset}. I would like to thank Dennis Eriksson, Paul Ziegler, Martin Bright, Spiros Adams-Florou, Jack Thorne and David Holmes for their enthusiasm and for some useful discussions. I am grateful to Peter Stevenhagen for his input which led towards the current formulation of Theorem \ref{LinK}. I would like to thank Tim Dokchitser and Srilakshmi Krishnamoorthy for some helpful comments on an earlier draft of this paper. Heartfelt thanks are due to the anonymous referee for many helpful and detailed comments which greatly improved the exposition. Most of this work was done during my stay at the Max Planck Institute for Mathematics in Bonn. I am grateful to the Max Planck Institute for financial support and for providing a very stimulating working environment.

\appendix

\section{}

This appendix contains a corrigendum to the main article above, said article having been published in \emph{J.~Lond.~Math.~Soc.} 93 (2016), 397--419.
In this corrigendum, we point out a mistake affecting the statements of Theorems~\ref{thm:Brauer-Maninset},~\ref{thm5.2}, \ref{obstructionthm} and Proposition~\ref{prop:cupproduct} and provide a correction.


\subsection{Errata}
The main article contains the following mistakes:
\begin{enumerate}
	\item\label{error} The statement of Theorem~\ref{thm:Brauer-Maninset} is incorrect: it does not hold as stated for all $\cA\in \Br(X)_3\setminus \Br(\bbQ)$. It holds for a particular choice of $\cA\in\Br(X)_3\setminus \Br(\bbQ)$, whose construction is explained in Section~\ref{sec:correct} below.
	The same is true of Proposition~\ref{prop:cupproduct}, Theorem~\ref{thm5.2} and Theorem~\ref{obstructionthm}. 
	 \item The proof of Lemma~\ref{lem:sufficient} uses the erroneous Proposition~\ref{prop:cupproduct}. 
	 While the statement of Lemma~\ref{lem:sufficient} does not technically need modification (see Remark~\ref{rem:5.4} below), it is cleaner to consider the whole of Section~\ref{obstruction} as pertaining to the particular element $\cA\in\Br(X)_3\setminus \Br(\bbQ)$ whose construction is explained in Section~\ref{sec:correct} of this corrigendum.
	 \item Typo: on p.5, in the sentence immediately preceding Definition~\ref{def:m}, `idele $(\dots , 1, 1, \pi_\mathfrak{q}, 1, 1, \dots) \in \mathbb{A}^\times_{KL}$' should read ` id\`{e}le $(1,\dots , 1, 1, \pi_\mathfrak{q}, 1, 1, \dots) \in \mathbb{A}^\times_{KL}$'.
	\item\label{typo} Typo: on p.6, in the proof of Proposition~\ref{upper bound}, the displayed equation `$\sigma(\alpha)\equiv \alpha^{N_{F/\bbQ}(\mathfrak{r})}\pmod{\mathfrak{s}}$' should read `$\sigma(\alpha)\equiv \alpha^{N_{N/\bbQ}(\mathfrak{r})}\pmod{\mathfrak{s}}$'. 
	\item Typo: on p.7, in Equation~\eqref{degreeformula}, `$h(O_K)$' should be replaced by `$h(\cO_K)$'.
	\item On p.8, below the fourth displayed equation, `In other words, for a prime $\mathfrak{q}$' should be replaced by `In other words, for some prime $\mathfrak{q}$'.
	\item Typo: all instances of $\theta^*$ in Section~\ref{obstruction} should be replaced by $\theta_*$.
	 \item On p.18, in the proof of Theorem~\ref{thm5.2}, the sentence `We can represent $R$ by $(x_0, t_0, u_0)$ satisfying~\eqref{eq:Kummer}' should be replaced by `Since the evaluation map $\ev_{\mathcal{A},2} : X (\bbQ_2) \to \Br(\bbQ_2)_3$ is locally constant, it is enough to show that it is zero on all $\bbQ_2$-points $R=(x_0, t_0, u_0)$ satisfying~\eqref{eq:Kummer}'.
\end{enumerate}
	
The typos are inconsequential but we mention them here for completeness.

\subsection{Corrections}\label{sec:correct}

Here we correct the significant mistake~\eqref{error} that necessitates this corrigendum.

In the statement of Theorem~\ref{thm:Brauer-Maninset}, the first sentence should read `Let $\cA\in\Br(X)_3\setminus \Br(\bbQ)$ be the element constructed in Section~\ref{obstruction}.'.

In Section~\ref{obstruction}, the sentence `Let $\cA\in\Br(X)\setminus\Br(\bbQ)$ be a corresponding generator of $\Br(X)/\Br(\bbQ)$'~should be replaced by the following paragraph:

`Following~\cite[\S5.1]{I-S},~\cite[\S3]{SZtorsion}, we construct an element $\cA\in\Br(X)_3$ from $\theta$. 
Multiplication by $3$ on $E^c$ turns $E^c$ into an $E^c$-torsor with structure group $E^c_3$. Denote this torsor by $\cT$ and let $[\cT]$ denote its class in $\mathrm{H}^1_{\et}(E^c, E^c_3)$. 
The automorphism $\theta\in\Aut_{\Gamma_{\bbQ}}(E^c_3)$ gives rise to the $E^c$-torsor $\theta_*\cT$ with structure group $E^c_3$. 
Composing the cup-product map with the Weil pairing $E^c_3\times E^c_3\to \mu_3$ yields a pairing
\begin{equation}\label{eq:torsorpair}
\mathrm{H}^1_{\et}(E^c\times E^c, E^c_3)\times\mathrm{H}^1_{\et}(E^c\times E^c, E^c_3)\to \Br(E^c\times E^c)_3.
\end{equation}
Let $p_1, p_2:E^c\times E^c\to E^c$ be the natural projection maps onto the first and second copies of $E^c$, respectively. The pullbacks $p_1^*\cT$ and $p_2^*\theta_*\cT$ are $E^c\times E^c$-torsors with structure group $E^c_3$; let $\cB\in \Br(E^c\times E^c)_3$ denote the pairing of their classes in $\mathrm{H}^1_{\et}(E^c\times E^c, E^c_3)$ via~\eqref{eq:torsorpair}. 
By the constructions of~\cite[Section~3 and Proposition~3.3]{SZtorsion}, the canonical map
\[\Br(E^c\times E^c)_3\to\End_{\Gamma_\bbQ}(E^c_3)/(\bbZ/3) \]
sends $\cB$ to the image of $\theta$.
Let $\iota$ denote the involution on $\Br(E^c\times E^c)$ induced by $(P,Q)\mapsto (-P,-Q)$ on $E^c\times E^c$. The proof of~\cite[Theorem~2.4]{SZtorsion}, in particular diagram~(16), identifies $\Br(X)$ with the subgroup of $\Br(E^c\times E^c)$ consisting of elements fixed by $\iota$. By the functoriality and bilinearity of the cup product, we find that $\iota(\cB)=\cB$. Let $\cA$ be the element of $\Br(X)$ corresponding to $\cB$'.

\bigskip

\begin{remark}\label{rem:5.4}
In the setting of Section~\ref{obstruction}, by Theorem~\ref{oddtor} we know that  $\Br(X)$ is generated by the element $\cA$ whose construction is described above and the image of $\Br(\bbQ)$. It follows from the corresponding results for $\cA$ that the statement of Lemma~\ref{lem:sufficient} holds for any element of $\Br(X)_3 \setminus \Br(\bbQ)$ and, moreover, that the following holds.

\begin{proposition}\label{prop:5.4}
Let $\cC \in \Br(X)_3 \setminus \Br(\bbQ)$. Let $v$ be a rational place. Then the evaluation map
$\ev_{\cC,v}: X(\bbQ_v) \to \frac{1}{3} \bbZ/\bbZ$ is surjective if $v = 3$ and constant otherwise.
\end{proposition}

\end{remark}

\begin{ack}
Rachel Newton was supported by UKRI Future Leaders Fellowship MR/T041609/1 and MR/T041609/2. Many thanks are due to Evis Ieronymou and the anonymous referee for their careful reading and useful comments which have improved this corrigendum.
\end{ack}


\end{document}